\def\th@plain{%
	\thm@notefont{}
	\itshape 
}
\def\th@definition{%
	\thm@notefont{}
	\normalfont 
}
\pgfplotsset{compat=newest}
\pgfplotsset{plot coordinates/math parser=false}
\newtheorem{theorem}{Theorem}[section]
\newtheorem{corollary}{Corollary}
\newtheorem{lemma}[theorem]{Lemma}
\theoremstyle{definition}
\newtheorem{definition}[theorem]{Definition}
\newtheorem{remark}{Remark}
\newtheorem{assumption}[theorem]{Assumption}
\DeclareMathOperator{\supp}{supp}
\newcommand{\Lin}{\ensuremath {{\mathcal{L}}} }
\newcommand{\BV}{\ensuremath {BV(\Omega)} }
\newcommand{\M}{\ensuremath {\mathcal{M}(\Omega)} }
\newcommand{\Lone}{\ensuremath {L^1(\Omega)} }  
\newcommand{\LL}{\ensuremath {L^2(\Omega)} }  
\newcommand{\Linf}{\ensuremath {L^\infty(\Omega)} }
\newcommand{\Hoz}{\ensuremath {H_0^1(\Omega)} } 
\newcommand{\Htwo}{\ensuremath {H^2(\Omega)} } 
\newcommand{\Woneinf}{\ensuremath {W^{1,\infty}(\Omega)} } 
\newcommand{\Norm}[2]{\ensuremath{\left\|#1\right\|_{#2}}}
\newcommand{\norm}[2]{\ensuremath{\lVert #1\rVert_{#2}}}
\newcommand{\Abs}[1]{\ensuremath{\left| #1\right|}}
\newcommand{\abs}[1]{\ensuremath{| #1 |}}
\newcommand{\N}{\mathbb{N}}
\newcommand{\R}{\mathbb{R}}
\newcommand{\cala}{\ensuremath{{\mathfrak{a}}}}
\title[Error est. for control by BV functions]{Finite element error estimates for one-dimensional elliptic optimal control by BV functions}
\author[Dominik Hafemeyer, Florian Mannel, Ira Neitzel and Boris Vexler]{}
\subjclass{26A45, 49J20, 49M25, 65N15, 65N30.}
 \keywords{Optimal control, BV functions, Optimality conditions, Numerical analysis, Finite elements}
 \email{dominik.hafemeyer@tum.de}
 \email{florian.mannel@uni-graz.at}
 \email{neitzel@ins.uni-bonn.de}
 \email{vexler@ma.tum.de}
\thanks{The first author is supported by the Studienstiftung des deutschen Volkes.}
\thanks{$^*$ Corresponding author: Dominik Hafemeyer}
\begin{document}

\centerline{\scshape Dominik Hafemeyer$^*$}
\medskip
{\footnotesize
\centerline{Department of Mathematics, Technische Universität München,}
\centerline{ Boltzmannstr.~3, 85747 Garching b. München, Germany}
}
\medskip

\centerline{\scshape Florian Mannel}
\medskip
{\footnotesize
\centerline{Institute of Mathematics and Scientific Computing, University of Graz,}
\centerline{Heinrichstr. 36, 8010 Graz, Austria}
}
\medskip

\centerline{\scshape Ira Neitzel}
\medskip
{\footnotesize
\centerline{Institute for Numerical Simulation, Universität Bonn,}
\centerline{Endenicher Allee 19b, 53115 Bonn, Germany}
}
\medskip

\centerline{\scshape Boris Vexler}
\medskip
{\footnotesize
\centerline{Department of Mathematics, Technische Universität München,}
\centerline{ Boltzmannstr.~3, 85747 Garching b. München, Germany}
}

\bigskip

 \centerline{(Communicated by the associate editor name)}

\bigskip

\begin{abstract}
	We consider an optimal control problem governed by a one-di\-men\-sio\-nal 
	elliptic equation that involves univariate functions
	of bounded variation as controls. For the discretization of the state 
	equation we use linear finite elements and for the control discretization 
	we analyze two strategies. First, we use variational discretization of 
	the control and show that the $L^2$- and $L^\infty$-error for the state 
	and the adjoint state are of order ${\mathcal O}(h^2)$ and that the 
	$L^1$-error of the control behaves like ${\mathcal O}(h^2)$, too. 
	These results rely on a structural assumption that
	implies that the optimal control of the original problem is piecewise 
	constant and that the adjoint state has nonvanishing first derivative at the jump 
	points of the control. 
	If, second, piecewise constant control discretization is used, we obtain 
	$L^2$-error estimates of order $\mathcal{O}(h)$ for the state and $W^{1,\infty}$-error estimates of order $\mathcal{O}(h)$ for the adjoint state.
	Under the same structural assumption as before we derive an $L^1$-error estimate of order $\mathcal{O}(h)$ for the control.
	We discuss optimization algorithms and provide numerical results for both discretization schemes indicating that the error estimates are optimal.
\end{abstract}

\maketitle


\section{Introduction}\label{sec_intro}

In this paper we derive a priori error estimates for two finite element discretizations of the optimal control problem governed by a one-dimensional elliptic equation
\begin{equation*}
\min_{(u,q)} \, \frac12\Norm{u-u_d}{\LL}^2 +
\alpha\Norm{q'}{\M}
\qquad\text{s.t. }\qquad
A u = q.
\end{equation*}
Here, $u\in V:= H^1_0(\Omega)$ is the state and $q\in Q := \BV$ is the control, where $\BV$ denotes the space of functions of bounded variation (BV) on the interval $\Omega:=(0,1)$. The operator $A$ is elliptic and $\alpha$ is a positive real number. 
The two finite element schemes that will be analyzed are identical in regard to the discretization of state and adjoint state, but they differ in the treatment of the control. In the \emph{variational discretization} the control is not discretized, while in the second scheme the control is discretized by piecewise constant functions.

The significance of the above control problem is given by the use of the BV-seminorm $\|q'\|_{\mathcal{M}(\Omega)}$ in the objective. 
This favors piecewise constant controls with only a limited number of jumps, which makes this problem type interesting in many practical applications.  
The precise functional analytic setting will be provided in the next section.

Optimal control problems with BV-controls defined in one space dimension are strongly related to control problems with measures as controls.
Both BV optimal control problems and optimal control problems with measures have attracted significant research interest in the recent past, see, e.g. \cite{ClasonKunisch2010,CK2018,CasasKogutLeugering,Florian2017,Florian2018,Bredies2019} for the former and \cite{CCK2012,CCK2013,CK2014,CK2016,HQ2019,KPV2014} for the latter.

Error estimates for PDE-constrained optimal control problems involving measures have been presented in \cite{CCK2012,KPV2014,KTV2016,PV2013,TVZ2018}. For error estimates of further sparsity promoting optimal control problems with PDEs see for example \cite{CMR2018,KPV2014}.
The literature on error estimates for optimal control problems with controls in BV is rather limited. We are only aware of \cite{CKP1999,Florian2017}.
Error estimates and numerical analysis for inverse problems involving BV-functions are studied in \cite{Bartels2012,BM2017}.
Related discussion of ODE-constrained control problems involving discontinuous functions and their numerical analysis can be found in, e.g., \cite{C2006,CCLZ2011,D1996,DHV2000,V1997,V2005,ABLG2013,AFS2018}.

The main difficulty in deriving error estimates for the above problem is given by the fact that it lacks certain coercivity properties that are usually employed to obtain error estimates for the controls, for instance by suitably testing the first order necessary optimality conditions. Hence, only error estimates for the state and the adjoint state can be proven in a rather direct manner; these are, however, suboptimal. To obtain an error estimate for the control and also to improve the error estimates for state and adjoint state, we make use of a structural assumption on the Lagrange multiplier $\bar\Phi$ arising from the convex subdifferential of the term $\|q'\|_{\mathcal{M}(\Omega)}$.
Specifically, we assume that $\bar \Phi$, which is a $C^2$ function in $\bar\Omega$, has only finitely many global extreme points and that it exhibits quadratic growth near those points (i.e., $\bar\Phi''\neq 0$ near those points; see \Cref{A2} and \Cref{A3}). 
Since the jump set of the optimal control is contained in the set of global extreme points of $\bar\Phi$, see \Cref{cor:supportcondition}, this assumption implies that the optimal control admits only finitely many jumps, which is a rather typical situation in practice. 
In addition, it ensures that the adjoint state has nonvanishing first derivative near the global extreme points of $\bar\Phi$,
which is closely related to assumptions used to derive error estimates for bang-bang control problems, see, e.g., \cite{BPV2019,CWW2018,DeckelnickHinze2012}.

Starting from possibly suboptimal error estimates for the state and adjoint state and incorporating the structural assumption, we are able to derive an error estimate for the controls in $L^1$ for both variational control discretization, where the order of the error is $\mathcal{O}(h^2)$, and piecewise constant control discretization, where we obtain $\mathcal{O}(h)$. Moreover, we provide numerical experiments which indicate that the established error estimates are optimal. To further substantiate the use of the $L^1$-norm in the error estimates for the control, we include numerical results for the order of convergence of the controls with respect to the $L^2$-norm. These results clearly show that in both discretization schemes the order of convergence in $L^1$ is higher than the one in $L^2$. 

Let us stress that the essential structural assumption on $\bar\Phi$ cannot be transferred to settings in which the control domain is of dimension greater than one.
This is due to the fact that in such settings the Lagrange multiplier $\bar\Phi$ does not characterize the jump set of the optimal control.
While this implies that the \emph{control} domain is limited to an interval, this is not the case for the domain of the \emph{state}. 
We expect that the analysis presented in this paper can be extended
to problems where the state lives on a domain of dimension larger than one.

This paper is structured as follows. In \Cref{sec_contprob} we provide the precise problem setting and discuss existence of optimal solutions as well as first order optimality conditions for the continuous problem. \Cref{sec_discrete} is concerned with the same aspects, but for the two discretization schemes. In \Cref{sec_error} we derive both the basic and the improved error estimates, which is why this section also contains the structural assumption. The numerical experiments are presented in \Cref{sec_numerics}.


\section{The continuous problem}\label{sec_contprob}

We will consider the following model problem in the one-dimensional spatial domain $\Omega:=(0,1)$.
Given the parameter $\alpha>0$, a desired state $u_d\in L^\infty(\Omega)$,  and functions $a\in C^{0,1}(\bar\Omega)$ and $d_0\in \Linf$ satisfying $a(x)\geq \nu>0$ with a constant $\nu>0$ for all $x\in\bar\Omega$ and $d_0(x)\geq 0$ for a.e. $x\in\Omega$, we are looking for a control $q\in Q:=\BV$ and an associated state $u\in V:=\Hoz$ solving the optimal control problem
\begin{equation*}
\min\limits_{(u,q)\in V\times Q} \,\underbrace{\frac12\Norm{u-u_d}{\LL}^2 + \alpha\Norm{q'}{\M}}_{=:J(u,q)}\quad
\text{s.t.} \quad 
\cala(u,w) = (q,w)_{\LL} \enspace \forall \, w\in V,
\end{equation*}
where the bilinear form $\cala$ is given by 
\begin{equation*}
	\cala\colon V \times V\rightarrow\R,\qquad
	\cala(v,w) := (a v',w')_{\LL}+(d_0 v,w)_{\LL}.
\end{equation*}

\subsection{The state equation}

Recall from, e.g., \cite{ambrosio,Giusti,Ziemer} that the space $\BV$ is given by those functions $v\in\Lone$ for which the distributional derivative $v'$ is a Radon measure, i.e.,
\begin{equation*}
\BV = \left\{v\in\Lone\colon \Norm{v'}{\M} < \infty\right\},
\end{equation*}
where $\M$ denotes the space of Radon measures.
The space $\BV$ is a Banach space if equipped with the norm
\begin{equation*}
\Norm{v}{\BV} := \Norm{v}{\Lone} + \Norm{v'}{\M},
\end{equation*}
see, e.g., \cite[Thm.~10.1.1]{Attouch}. Moreover, 
$\BV$ embeds continuously into $L^p(\Omega)$ for $p\in[1,\infty]$ and compactly into $L^p(\Omega)$ for $p\in [1,\infty)$, see, e.g., \cite[Cor.~3.49 together with Prop.~3.21]{ambrosio}. 
As $\BV$ embeds into $\LL$ we note that for every $q\in\BV$ the Lax-Milgram theorem
readily guarantees existence of a unique associated state $u=u(q)\in V$.
Thus, the use of the solution or control-to-state operator
\begin{equation*}
S\colon Q \subset V^* \rightarrow V
\end{equation*}
is justified. We note in passing that $S\colon V^*\rightarrow V$ is a self-adjoint isomorphism.
In fact, because we are working in dimension one, the following strong regularity result can be proven by standard arguments.

\begin{lemma}\label{lem_aprioriS}
Let $p\in (1,\infty]$. For all $v\in L^p(\Omega)$ there holds $S v \in W^{2,p}(\Omega)\cap V$, and the estimate
\begin{equation*}
	\Norm{S v}{W^{2,p}(\Omega)} \leq C \Norm{v}{L^p(\Omega)}
\end{equation*}
is satisfied, where the constant $C>0$ is independent of $v$ and $p$.
\end{lemma}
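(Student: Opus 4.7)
The plan is to exploit the one-dimensional setting to reduce the PDE to an ODE and to carry out bootstrapping in $L^\infty$, which is what will ultimately yield the claimed $p$-independence of the constant. First, Lax-Milgram (applicable since $\cala$ is coercive on $V$ thanks to $a\geq\nu>0$, $d_0\geq 0$, and Poincar\'e) gives $u:=Sv\in V$. Because $|\Omega|=1$ and $u(0)=u(1)=0$, one has $\|u\|_{L^\infty}\leq \|u'\|_{L^1}\leq\|u'\|_{L^2}$, and testing the weak equation with $u$ together with the same embedding yields
\[
\nu\|u'\|_{L^2}^2 \leq \cala(u,u) = (v,u)_{L^2} \leq \|v\|_{L^1}\|u\|_{L^\infty} \leq \|v\|_{L^p}\|u'\|_{L^2},
\]
so that $\|u\|_{L^\infty}\leq \tfrac{1}{\nu}\|v\|_{L^p}$ with a constant independent of $p\in(1,\infty]$ (here I use $\|v\|_{L^1}\leq\|v\|_{L^p}$ on the unit interval).

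Next, I pass to the strong form: a standard argument shows that the weak equation implies $au'\in W^{1,1}(\Omega)$ with $(au')'=d_0u-v$ almost everywhere. Since $d_0u\in L^\infty(\Omega)\subset L^p(\Omega)$ and $v\in L^p(\Omega)$, this gives $(au')'\in L^p(\Omega)$ with $\|(au')'\|_{L^p}\leq(1+\|d_0\|_{L^\infty}/\nu)\|v\|_{L^p}$. Because $u\in C(\bar\Omega)$ vanishes at both endpoints, Rolle's theorem produces $c\in(0,1)$ with $(au')(c)=0$, and therefore
\[
\|au'\|_{L^\infty} \leq \|(au')'\|_{L^1} \leq \|(au')'\|_{L^p},
\]
so that $\|u'\|_{L^\infty}\leq\tfrac{1}{\nu}\|(au')'\|_{L^p}\leq C\|v\|_{L^p}$.

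Finally, exploiting $a\in C^{0,1}(\bar\Omega)$, $a\geq\nu$, so that $1/a$ and $a'$ are in $L^\infty(\Omega)$, the identity
\[
u'' = \frac{1}{a}\bigl((au')' - a'u'\bigr)
\]
lies in $L^p(\Omega)$ with $\|u''\|_{L^p}\leq\tfrac{1}{\nu}(\|(au')'\|_{L^p}+\|a'\|_{L^\infty}\|u'\|_{L^p})\leq C\|v\|_{L^p}$. Combining this with the previously obtained $L^\infty$- and hence $L^p$-bounds on $u$ and $u'$ (again via $|\Omega|=1$) gives $\|u\|_{W^{2,p}}\leq C\|v\|_{L^p}$.

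The main obstacle I expect is simply keeping the constant uniform in $p$: every intermediate estimate must be arranged so as to avoid constants from Sobolev embeddings or H\"older inequalities that blow up as $p\to\infty$. This is possible precisely because in one dimension and on the unit interval all required embeddings pass through $L^\infty$, and $\|\cdot\|_{L^q}\leq\|\cdot\|_{L^\infty}$ trivially for every $q$. The endpoint case $p=\infty$ is covered without modification since $L^\infty(\Omega)\subset L^2(\Omega)$ makes the Lax-Milgram step available.
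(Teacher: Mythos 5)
Your proof is correct, and it takes a genuinely different route from the paper's. The paper's argument (kept in a commented-out block; the published text only says ``standard arguments'') regularizes the data by $L^\infty$ sequences $d_0^n\to d_0$, $v^n\to v$, invokes the $H^2$-regularity theory of Grisvard for the approximate problems, bootstraps via $H^1(\Omega)\hookrightarrow L^\infty(\Omega)$ and $H^2(\Omega)\hookrightarrow W^{1,\infty}(\Omega)$ to reach $W^{2,p}$, and then passes to the limit using Banach--Alaoglu and weak-$*$ lower semicontinuity, with the endpoint $p=\infty$ recovered by letting $p\to\infty$ in the $p$-uniform estimate. You instead integrate the equation directly: the weak form gives $(au')'=d_0u-v$ in the sense of distributions, so $au'\in W^{1,1}(\Omega)\hookrightarrow C(\bar\Omega)$; the zero of $au'$ guaranteed by the boundary conditions (note $u'=(au')/a$ is continuous, so classical Rolle does apply) lets you bound $\|au'\|_{L^\infty}$ by $\|(au')'\|_{L^1}$, and the identity $u''=a^{-1}\bigl((au')'-a'u'\bigr)$ --- legitimate since $a\in W^{1,\infty}$ with $a\geq\nu$, so $1/a\in W^{1,\infty}$ and the product rule for $W^{1,\infty}\cdot W^{1,1}$ applies --- finishes the estimate. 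Your approach buys a shorter, self-contained argument that needs no external regularity theorem, no approximation/limit passage, and covers $p=\infty$ directly; the paper's approach is the one that generalizes beyond dimension one, where the equation cannot simply be integrated. Both obtain the $p$-uniform constant for the same underlying reason: on the unit interval every intermediate bound can be routed through $L^\infty$, where all the relevant embedding constants equal one.
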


Introducing the reduced objective $j\colon Q\rightarrow\R$, $j(q):=J(S(q),q)$, we can now analyze the reduced version of the original problem, given by
\begin{equation}\label{prob:BV}\tag{P}
 \min_{q\in Q}j(q).
\end{equation}
We will demonstrate that \eqref{prob:BV} admits a unique solution, characterize this solution by means of optimality conditions, and draw some conclusions from the optimality conditions regarding the structure of the optimal solution. Due to convexity we need not distinguish between local and global solutions, and first order necessary conditions are also sufficient.

\subsection{Existence of optimal controls}
\begin{theorem} \label{thm:existenceofsol}
	Problem~\eqref{prob:BV} admits a unique optimal control  $\bar q \in Q$ with associated optimal state $\bar u\in V$.
\end{theorem}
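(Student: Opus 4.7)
The plan is to apply the direct method of the calculus of variations and derive uniqueness from strict convexity of the reduced objective.

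First, I would take a minimizing sequence $(q_n)_{n\in\N}\subset Q$ with $j(q_n)\to \inf_{q\in Q} j(q)$, which is finite since, e.g., $q\equiv 0$ is admissible. From the definition of $j$, both $\alpha\|q_n'\|_{\M}$ and $\|S q_n - u_d\|_{\LL}$ are bounded. The key step is to upgrade this to boundedness of $(q_n)$ in $\BV$. Writing $\bar q_n := \frac{1}{|\Omega|}\int_\Omega q_n\,dx$ and using the one-dimensional Poincar\'e-type inequality $\|q_n - \bar q_n\|_{L^\infty(\Omega)} \leq \|q_n'\|_{\M}$, the sequence $(q_n - \bar q_n)$ is bounded in $\Linf$, hence in $\LL$, so $(S(q_n - \bar q_n))$ is bounded in $V$ and in $\LL$. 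Since $S$ is linear, $S q_n = S(q_n - \bar q_n) + \bar q_n\, S(1)$, and because $S$ is injective we have $S(1)\not\equiv 0$ in $\LL$. The boundedness of $Sq_n$ in $\LL$ therefore forces $|\bar q_n|$ to be bounded, so $(q_n)$ is bounded in $\Linf$, hence in $\Lone$, and thus in $\BV$.

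Next, I would invoke the standard compactness result for $\BV$: there exists $\bar q\in\BV$ and a (non-relabeled) subsequence such that $q_n\to\bar q$ in $\Lone$ and $q_n'\overset{*}{\rightharpoonup}\bar q'$ in $\M$ (see, e.g., \cite[Thm.~3.23]{ambrosio}). Combined with the uniform $\Linf$ bound on $(q_n)$, dominated convergence yields $q_n\to\bar q$ in $\LL$, and continuity of $S\colon\LL\to\LL$ gives $Sq_n\to S\bar q=:\bar u$ in $\LL$. Consequently the tracking term converges, while weak$^*$ lower semicontinuity of the total variation in $\M$ yields $\|\bar q'\|_{\M} \leq \liminf_{n\to\infty} \|q_n'\|_{\M}$. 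Therefore
\begin{equation*}
	j(\bar q) \leq \liminf_{n\to\infty} j(q_n) = \inf_{q\in Q} j(q),
\end{equation*}
so that $\bar q$ is optimal with associated state $\bar u = S\bar q$.

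For uniqueness, I would show that $j$ is strictly convex. Convexity of the two summands of $j$ is clear (the first is the composition of the convex map $v\mapsto\tfrac12\|v-u_d\|_{\LL}^2$ with the linear operator $S$, and $\|\cdot'\|_{\M}$ is a seminorm). For strictness, let $q_1\neq q_2$ in $Q$ and $\lambda\in(0,1)$. Since $S$ is injective on $\LL\supset Q$, we have $Sq_1\neq Sq_2$ in $\LL$, and strict convexity of $v\mapsto\tfrac12\|v-u_d\|_{\LL}^2$ already implies strict inequality for the tracking term. Together with convexity of the BV-seminorm this gives $j(\lambda q_1 + (1-\lambda) q_2) < \lambda j(q_1) + (1-\lambda) j(q_2)$, so the minimizer is unique.

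The main obstacle is the first step: the BV-seminorm in the objective only controls the distributional derivative of $q$, so coercivity of $j$ over $\BV$ is not immediate and must be obtained by combining the Poincar\'e-type estimate on $q_n-\bar q_n$ with the information on the mean value transported through the state equation. Once this a priori bound is in place, the remaining arguments are routine applications of BV compactness, lower semicontinuity, and injectivity of $S$.
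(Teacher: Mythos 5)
Your proposal is correct and follows essentially the same route as the paper: bound the BV-seminorm directly from the objective, control the mean value $\bar q_n$ through the state equation using $S(1)\neq 0$, extract an $\Lone$-convergent subsequence by BV compactness, pass to the limit using continuity of the tracking term and lower semicontinuity of the total variation, and obtain uniqueness from strict convexity via injectivity of $S$. The only cosmetic difference is that you use the $L^\infty$ form of the Poincar\'e-type inequality for the mean-free part where the paper uses the $\Lone$ form with the isoperimetric constant; both work in one dimension.
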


\begin{proof}
The injectivity of $S$ implies that $j$ is strictly convex, so \eqref{prob:BV} has at most one solution.
To establish existence of $\bar q$, let us consider a minimizing sequence $(q_n)_{n\in\N}$ of $j$ with $j(q_n)\leq j(0)$ for all $n\in\N$.
Our goal is to bound the BV-norm of that sequence. Since there holds
\begin{equation} \label{eq:existenceproofm1}
\alpha \lVert q_n' \rVert_{\M} \leq j(q_n) \leq j(0),
\end{equation}
it only remains to establish that $(\norm{q_n}{\Lone})_{n\in\N}$ is bounded. From \cite[Thm. 3.44]{ambrosio} it follows that
\begin{equation} \label{eq:existenceproof0}
\lVert q_n - \hat q_n \rVert_{L^1(\Omega)} \leq C_{iso} \lVert q_n' \rVert_{\M} \leq \frac{C_{iso}j(0)}{\alpha},
\end{equation}
where $\hat q_n := \frac{1}{|\Omega|} \int_\Omega q_n \,dx$ and $C_{iso}$ depends only on $\Omega.$ Estimate \eqref{eq:existenceproof0} implies via the inverse triangle inequality that for all $n\in\N$ there holds
\begin{equation} \label{eq:existenceproof1}
\lVert q_n \rVert_{L^1(\Omega)} \leq \frac{C_{iso}j(0)}{\alpha} + |\hat q_n|,
\end{equation}
where we have used that $\lvert\Omega\rvert=1$.
Moreover, we have
\begin{equation*}
\begin{split}
|\hat q_n| \lVert S1 \rVert_{\LL}  
& \leq \lVert S (\hat q_n - q_n) \rVert_{\LL} +\lVert S q_n \rVert_{\LL}\\
& \leq \lVert S \rVert_{\Lin(V^*, \LL)}  \Norm{\hat q_n - q_n}{V^*} +\Norm{Sq_n}{\LL}.
\end{split}
\end{equation*}
Making use of the embedding $\Lone\hookrightarrow V^*$ with constant $C_{emb}$ we infer that the first term on the right-hand side can be bounded using \eqref{eq:existenceproof0}, and the second term can be bounded by \eqref{eq:existenceproofm1}. Together, this yields
\begin{align*}
|\hat q_n| \lVert S1 \rVert_{\LL} & \leq \frac{C_{iso}C_{emb} j(0)}{\alpha} \lVert S \rVert_{\Lin(V^*, \LL)} + \lVert Sq_n - u_d \rVert_{\LL}+\lVert u_d \rVert_{\LL}\\
& \leq \frac{C_{iso}C_{emb} j(0)}{\alpha} \lVert S \rVert_{\Lin(V^*, \LL)} + \sqrt{2 j(0) }+ \lVert u_d \rVert_{\LL}.
\end{align*}
This and \eqref{eq:existenceproof1} imply
\begin{equation*} 
\begin{split}
\lVert q_n \rVert_{L^1(\Omega)} & \leq \frac{C_{iso}j(0)}{\alpha} \\
& ~ + \lVert S1 \rVert_{\LL}^{-1} \left( \frac{C_{iso}C_{emb} j(0)}{\alpha} \lVert S \rVert_{\Lin(V^*, \LL)} + 2\sqrt{2 j(0) } \right),
\end{split}
\end{equation*}
where we have used that $S1\neq 0$ and that $\lVert u_d \rVert_{\LL} \leq \sqrt{2j(0)}$. 
In view of \eqref{eq:existenceproofm1} we have thus found for all $n\in\N$ that
\begin{equation} \label{eq:existenceproof3}
\begin{split}
\lVert q_n \rVert_{BV(\Omega)} & \leq \frac{(C_{iso}+1)j(0)}{\alpha} \\
& ~ + \lVert S1 \rVert_{\LL}^{-1} \left( \frac{C_{iso}C_{emb} j(0)}{\alpha} \lVert S \rVert_{\Lin(V^*, \LL)} + 2 \sqrt{2 j(0) } \right).
\end{split}
\end{equation}
Since $\BV$ is compactly embedded in $\Lone$, there is a subsequence $(q_{n_k})_{k\in\N}$ of $(q_n)$ and a $\bar q\in\Lone$ such that $q_{n_k}\to\bar q$ in $\Lone$ for $k\to\infty$.
By continuity of the mapping $\LL \ni q\mapsto\frac{1}{2}\norm{Sq -u_d}{\LL}^2$ and lower semicontinuity of $q\mapsto \norm{q'}{\M}$ with respect to the $L^1(\Omega)$ topology, cf. \cite[Thm.~5.2.1]{Ziemer}, we deduce that $j(\bar q) = \inf_{q\in Q} j(q)$. 
\end{proof}

\subsection{Optimality conditions}
Next, we provide necessary and sufficient optimality conditions for the optimal solution.

\begin{theorem}\label{thm_optcond}
	The control $\bar q\in Q$ with associated state $\bar u\in V$ is optimal for Problem~\eqref{prob:BV} if and only if
	there exists a unique adjoint state $\bar z\in W^{2,\infty}(\Omega) \cap V$ such that $(\bar u,\bar q,\bar z)$ and 
	the $W^{3,\infty}(\Omega)$ function $\bar\Phi\colon[0,1]\rightarrow\R$, $\bar\Phi(x):=\int_{0}^{x} \bar z(s)\,ds$ satisfy
	$\bar\Phi(1)=0$ as well as
	\begin{equation*}
	\begin{split}
	&\int_\Omega \bar\Phi \,d\bar q'=\alpha\Norm{\bar q'}{\M}\qquad\text{and}\qquad
	\norm{\bar\Phi}{\infty}\leq\alpha,\\\\
	&
		\begin{aligned}
			\cala(\bar u,w) & = (\bar q,w)_{\LL} &\forall\, w\in V,\\
			\cala(w,\bar z) & = (w,\bar u-u_d)_{\LL} &\forall\, w\in V,\\
		\end{aligned}
	\end{split}
	\end{equation*}

and

\begin{equation*}
- \left( \bar z, q-\bar q \right)_{\LL} \leq \alpha \left[\Norm{q'}{\M} - \Norm{ \bar q'}{\M}\right] \quad \forall q\in Q.
\end{equation*}
\end{theorem}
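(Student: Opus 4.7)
The plan is to set up the standard convex subdifferential calculus and translate the abstract subgradient condition into the concrete $\bar\Phi$-form stated in the theorem via a one-dimensional integration-by-parts formula. Since Problem~\eqref{prob:BV} is convex, $\bar q\in Q$ is optimal if and only if $0\in F'(\bar q)+\partial G(\bar q)$, where $F(q):=\tfrac12\|Sq-u_d\|_{\LL}^2$ is smooth and convex and $G(q):=\alpha\|q'\|_{\M}$ is proper, convex, and lower semicontinuous. This inclusion is exactly the final variational inequality in the theorem statement, provided $F'(\bar q)$ is identified with $\bar z$. To achieve this identification, I would define $\bar z$ as the weak solution of the adjoint equation; Lax--Milgram provides existence and uniqueness in $V$, and because $\bar u-u_d\in\Linf$ (using $V\hookrightarrow\Linf$ in one dimension together with $u_d\in\Linf$), \Cref{lem_aprioriS} lifts this to $\bar z\in W^{2,\infty}(\Omega)\cap V$. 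A direct computation using the self-adjointness of $S$ and the adjoint equation then shows $F'(\bar q)\,\delta q=(\bar z,\delta q)_{\LL}$ for all $\delta q\in Q$, which establishes the variational inequality.

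Next I would establish $\bar\Phi(1)=0$ by testing the variational inequality with the perturbations $q=\bar q\pm c$ for $c\in\R$: the right-hand side vanishes since $q'=\bar q'$, forcing $(\bar z,1)_{\LL}=0$ and hence $\bar\Phi(1)=\int_0^1\bar z\,ds=0$. The core technical step is the one-dimensional integration-by-parts identity
\begin{equation*}
(\bar z,q)_{\LL}=\int_0^1 \bar\Phi'\,q\,dx=-\int_\Omega \bar\Phi\,dq' \qquad \text{for all } q\in Q,
\end{equation*}
which uses $\bar\Phi(0)=0=\bar\Phi(1)$ to kill the boundary contributions. I would justify this identity by approximating $q$ in the strict BV topology by smooth functions and passing to the limit. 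Substituting it into the variational inequality recasts the latter as
\begin{equation*}
\int_\Omega \bar\Phi\,d(q'-\bar q')\leq \alpha\bigl[\|q'\|_{\M}-\|\bar q'\|_{\M}\bigr] \qquad \text{for all } q\in Q.
\end{equation*}
Testing with $q=0$ and $q=2\bar q$ sandwiches $\int_\Omega\bar\Phi\,d\bar q'$ between $\alpha\|\bar q'\|_{\M}$ from both sides, yielding the first $\bar\Phi$-identity. Testing with $q=\bar q\pm\varepsilon\chi_{[x_0,1)}$ for arbitrary $x_0\in(0,1)$ and small $\varepsilon>0$ gives $|\bar\Phi(x_0)|\leq\alpha$ (since $q'-\bar q'=\pm\varepsilon\delta_{x_0}$), and continuity of $\bar\Phi$ extends the bound to all of $[0,1]$.

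The converse implication is now immediate: assuming the stated conditions, convexity of $F$ yields $F(q)-F(\bar q)\geq(\bar z,q-\bar q)_{\LL}$ for every $q\in Q$, and the variational inequality then gives $j(q)\geq j(\bar q)$. Uniqueness of $\bar z$ follows from Lax--Milgram applied to the adjoint equation. The main obstacle I anticipate is the rigorous justification of the integration-by-parts formula at the BV level, because the distributional derivative $q'$ on $(0,1)$ does not by itself account for the one-sided traces $q(0^+)$ and $q(1^-)$; this is precisely why the condition $\bar\Phi(1)=0$, together with the free normalization $\bar\Phi(0)=0$ built into the definition of $\bar\Phi$, is indispensable, and why a careful strict-convergence approximation argument is required.
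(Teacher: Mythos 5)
Your proposal is correct and follows essentially the same route as the paper: optimality is characterized via $0\in\partial j(\bar q)$ with the sum/chain rule, $\bar z:=S^*(S\bar q-u_d)$ is identified as the adjoint state with regularity from \Cref{lem_aprioriS}, the variational inequality is tested with $q=0$, $q=2\bar q$, constants, and $q=1_{(x,1)}$, and the passage to $\bar\Phi$ is exactly the one-dimensional integration by parts you describe (the paper invokes the definition of the distributional derivative of BV functions for this step, relying on $\bar\Phi(0)=\bar\Phi(1)=0$ just as you do).
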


\begin{proof}
Using convex analysis, e.g. \cite{Peypouquet}, the optimality of $\bar q$ is equivalent to
\begin{equation*}
0 \in \partial j(\bar q),
\end{equation*}
where $\partial j(\bar q)$ denotes the subdifferential of $j$ at the point $\bar q$. By the chain rule and the sum rule, e.g. \cite[Proposition 3.28]{Peypouquet} and \cite[Thm. 3.30]{Peypouquet}, this is equivalent to
\begin{align} \label{eq:proof:optimalitycond0}
- S^*(S\bar q - u_d) \in \partial \left( \alpha \Norm{\bar q'}{\M} \right).
\end{align}
Note that the sum rule is applicable since both summands of $j$ are continuous on $Q$.
Defining $\bar z := S^*(S\bar q - u_d)$ and recalling $\bar u=S\bar q$ we obtain 
\begin{equation*}
\begin{aligned}
			\cala(\bar u,w) & = (\bar q,w)_{\LL} &\forall\, w\in V,\\
			\cala(w,\bar z) & = (w,\bar u-u_d)_{\LL} &\forall\, w\in V.\\
		\end{aligned}
\end{equation*}
In particular, the asserted regularity of $\bar z$ follows from \Cref{lem_aprioriS}, which in turn
implies $\bar\Phi\in W^{3,\infty}(\Omega)$.
Furthermore, the definition of the subdifferential implies that \eqref{eq:proof:optimalitycond0} can be equivalently expressed as
\begin{equation*}
-\left( \bar z, q - \bar q \right)_{\LL} \leq \alpha \left[ \lVert q' \rVert_{\M} - \lVert \bar q' \rVert_{\M} \right] \quad \forall q\in Q.
\end{equation*}
Testing with $q = 2 \bar q$, $q = 0$ and $q = \tilde q+\bar q$ for any $\tilde q\in Q$ yields the equivalent system
\begin{equation} \label{eq:proof:optimalitycond1}
\begin{split}
- \left( \bar z, \bar q \right)_{\LL} & = \alpha\lVert \bar q' \rVert_{\M},\\
\lvert \left( \bar z, q \right)_{\LL} \rvert & \leq \alpha \lVert q' \rVert_{\M} \quad \forall q\in Q.
\end{split}
\end{equation}
Inserting $q=1$ into \eqref{eq:proof:optimalitycond1} supplies $\bar\Phi(1) = \int_\Omega \bar z\,ds = 0$.
By the definition of the distributional derivative of BV functions, \eqref{eq:proof:optimalitycond1} is equivalent to
\begin{equation} \label{eq:proof:optimalitycond2}
\begin{split}
\int_\Omega \bar\Phi \,d\bar q' & = \alpha\lVert \bar q' \rVert_{\M},\\
\Abs{\int_\Omega \bar\Phi \,d q'} & \leq \alpha \lVert q' \rVert_{\M} 
\quad \forall q\in Q.
\end{split}
\end{equation}
For $x\in \Omega$ let $q:= 1_{(x,1)} \in Q$ be the characteristic function of the interval $(x,1)$. We have $q' = \delta_x$ and hence \eqref{eq:proof:optimalitycond2} yields $\abs{\bar\Phi(x)}\leq \alpha$. 
\end{proof}

\paragraph{\textbf{Structural conclusions}}
With the optimality conditions of Theorem \ref{thm_optcond} at hand, we can now derive helpful structural properties that hold without additional assumptions.

%

\begin{corollary} \label{cor:supportcondition}
	If $\bar q$ is optimal for \eqref{prob:BV}, then there hold 
	\begin{equation*}
	\begin{aligned}
	&\supp(\bar q'_+)\subset\left\{x\in\Omega\colon\,\bar\Phi(x)=\alpha\right\},&\\
	&\supp(\bar q'_-)\subset\left\{x\in\Omega\colon\,\bar\Phi(x)=-\alpha\right\},&
	\end{aligned}
	\end{equation*}
	where $\bar q'_+$ and $\bar q'_-$ denote the positive and the negative part of the Jordan decomposition of the measure $\bar q'$. Moreover, we have
	\begin{equation}\label{eq:jumpsatrootsofz}
	\supp(\bar q')
	\subset\left\{x\in\Omega\colon\,\Abs{\bar\Phi(x)}=\alpha\right\}
	\subset\bigl\{x\in\Omega\colon\,\bar z(x)=0\bigr\}.
	\end{equation}
\end{corollary}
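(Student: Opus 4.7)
The plan is to reduce the first two statements to the two scalar relations $\int_\Omega \bar\Phi\,d\bar q'=\alpha\lVert\bar q'\rVert_{\mathcal{M}(\Omega)}$ and $\lVert\bar\Phi\rVert_\infty\le\alpha$ from Theorem \ref{thm_optcond}, by exploiting the sign structure of the Jordan decomposition. Writing $\bar q'=\bar q'_+-\bar q'_-$ with mutually singular nonnegative measures $\bar q'_\pm$ and $\lVert\bar q'\rVert_{\mathcal{M}(\Omega)}=\bar q'_+(\Omega)+\bar q'_-(\Omega)$, the first identity becomes
\begin{equation*}
\int_\Omega(\alpha-\bar\Phi)\,d\bar q'_+\;+\;\int_\Omega(\alpha+\bar\Phi)\,d\bar q'_-\;=\;0.
\end{equation*}
Since $\lVert\bar\Phi\rVert_\infty\le\alpha$ both integrands are pointwise nonnegative, and $\bar q'_\pm$ are nonnegative measures, so each integral must vanish separately.

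From the vanishing integrals I would conclude that $\alpha-\bar\Phi=0$ holds $\bar q'_+$-almost everywhere and $\alpha+\bar\Phi=0$ holds $\bar q'_-$-almost everywhere. Because $\bar\Phi\in W^{3,\infty}(\Omega)\hookrightarrow C(\bar\Omega)$, the level sets $\{\bar\Phi=\alpha\}$ and $\{\bar\Phi=-\alpha\}$ are relatively closed in $\Omega$, so by the definition of the support of a Radon measure the inclusions
\begin{equation*}
\supp(\bar q'_+)\subset\{x\in\Omega:\bar\Phi(x)=\alpha\},\qquad\supp(\bar q'_-)\subset\{x\in\Omega:\bar\Phi(x)=-\alpha\}
\end{equation*}
follow. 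Taking the union and recalling that $\supp(\bar q')=\supp(\bar q'_+)\cup\supp(\bar q'_-)$ then yields the first inclusion in \eqref{eq:jumpsatrootsofz}.

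For the second inclusion in \eqref{eq:jumpsatrootsofz} I would use that $\bar\Phi'=\bar z$ together with the interior-extremum principle. Indeed, any $x\in\Omega$ with $\lvert\bar\Phi(x)\rvert=\alpha$ is an interior point of $\bar\Omega$ at which the continuous function $\bar\Phi$ attains its global maximum $\alpha$ or global minimum $-\alpha$ on $\bar\Omega$, in view of $\lVert\bar\Phi\rVert_\infty\le\alpha$; hence $\bar\Phi'(x)=0$, i.e.\ $\bar z(x)=0$. The only subtle point to check is that $\Omega=(0,1)$ is open so such $x$ is genuinely interior, which makes the classical Fermat argument applicable; this is the only place where the distinction between $\Omega$ and $\bar\Omega$ plays a role, and I do not anticipate a real obstacle. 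The main step is really the sign argument in the first paragraph; everything else is a continuity/support bookkeeping exercise.
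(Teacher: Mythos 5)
Your proposal is correct and rests on exactly the same two ingredients as the paper's proof, namely the identity $\int_\Omega\bar\Phi\,d\bar q'=\alpha\Norm{\bar q'}{\M}$ and the bound $\norm{\bar\Phi}{\infty}\leq\alpha$ from \Cref{thm_optcond}, together with the interior-extremum argument $0=\bar\Phi'(x)=\bar z(x)$ for the second inclusion in \eqref{eq:jumpsatrootsofz}. The only difference is presentational: the paper localizes (a neighborhood where $\bar\Phi\leq\alpha-\delta$ must be a $\bar q'_+$-null set), whereas you argue globally that two nonnegative integrals summing to zero each vanish and then use closedness of the level sets — both are routine ways to extract the same support inclusion.
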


\begin{proof}
Let $\hat x\in \Omega$ with $\bar \Phi(\hat x) < \alpha$. By the continuity of $\bar\Phi$ there is an open neighborhood $U\subset\Omega$ of $\hat x$ and $\delta>0$ such that $\bar\Phi \leq \alpha-\delta$ on $U$. Then we have
\begin{align*}
\alpha \lVert \bar q' \rVert_{\M} & = \int_\Omega \bar\Phi ~d\bar q'_+ - \int_\Omega \bar\Phi~d\bar q'_- \leq \int_{\Omega \setminus U} \alpha ~d\bar q'_+ + \int_U (\alpha-\delta) ~d\bar q'_+ + \int_\Omega\alpha ~d\bar q'_-\\
& = \alpha \lVert \bar q' \rVert_{\M} - \delta \bar q'_+(U).
\end{align*}
Thus $\bar q'_+(U) = 0$ and $\hat x\not\in \supp(\bar q'_+)$. The claim for $\bar q'_-$ follows analogously. The first inclusion in \eqref{eq:jumpsatrootsofz} follows from
	\begin{equation*}
	\supp(\bar q')=\supp(\bar q'_+)\cup\supp(\bar q'_-)\subset\left\{x\in\Omega\colon\,\Abs{\bar\Phi(x)}=\alpha\right\}.
	\end{equation*}
	\Cref{thm_optcond} implies that every $x$ with $\abs{\bar\Phi(x)}=\alpha$ is either a global maximum or minimum of the $C^1$ function $\bar\Phi$ and hence satisfies $0 = \bar\Phi'(x) = \bar z(x)$, establishing the second inclusion in \eqref{eq:jumpsatrootsofz}.	
\end{proof}


\section{Finite element discretization}\label{sec_discrete}
For the discretization of \eqref{prob:BV} we divide $\bar\Omega=[0,1]$ into $1 < l$ subintervals $T_i=(x_{i-1},{x_i})$ of size $h_i$ defined by the spatial nodes
\begin{equation*}
 	0=x_0<x_1<\ldots<x_l=1, \qquad \mathcal{N}_h := \bigl\{x_0,x_1,\dots,x_l\bigr\}.
\end{equation*}
We obtain $\Omega=\bigcup\limits_{1\leq i\leq l} T_i$ and set ${\mathcal{T}}_h :=\bigcup\limits_{1\leq i\leq l} \{T_i\}$,
where $h:=\max_{1\leq i\leq l}h_i$ denotes the mesh width.

\subsection{Discretization of the state equation}\label{subsec_discrete_state}

To discretize the state equation we use linear finite elements, i.e., the discrete state space $V_h$ is given by
\begin{equation*}
V_h := \left\lbrace v_h\in V \cap C_0(\bar \Omega): \; v_h|_{T} \text{ is linear for all }T\in\mathcal{T}_h \right\rbrace,
\end{equation*}
where $C_0(\bar \Omega)$ denotes the continuous functions on $\bar\Omega$ that vanish on $\partial\Omega$.

For further reference we recall that the \emph{Ritz projection} associated to the bilinear form $\cala$, denoted $R_h\colon V \to V_h$, satisfies
\begin{equation*}
\cala(R_hv,w_h)=\cala(v,w_h)\quad\forall\, w_h\in V_h.
\end{equation*}
It is well known that for each $v\in V$ this variational equality has a unique solution. 
Moreover, the discrete solution operator is denoted by $S_h\colon V^* \rightarrow V_h$ and satisfies, with $u_h := S_hv$,
\begin{equation*}
\cala(u_h,w_h) = (v,w_h)_{\LL} \quad\forall\, w_h\in V_h.\\
\end{equation*}
Since these identities, in fact, uniquely determine $R_h$ and $S_h$, it follows that $S_h = R_hS$ on $V^*$. 

Concerning the approximation quality of $S_h$ we cite the following well-known results.

\begin{lemma}\label{lem_aprioridiffSSh}
	There exist $C>0$ and $h_0>0$ such that for every $h\in(0,h_0]$ and all $v\in\LL$ there hold
	\begin{equation*}
	\Norm{S v-S_h v}{\LL} \leq C h^2\norm{v}{\LL}\qquad\text{and}\qquad
	\Norm{S v-S_h v}{V} \leq C h\norm{v}{\LL}.
	\end{equation*}
\end{lemma}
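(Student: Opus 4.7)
The plan is to proceed via the standard two-step route: first establish the $V$-estimate via quasi-optimality of the Ritz projection, and then upgrade it to the $L^2$-estimate by the Aubin--Nitsche duality argument.

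For the $V$-estimate I would exploit the identity $S_h = R_h S$ noted just before the lemma, so that $Sv - S_h v = Sv - R_h(Sv)$ is exactly the Ritz error of the $H^1_0$-function $Sv$. Since $\cala$ is bounded and $V$-elliptic (recall $a \geq \nu > 0$ and $d_0 \geq 0$), C\'ea's lemma gives the quasi-optimality estimate
\begin{equation*}
\Norm{Sv - R_h(Sv)}{V} \leq C \inf_{w_h \in V_h} \Norm{Sv - w_h}{V}.
\end{equation*}
Choosing $w_h = I_h(Sv)$ to be the piecewise linear nodal interpolant and invoking the classical interpolation estimate $\Norm{Sv - I_h(Sv)}{V} \leq C h \Norm{Sv}{H^2(\Omega)}$, together with the regularity bound $\Norm{Sv}{H^2(\Omega)} \leq C \Norm{v}{\LL}$ provided by \Cref{lem_aprioriS} with $p=2$, yields the desired $\mathcal{O}(h)$ estimate in $V$.

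For the $L^2$-estimate I would use the Aubin--Nitsche trick. Set $e := Sv - S_h v$ and consider the dual problem of finding $\phi \in V$ with
\begin{equation*}
\cala(w,\phi) = (e,w)_{\LL} \quad \forall\, w \in V,
\end{equation*}
so that $\phi = S^* e = Se$ by self-adjointness of $S$. Galerkin orthogonality, which follows since $\cala(Sv, w_h) = (v,w_h)_{\LL} = \cala(S_h v, w_h)$ for all $w_h \in V_h$, gives $\cala(e, w_h) = 0$ for every $w_h \in V_h$. Therefore, using $w = e$ in the dual problem,
\begin{equation*}
\Norm{e}{\LL}^2 = \cala(e,\phi) = \cala(e, \phi - R_h \phi) \leq C \Norm{e}{V} \Norm{\phi - R_h\phi}{V}.
\end{equation*}
The first factor is bounded by $C h \Norm{v}{\LL}$ from the first part, and the second factor is bounded by applying that same $V$-estimate to $\phi = Se$, giving $C h \Norm{e}{\LL}$. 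Dividing by $\Norm{e}{\LL}$ (the case $e=0$ being trivial) produces the $\mathcal{O}(h^2)$ bound.

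No step strikes me as a genuine obstacle, since both arguments are textbook and all ingredients (ellipticity, self-adjointness, $H^2$-regularity from \Cref{lem_aprioriS}, piecewise linear interpolation estimates) are already in place. The only care needed is to ensure that the constant $C$ and the threshold $h_0$ are independent of $v$; this is automatic because the constants in C\'ea's lemma, the interpolation estimate, and the regularity bound in \Cref{lem_aprioriS} are all independent of the data.
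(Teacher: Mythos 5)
Your proof is correct and is precisely the standard argument (C\'ea's lemma for the $V$-estimate, Aubin--Nitsche duality for the $L^2$-estimate) that the paper invokes by simply citing \cite[Section 3.2]{ErnGuermond2004}; the paper gives no independent proof. All the ingredients you use (coercivity and symmetry of $\cala$, the identity $S_h=R_hS$, and the $H^2$-regularity from \Cref{lem_aprioriS}) are available in the paper, so your write-up is a faithful expansion of the cited reference.
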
 
	
\begin{proof}
	Cf., e.g., \cite[Section 3.2]{ErnGuermond2004}.
\end{proof}
	
\begin{lemma}\label{lem_ritz}
	There exist $C>0$ and $h_0>0$ such that for every $h\in(0,h_0]$ and all $v\in L^\infty(\Omega)$ there holds
	\begin{equation*}
	\lVert Sv - S_hv\rVert_{L^\infty(\Omega)} \leq C h^2 \lVert v \rVert_{L^\infty(\Omega)}.
	\end{equation*}
\end{lemma}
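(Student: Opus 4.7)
The plan is to reduce the statement to a classical maximum-norm error estimate for the Ritz projection of one-dimensional linear finite elements, and to invoke the regularity of $S$ supplied by \Cref{lem_aprioriS}.

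First I would use the identity $S_h = R_h S$ recalled in \Cref{subsec_discrete_state} together with \Cref{lem_aprioriS} for $p=\infty$, which yields $Sv\in W^{2,\infty}(\Omega)\cap V$ with $\Norm{Sv}{W^{2,\infty}(\Omega)}\leq C\Norm{v}{L^\infty(\Omega)}$. The lemma is thereby reduced to the abstract uniform estimate
\begin{equation*}
\Norm{w-R_hw}{L^\infty(\Omega)} \leq Ch^2\Norm{w}{W^{2,\infty}(\Omega)}
\qquad \forall\, w\in W^{2,\infty}(\Omega)\cap V,
\end{equation*}
with a constant $C$ independent of $w$ and $h$.

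To establish this latter estimate I would use an Aubin--Nitsche-type duality argument. Splitting $w-R_hw=(w-I_hw)+(I_hw-R_hw)$ with $I_hw\in V_h$ the nodal interpolant, the first summand is controlled by $Ch^2\Norm{w''}{L^\infty(\Omega)}$ from standard piecewise-linear interpolation, so only the discrete remainder $I_hw-R_hw\in V_h$ needs to be bounded. For each fixed $x_0\in\bar\Omega$ I would introduce a regularized Green's function $g^{x_0}_\rho\in V$ satisfying $\cala(\psi,g^{x_0}_\rho)=(\psi,\eta^{x_0}_\rho)_{\LL}$ for all $\psi\in V$, where $\eta^{x_0}_\rho$ is a non-negative mollification of the Dirac measure at $x_0$. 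Galerkin orthogonality together with interpolation estimates on $g^{x_0}_\rho$, refined by a separate treatment of the single element containing $x_0$, then gives
\begin{equation*}
|(I_hw-R_hw)(x_0)|=|\cala(w-I_hw, g^{x_0}_\rho-I_hg^{x_0}_\rho)|+o_\rho(1)\leq Ch^2\Norm{w''}{L^\infty(\Omega)}
\end{equation*}
after passing $\rho\to 0$, with $C$ independent of $x_0$ and $h$. Taking the supremum over $x_0$ closes the estimate.

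The main obstacle is that the Green's function $g^{x_0}$ has a corner at $x_0$, so a naive duality bound only yields the suboptimal order $O(h^{3/2})$. What rescues the full $O(h^2)$ rate here is precisely the one-dimensional structure: $g^{x_0}$ is uniformly bounded in $W^{1,\infty}(\Omega)$ in the pole $x_0$, and is smooth on each of the two subintervals $(0,x_0)$ and $(x_0,1)$ with bounds uniform in $x_0$. In higher dimension this argument would instead produce an $h^2|\log h|$ loss. Since the estimate is classical in this setting, in the actual write-up I would either carry out the duality computation briefly or just appeal to a standard textbook reference such as \cite{ErnGuermond2004}, in the spirit of the proofs of \Cref{lem_aprioridiffSSh}.
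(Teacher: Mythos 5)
Your reduction is exactly the paper's: use $S_h=R_hS$ together with the $W^{2,\infty}$-regularity from \Cref{lem_aprioriS}, and then invoke the classical one-dimensional maximum-norm estimate for the Ritz projection, which the paper simply cites as the main theorem of \cite{Wheeler1973} (whose proof is the Green's-function duality argument you sketch). The proposal is correct and essentially identical in approach, differing only in that you outline the proof of the cited estimate rather than referencing it.
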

	
\begin{proof}
	This is the main theorem of \cite{Wheeler1973}, keeping the regularity from \Cref{lem_aprioriS} in mind.
\end{proof}
	
The next lemma shows that $S_h$ is stable from $\LL$ to $\Woneinf$. 
\begin{lemma}\label{lem_aprioriWoneinf}
	There exist $C>0$ and $h_0>0$ such that for every $h\in(0,h_0]$ and all $v\in\LL$ there holds
	\begin{equation*}
	\Norm{S_h v}{W^{1,\infty}(\Omega)} \leq C \lVert Sv\rVert_{W^{1,\infty}(\Omega)} \leq C \Norm{v}{\LL}.
	\end{equation*}
\end{lemma}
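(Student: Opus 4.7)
The plan is to handle the two inequalities separately.

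For the second inequality $\|Sv\|_{\Woneinf}\le C\|v\|_{\LL}$, I would apply \Cref{lem_aprioriS} with $p=2$ to get $\|Sv\|_{\Htwo}\le C\|v\|_{\LL}$ and then invoke the one-dimensional Sobolev embedding $\Htwo\hookrightarrow\Woneinf$. This part is essentially routine and self-contained.

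For the first inequality $\|S_hv\|_{\Woneinf}\le C\|Sv\|_{\Woneinf}$, the plan is to exploit the identity $S_h = R_h S$ noted right before the lemma and thereby reduce the claim to a stability estimate for the Ritz projection in $\Woneinf$,
\[
\|R_h w\|_{\Woneinf}\le C\|w\|_{\Woneinf}\qquad\forall\,w\in V\cap\Woneinf,
\]
applied with $w=Sv$. I would establish this by splitting $R_h w = I_h w + \phi_h$, where $I_h\colon V\cap C(\bar\Omega)\to V_h$ is the nodal Lagrange interpolant and $\phi_h := R_h w - I_h w \in V_h$. A mean-value argument on each element $T_i$ gives $|(I_h w)'|_{T_i}|=|w(x_i)-w(x_{i-1})|/h_i\le\|w'\|_{L^\infty(T_i)}$, so $\|I_h w\|_{\Woneinf}\le\|w\|_{\Woneinf}$ at once. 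Since $I_h w$ matches $w$ at every node, $\phi_h(x_j)=(R_h w-w)(x_j)$, and since $\phi_h$ is linear on each $T_i$ its (constant) slope there is controlled by $2 h_i^{-1}\|R_h w - w\|_{\Linf}$, while $\|\phi_h\|_{\Linf}$ is directly bounded by $\|R_h w - w\|_{\Linf}+\|w-I_h w\|_{\Linf}$.

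The main obstacle is therefore the pointwise estimate $\|R_h w - w\|_{\Linf}\le C\,h\,\|w\|_{\Woneinf}$. The interpolation part is elementary, since $\|w-I_h w\|_{L^\infty(T_i)}\le h_i\|w'\|_{L^\infty(T_i)}$, but the Ritz correction $R_h w - I_h w$ requires an Aubin-Nitsche duality argument of the same flavour as the one underpinning \Cref{lem_ritz}, combined with Galerkin orthogonality, to convert $W^{1,\infty}$-regularity of $w$ into an $L^\infty$-bound with a factor of $h$. Under the quasi-uniformity that is implicit in the references used for \Cref{lem_aprioridiffSSh} and \Cref{lem_ritz}, the ratio $h/h_i$ stays bounded uniformly in $h$, which absorbs the $h_i^{-1}$ in the slope formula and yields $\|\phi_h\|_{\Woneinf}\le C\|w\|_{\Woneinf}$. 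Combined with the interpolant estimate this closes the proof of Ritz stability and hence of the lemma.
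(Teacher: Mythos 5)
Your treatment of the second inequality ($\|Sv\|_{\Woneinf}\le C\|v\|_{\LL}$ via \Cref{lem_aprioriS} with $p=2$ and $\Htwo\hookrightarrow\Woneinf$) coincides with the paper's. For the first inequality the paper does not prove the $W^{1,\infty}$-stability of the Ritz projection at all: it simply cites the max-norm stability theorem \cite[Thm.~8.1.11]{brenner} and combines it with $S_h=R_hS$. You instead try to derive that stability from scratch, and this is where the gap sits. Your decomposition $R_hw=I_hw+\phi_h$ with nodal control of $\phi_h$ is a legitimate device, and it correctly reduces everything to the single estimate $\|R_hw-w\|_{\Linf}\le Ch\|w\|_{\Woneinf}$. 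But that estimate is asserted, not proven, and it is not ``of the same flavour'' as \Cref{lem_ritz}: Wheeler's argument requires $W^{2,\infty}$ data and produces $O(h^2)$, whereas here $w$ is only assumed to lie in $\Woneinf$, so $\|w-I_hw\|_{H^1}$ carries no power of $h$ and the standard Aubin--Nitsche computation stalls. The natural duality proof of your claimed bound pairs $R_hw-w$ against a (regularized) Green's function and needs a $W^{1,1}$-error or $W^{1,1}$-stability bound for the Ritz projection of that Green's function --- a result that is dual to, and essentially as deep as, the $W^{1,\infty}$-stability you are trying to establish. As written, the hard part of the lemma has been relocated rather than resolved.

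The gap is repairable in this one-dimensional setting, but by a different estimate than the one you invoke. Since the lemma is applied with $w=Sv$ and \Cref{lem_aprioriS} gives $w\in\Htwo$ with $\|w\|_{\Htwo}\le C\|v\|_{\LL}$, you can bound the nodal values of $\phi_h$ by
$\|R_hw-w\|_{\Linf}\le C\|R_hw-w\|_{H^1(\Omega)}\le Ch\|w\|_{\Htwo}$
using C\'ea's lemma, the standard $H^1$-error estimate (as in \Cref{lem_aprioridiffSSh}) and the embedding $\Hoz\hookrightarrow\Linf$; together with your bound on $I_hw$ and quasi-uniformity this yields $\|S_hv\|_{\Woneinf}\le C\|v\|_{\LL}$, which is the bound the paper actually uses downstream. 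Note, however, that this repair only covers $w$ of the form $Sv$; the literal stability statement $\|R_hw\|_{\Woneinf}\le C\|w\|_{\Woneinf}$ for general $w\in V\cap\Woneinf$ (which the paper also exploits, e.g.\ in \Cref{lem:W1inftystabofritz}, where $R_h$ is applied to $w-I_hw$) genuinely requires the cited max-norm stability machinery and does not follow from your sketch.
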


\begin{proof}
This is a consequence of the stability result from \cite[Thm. 8.1.11]{brenner}, the embedding $H^2(\Omega)\hookrightarrow W^{1,\infty}(\Omega)$, and \Cref{lem_aprioriS}.
\end{proof}

\begin{lemma} \label{lem:W1inftystabofritz}
Let $w\in H^2(\Omega)\cap V$ and $R_hw$ its Ritz projection. Then there are $C,h_0>0$ such that for each $h\in (0,h_0]$ we have
\begin{equation*}
\lVert (R_hw-w)^\prime\rVert_{L^\infty(\Omega)} \leq Ch^{\frac{1}{2}} \lVert w \rVert_{H^2(\Omega)}.
\end{equation*}
If $w\in W^{2,\infty}(\Omega)\cap V$ we even have
\begin{equation*}
\lVert (R_hw-w)^\prime\rVert_{L^\infty(\Omega)} \leq Ch \lVert w \rVert_{W^{2,\infty}(\Omega)}.
\end{equation*}
In both cases, the constant $C>0$ is independent of $w$ and $h$.
\end{lemma}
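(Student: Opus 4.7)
The plan is to introduce the nodal interpolant $I_h w \in V_h$ and split
\[
(R_h w - w)' = (R_h w - I_h w)' + (I_h w - w)',
\]
estimating the two pieces separately. The second piece is pure interpolation error and is treated directly; the first piece is an element of the discrete space and is controlled by an inverse estimate combined with a maximum-norm bound on $R_h w - I_h w$.

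For the interpolation piece I would use the standard one-dimensional mean-value argument: on each $T_i$ there exists $\xi_i \in T_i$ with $(I_h w - w)'(\xi_i)=0$, so that
\[
(I_h w - w)'(x) = -\int_{\xi_i}^{x} w''(s)\,ds \qquad \forall x \in T_i.
\]
Cauchy--Schwarz and the definition of $h$ give $\|(I_h w - w)'\|_{L^\infty(T_i)} \leq h_i^{1/2}\|w''\|_{L^2(T_i)} \leq h^{1/2}\|w\|_{H^2(\Omega)}$, which is the right bound for the first assertion. In the $W^{2,\infty}$ case the same formula gives $\|(I_h w - w)'\|_{L^\infty(T_i)} \leq h\,\|w''\|_{L^\infty(\Omega)}$ directly.

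For the Galerkin piece I would invoke, for the piecewise linear quantity $R_h w - I_h w \in V_h$, the standard inverse estimate (using quasi-uniformity of the mesh, implicit in the setting)
\[
\|v_h'\|_{L^\infty(\Omega)} \leq C h^{-1} \|v_h\|_{L^\infty(\Omega)} \quad \forall v_h \in V_h,
\]
so it suffices to bound $\|R_h w - I_h w\|_{L^\infty(\Omega)}$ by $\|R_h w - w\|_{L^\infty(\Omega)} + \|w - I_h w\|_{L^\infty(\Omega)}$. The interpolation term is controlled by $C h^{3/2}\|w\|_{H^2(\Omega)}$ (respectively $C h^2\|w''\|_{L^\infty(\Omega)}$) by integrating the estimate above. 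The Ritz error in $L^\infty$ is the main issue. In the $W^{2,\infty}$ case I would write $w = S(Aw)$ with $Aw \in L^\infty(\Omega)$, identify $R_h w = S_h(Aw)$, and apply \Cref{lem_ritz} to obtain $\|R_h w - w\|_{L^\infty(\Omega)} \leq C h^2 \|Aw\|_{L^\infty(\Omega)} \leq C h^2 \|w\|_{W^{2,\infty}(\Omega)}$. The inverse estimate then produces the claimed $Ch\|w\|_{W^{2,\infty}(\Omega)}$ bound.

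The main obstacle is obtaining an adequate $L^\infty$-bound on $R_h w - w$ when only $w \in H^2(\Omega)\cap V$ is assumed, since \Cref{lem_ritz} requires $L^\infty$-data. I would resolve this by the one-dimensional Sobolev-interpolation inequality $\|v\|_{L^\infty(\Omega)} \leq \sqrt{2}\,\|v\|_{L^2(\Omega)}^{1/2}\|v'\|_{L^2(\Omega)}^{1/2}$, valid for $v \in H^1_0(\Omega) = V$, applied to $v = R_h w - w$. Combining the Aubin--Nitsche $L^2$-bound $\|R_h w - w\|_{L^2(\Omega)} \leq C h^2 \|w\|_{H^2(\Omega)}$ (which follows from \Cref{lem_aprioridiffSSh} via $R_h w = S_h(Aw)$, $w = S(Aw)$) with the $H^1$-bound $\|(R_h w - w)'\|_{L^2(\Omega)} \leq C h \|w\|_{H^2(\Omega)}$ yields $\|R_h w - w\|_{L^\infty(\Omega)} \leq C h^{3/2}\|w\|_{H^2(\Omega)}$. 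Inserting this into the inverse-estimate bound gives $\|(R_h w - I_h w)'\|_{L^\infty(\Omega)} \leq C h^{1/2}\|w\|_{H^2(\Omega)}$, matching the interpolation-error bound and completing the proof.
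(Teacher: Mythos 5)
Your argument is correct, but it follows a genuinely different route from the paper's. The paper also splits off the nodal interpolant, writing $R_hw-w=R_h(w-I_hw)+(I_hw-w)$, but then finishes in one line: it invokes the $W^{1,\infty}(\Omega)$-stability of the Ritz projection (the content of \Cref{lem_aprioriWoneinf}, via \cite[Thm.~8.1.11]{brenner}) to get $\lVert R_h(w-I_hw)\rVert_{W^{1,\infty}(\Omega)}\leq C\lVert I_hw-w\rVert_{W^{1,\infty}(\Omega)}$, so that both rates reduce to the interpolation estimates of \cite[Thm.~4.4.20]{brenner}. You instead avoid the stability theorem altogether: you control $(R_hw-I_hw)'$ by an inverse estimate times $\lVert R_hw-I_hw\rVert_{L^\infty(\Omega)}$, and obtain the needed $L^\infty$-bounds on $R_hw-w$ from \Cref{lem_ritz} (via the legitimate identification $R_hw=R_hS(Aw)=S_h(Aw)$) in the $W^{2,\infty}$ case, and from the one-dimensional interpolation inequality $\lVert v\rVert_{L^\infty(\Omega)}\leq\sqrt{2}\,\lVert v\rVert_{L^2(\Omega)}^{1/2}\lVert v'\rVert_{L^2(\Omega)}^{1/2}$ combined with the $L^2$- and $H^1$-estimates of \Cref{lem_aprioridiffSSh} in the $H^2$ case. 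The trade-off is clear: the paper's proof is shorter but leans on the nontrivial maximum-norm gradient stability of $R_h$; yours uses only the more elementary error estimates already established, at the price of the inverse inequality, which genuinely requires quasi-uniformity of the mesh. You flag this as ``implicit in the setting,'' and that is defensible --- the stability and interpolation theorems the paper cites from \cite{brenner} are themselves stated for quasi-uniform families, and the authors' own auxiliary computations assume it --- but you should state the hypothesis explicitly rather than leave it parenthetical, since without it the step $\lVert v_h'\rVert_{L^\infty(\Omega)}\leq Ch^{-1}\lVert v_h\rVert_{L^\infty(\Omega)}$ fails (the correct elementwise constant is $h_i^{-1}$, not $h^{-1}$).
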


\begin{proof}
\Cref{lem_aprioriWoneinf} implies that the Ritz projection is stable in $W^{1,\infty}(\Omega)$ and thus
\begin{align*}
\lVert R_h w-w \rVert_{W^{1,\infty}(\Omega)} & \leq \lVert R_h ( w- I_h w) \rVert_{W^{1,\infty}(\Omega)} + \lVert I_h w- w \rVert_{W^{1,\infty}(\Omega)} \\
& \leq C\lVert I_h w- w \rVert_{W^{1,\infty}(\Omega)}.
\end{align*}
Here, $I_hw$ is the usual nodal interpolant of $w$. The two estimates now follow from \cite[Thm. 4.4.20]{brenner}.
\end{proof}

\subsection{Variational control discretization}\label{subsec_vd}
In this section we discuss the variational discretization of problem~\eqref{prob:BV}, in which the controls are not discretized explicitly. 
We show that the resulting semi-discrete problem admits a unique solution, characterize this solution by means of optimality conditions, 
and draw conclusions from the optimality conditions regarding the structure of the optimal solution.

The variationally discretized version of \eqref{prob:BV} is given by
\begin{equation*}
\begin{split}
&\min_{(u_h,q)\in V_h\times Q} 
\frac12\Norm{u_h-u_d}{\LL}^2 + \alpha\Norm{q'}{\M}\\
&\qquad\text{s.t.}\qquad
\cala(u_h,w_h) = (q,w_h)_{\LL} \enspace \forall \, w_h\in V_h.
\end{split}
\end{equation*}
Defining $j_h\colon Q\rightarrow\R$ by $j_h(q):=J(S_h(q),q)$, its reduced formulation reads
\begin{equation}\label{prob_BVh}\tag{P$_{\text{vd}}$}
 \min_{q\in Q} j_h(q).
\end{equation}

\Cref{thm:existenceofsol} has the following discrete counterpart.

\begin{theorem} \label{thm:existenceofsolh}
Problem~\eqref{prob_BVh} admits a unique optimal control $\bar q_h\in Q$ with associated optimal state $\bar u_h\in V_h$.
There exist $C>0$ and $h_0>0$ such that for all $h\in (0,h_0]$ the controls satisfy $\lVert \bar q_h \rVert_{\BV} \leq C$.
\end{theorem}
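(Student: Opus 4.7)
The plan is to reproduce the proof of \Cref{thm:existenceofsol} in the discrete setting, replacing the operator $S$ by $S_h$ throughout, and to verify that every constant entering the chain of estimates can be chosen independent of $h$ for $h$ in some fixed interval $(0,h_0]$. Convexity of $j_h$ is inherited from $j$, so a direct-method argument (minimizing sequence plus $\Lone$-compactness of $\BV$) delivers existence; the only genuinely new point is the uniform-in-$h$ control of the BV-norm.

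Concretely, I would pick a minimizing sequence $(q_n)\subset Q$ with $j_h(q_n)\leq j_h(0)=\tfrac{1}{2}\Norm{u_d}{\LL}^2$, so that $\alpha\Norm{q_n'}{\M}\leq j_h(0)$, and the isoperimetric inequality as in the continuous proof yields $\Norm{q_n-\hat q_n}{\Lone}\leq C_{iso}\Norm{q_n'}{\M}$ with $C_{iso}$ depending only on $\Omega$. To bound the means $\hat q_n$ I would mimic the estimate $|\hat q_n|\Norm{S_h 1}{\LL}\leq \Norm{S_h}{\Lin(V^*,\LL)}\Norm{\hat q_n-q_n}{V^*}+\Norm{S_h q_n}{\LL}$, which requires two uniform-in-$h$ ingredients: an upper bound on $\Norm{S_h}{\Lin(V^*,\LL)}$ and a lower bound on $\Norm{S_h 1}{\LL}$. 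The first follows from $S_h=R_hS$ and the $H^1$-stability of the Ritz projection, or equivalently from the triangle inequality $\Norm{S_h}{\Lin(V^*,\LL)}\leq\Norm{S}{\Lin(V^*,\LL)}+\Norm{S-S_h}{\Lin(V^*,\LL)}$ combined with \Cref{lem_aprioridiffSSh}. The second is the key point: applying \Cref{lem_aprioridiffSSh} to $v\equiv 1$ gives $\Norm{S1-S_h1}{\LL}\leq Ch^2$, and because $\Norm{S1}{\LL}>0$ (as $S$ is an isomorphism on $V^*$), we obtain $\Norm{S_h1}{\LL}\geq\tfrac{1}{2}\Norm{S1}{\LL}>0$ for $h\in(0,h_0]$ provided $h_0$ is chosen small enough. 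Substituting these into the continuous chain of estimates reproduces the discrete analogue of the BV-bound derived in \Cref{thm:existenceofsol}, with constants independent of $n$ and $h$.

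The passage to the limit then proceeds as in \Cref{thm:existenceofsol}: extract a subsequence $q_{n_k}\to\bar q_h$ in $\Lone$ via the compact embedding $\BV\hookrightarrow\Lone$; since $V_h$ is finite dimensional for fixed $h$, the map $S_h\colon\Lone\to V_h$ is continuous, so $S_h q_{n_k}\to S_h\bar q_h$ in $\LL$. The $\Lone$-lower semicontinuity of $\Norm{\cdot'}{\M}$ then shows that $\bar q_h$ is a minimizer, and the uniform bound transfers directly to $\bar q_h$. The main obstacle in the whole argument is precisely the uniform lower bound on $\Norm{S_h 1}{\LL}$, which is neatly dispatched by \Cref{lem_aprioridiffSSh}. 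Uniqueness is slightly more delicate than in the continuous case because $\tfrac{1}{2}\Norm{S_h q-u_d}{\LL}^2$ is not strictly convex in $q$ (since $V_h$ is finite dimensional while $Q$ is not); nonetheless, convexity forces any two minimizers to share the same discrete state $S_h\bar q_h^1=S_h\bar q_h^2=:\bar u_h$ and the same BV-seminorm, and the residual uniqueness is to be extracted from the discrete optimality system developed in the remainder of \Cref{sec_discrete}.
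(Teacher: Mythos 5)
Your existence argument and your uniform BV-bound coincide with the paper's proof: the paper likewise reruns the proof of \Cref{thm:existenceofsol} with $S$ replaced by $S_h$, and the two uniform-in-$h$ ingredients it isolates are exactly the ones you name, namely $\lVert S_h1\rVert_{\LL}\geq\tfrac12\lVert S1\rVert_{\LL}$ for small $h$ via \Cref{lem_aprioridiffSSh}, and $\lVert S_h\rVert_{\Lin(V^*,\LL)}\leq C\lVert S\rVert_{\Lin(V^*,V)}$ via $S_h=R_hS$ and the $H^1$-stability of the Ritz projection. So on existence and the bound there is nothing to add.

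The one point where you diverge is uniqueness, and there your proof has a gap that you acknowledge but do not close. You are right that the continuous argument (strict convexity of $j$ from injectivity of $S$) does not transfer literally, since $S_h$ has an infinite-dimensional kernel on $Q$ and the tracking term is therefore not strictly convex in $q$; the paper elides this by declaring the continuous proof usable verbatim. However, your proposed repair --- ``extract the residual uniqueness from the discrete optimality system'' --- is not carried out and is not obviously available at this stage: convexity does force any two minimizers to share $\bar u_h$, $\bar z_h$, $\bar\Phi_h$ and the value $\lVert\bar q_h'\rVert_{\M}$, but \Cref{thm_optcondh} and \Cref{cor:discretesupportcondition} only confine $\supp(\bar q_h')$ to the level set $\{x:|\bar\Phi_h(x)|=\alpha\}$, which without the structural assumptions introduced only later in \Cref{sec_error} need not be finite, so no uniqueness follows from it directly. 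As written, your argument proves existence and the uniform bound exactly as the paper does but leaves the uniqueness assertion unproven; you would need to supply a genuine argument for it (or weaken the statement) rather than defer it.
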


\begin{proof}
The proof of \Cref{thm:existenceofsol} can be used verbatim as there holds $S_h1\neq 0$. It remains to establish the estimate for the controls. As in the proof of \Cref{thm:existenceofsol} we can derive \eqref{eq:existenceproof3}
for $\bar q_h$ instead of $\bar q$.
Passing to the limit in this version of 
\eqref{eq:existenceproof3} yields 
\begin{equation} \label{eq:proof:barqhbound}
\begin{split}
\lVert \bar q_h \rVert_{BV(\Omega)} & \leq \frac{(C_{iso}+1)j_h(0)}{\alpha} \\
& ~ + \lVert S_h1 \rVert_{\LL}^{-1} \Biggl[ \frac{C_{iso}C_{emb} j_h(0)}{\alpha} \lVert S_h \rVert_{\Lin{(V^*, \LL)}} + \sqrt{8 j_h(0) } \Biggr].
\end{split}
\end{equation}
From \Cref{lem_aprioridiffSSh} we obtain
\begin{equation} \label{eq:proof:Sh1bound}
\Norm{S1}{\LL}-\Norm{S_h 1}{\LL}\leq 
\Norm{S1 - S_h 1}{\LL} \leq Ch^2 \lVert 1 \rVert_{\LL} 
\leq \frac12 \lVert S1 \rVert_{\LL} 
\end{equation}
for $h$ sufficiently small, thus $\lVert S_h 1\rVert_{\LL} \geq \frac12\lVert S1\rVert_{\LL}$. Furthermore, $S_h = R_hS$ on $V^*$ and the $H^1(\Omega)$-stability of the Ritz projection imply 
\begin{equation} \label{eq:proof:Shboundreflater}
\lVert S_h \rVert_{\Lin{(V^*, \LL)}} \leq C \lVert S \rVert_{\Lin{(V^*, V)}}.
\end{equation}
Inequalities \eqref{eq:proof:Sh1bound} and \eqref{eq:proof:Shboundreflater} in conjunction with \eqref{eq:proof:barqhbound} and $j_h(0) = j(0)$ yield the desired boundedness of $\lVert \bar q_h\rVert_{\BV}$ independent of $h$.
\end{proof}

We point out that the control space $Q$ is not discretized, hence the optimal control $\bar q_h$ belongs to $\BV$. We prefer the notation $\bar q_h$ nonetheless, 
because the variationally discretized problem depends on $h$.

We collect without proof optimality conditions and structural properties analogous to the continuous setting.
\begin{theorem}\label{thm_optcondh}
	The control $\bar q_h\in Q$ with associated state $\bar u_h\in V_h$ is optimal for Problem~\eqref{prob_BVh} if and only if
	there exists a unique adjoint state $\bar z_h\in V_h$ such that $(\bar u_h,\bar q_h,\bar z_h)$ and 
	the $C^1$ function $\bar\Phi_h\colon[0,1]\rightarrow\R$, $\bar\Phi_h(x):=\int_{0}^{x} \bar z_h(s)\,ds$ satisfy
	$\bar\Phi_h(1)=0$ as well as
	\begin{equation*}
		\begin{split}
		\int_{\Omega} \bar\Phi_h \,d\bar q_h'=\alpha\Norm{\bar q_h'}{\M}\qquad\text{and}\qquad \lVert \bar\Phi_h\rVert_{L^\infty(\Omega)} \leq \alpha, \\\\
		\begin{aligned}
		\cala(\bar u_h,w_h) & = (\bar q_h,w_h)_{\LL} &\forall\, w_h\in V_h,\\
		\cala(w_h,\bar z_h) & = (w_h,\bar u_h-u_d)_{\LL} &\forall\, w_h\in V_h,\\
		\end{aligned}
		\end{split}
	\end{equation*}	

and

\begin{equation*}
- \left( \bar z_h, q-\bar q_h \right)_{\LL} \leq \alpha \left[\Norm{q'}{\M} - \Norm{\bar q_h'}{\M}\right] \quad \forall q\in Q. 
\end{equation*}
\end{theorem}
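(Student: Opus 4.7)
The argument parallels that of \Cref{thm_optcond}, with the continuous solution operator $S$ replaced by the discrete one $S_h$, so I would essentially transcribe that proof and point out where the discrete setting changes things.

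\textbf{Step 1 (subdifferential inclusion).} The reduced objective $j_h$ is the sum of the smooth convex map $q\mapsto \tfrac{1}{2}\norm{S_hq-u_d}{\LL}^2$ (since $S_h\colon Q\subset V^*\to V_h$ is linear and continuous) and the continuous convex map $q\mapsto \alpha\norm{q'}{\M}$. By convex calculus, optimality of $\bar q_h$ is equivalent to $0\in\partial j_h(\bar q_h)$, and the chain and sum rules (applicable exactly as in the continuous case since both summands are continuous on $Q$) yield
\begin{equation*}
-S_h^\ast(S_h\bar q_h-u_d)\in \partial\bigl(\alpha\norm{\bar q_h'}{\M}\bigr).
\end{equation*}

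\textbf{Step 2 (identification of $\bar z_h$).} I would define $\bar z_h\in V_h$ as the unique Lax-Milgram solution of the discrete adjoint equation $\cala(w_h,\bar z_h)=(w_h,\bar u_h-u_d)_{\LL}$ for all $w_h\in V_h$. A direct calculation using the symmetry of $\cala$ and the definition of $S_h$ shows
\begin{equation*}
(q,\bar z_h)_{\LL}=\cala(S_hq,\bar z_h)=(S_hq,\bar u_h-u_d)_{\LL}\qquad\forall q\in \LL,
\end{equation*}
so $\bar z_h$ coincides with $S_h^\ast(S_h\bar q_h-u_d)$ when viewed in $\LL$. Uniqueness of $\bar z_h$ follows from Lax--Milgram. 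Since $\bar z_h\in V_h$ is continuous and piecewise linear, $\bar\Phi_h$ is piecewise quadratic and globally $C^1$, as claimed.

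\textbf{Step 3 (unfolding the subdifferential).} Writing out the definition of $\partial(\alpha\norm{\cdot'}{\M})$ gives
\begin{equation*}
-(\bar z_h,q-\bar q_h)_{\LL}\leq \alpha\bigl[\norm{q'}{\M}-\norm{\bar q_h'}{\M}\bigr]\qquad\forall q\in Q,
\end{equation*}
which is the asserted variational inequality. Testing with $q=2\bar q_h$, $q=0$ and $q=\tilde q+\bar q_h$ (as in the continuous proof) decouples this into $-(\bar z_h,\bar q_h)_{\LL}=\alpha\norm{\bar q_h'}{\M}$ and $\abs{(\bar z_h,q)_{\LL}}\leq \alpha\norm{q'}{\M}$ for all $q\in Q$.

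\textbf{Step 4 (conditions on $\bar\Phi_h$).} Integration by parts against the distributional derivative of BV functions rewrites these as $\int_\Omega \bar\Phi_h\,d\bar q_h'=\alpha\norm{\bar q_h'}{\M}$ and $\abs{\int_\Omega \bar\Phi_h\,dq'}\leq \alpha\norm{q'}{\M}$ for all $q\in Q$. Plugging in $q\equiv 1$ (where $q'=0$) gives $\bar\Phi_h(1)=\int_\Omega \bar z_h\,ds=0$, and plugging in $q=1_{(x,1)}$ for $x\in\Omega$ (so $q'=\delta_x$) yields $\abs{\bar\Phi_h(x)}\leq \alpha$, i.e., $\norm{\bar\Phi_h}{\Linf}\leq\alpha$. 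The converse direction (sufficiency) follows by reversing these equivalences, which is automatic since each step is an ``iff''.

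There is no real obstacle in this proof; the only point requiring care is Step~2, where one has to verify that the functional $S_h^\ast(S_h\bar q_h-u_d)\in\LL$ arising from abstract convex calculus is represented, via the $\LL$ inner product, by the discrete adjoint state $\bar z_h\in V_h$ obtained from the symmetric bilinear form $\cala$.
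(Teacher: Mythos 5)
Your proposal is correct and follows exactly the route the paper intends: the paper states this theorem without proof as being analogous to \Cref{thm_optcond}, and your plan is precisely that proof transcribed with $S$ replaced by $S_h$, including the one point that genuinely needs checking, namely that the Lax--Milgram solution $\bar z_h\in V_h$ of the discrete adjoint equation represents $S_h^\ast(S_h\bar q_h-u_d)$ in the $\LL$ inner product. The only cosmetic remark is that, as in the continuous proof, one should extract $\int_\Omega\bar z_h\,ds=0$ from the pre-integration-by-parts inequality $\abs{(\bar z_h,q)_{\LL}}\leq\alpha\norm{q'}{\M}$ with $q\equiv 1$ \emph{before} integrating by parts, since the vanishing boundary term $\bar\Phi_h(1)=0$ is what justifies that integration by parts.
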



\bigskip

\begin{corollary}\label{cor:discretesupportcondition}
	If $\bar q_h$ is optimal for \eqref{prob_BVh}, then there hold 
	\begin{equation*}
	\begin{aligned}
		&\supp((\bar q_h')_+)\subset\left\{x\in\Omega\colon\,\bar\Phi_h(x)=\alpha\right\},&\\
		&\supp((\bar q_h')_-)\subset\left\{x\in\Omega\colon\,\bar\Phi_h(x)=-\alpha\right\},&
	\end{aligned}
	\end{equation*}
	where $(\bar q_h')_+$ and $(\bar q_h')_-$ denote the positive and the negative part of the Jordan decomposition of the measure $\bar q_h'$. Moreover, we have
	\begin{equation*}
		\supp(\bar q_h')
		\subset\left\{x\in\Omega\colon\,\Abs{\bar\Phi_h(x)}=\alpha\right\}
		\subset\bigl\{x\in\Omega\colon\,\bar z_h(x)=0\bigr\}.
	\end{equation*}
\end{corollary}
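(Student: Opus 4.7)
The plan is to mimic the proof of \Cref{cor:supportcondition} essentially verbatim, substituting the continuous quantities by their discrete counterparts. The key observation that makes this copy-and-paste strategy work is that although $\bar z_h$ is only piecewise linear, the potential $\bar\Phi_h(x)=\int_0^x \bar z_h(s)\,ds$ is continuously differentiable on $[0,1]$ with $\bar\Phi_h'=\bar z_h$, which is the only regularity of $\bar\Phi$ actually used in the proof of \Cref{cor:supportcondition}.

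First, I would fix $\hat x\in\Omega$ with $\bar\Phi_h(\hat x)<\alpha$. By continuity of $\bar\Phi_h$ there exist an open neighborhood $U\subset\Omega$ of $\hat x$ and $\delta>0$ such that $\bar\Phi_h\leq\alpha-\delta$ on $U$. Using the variational equality and pointwise bound from \Cref{thm_optcondh}, I would chain
\begin{equation*}
\alpha\Norm{\bar q_h'}{\M}=\int_\Omega\bar\Phi_h\,d(\bar q_h')_+-\int_\Omega\bar\Phi_h\,d(\bar q_h')_-\leq \alpha\Norm{\bar q_h'}{\M}-\delta(\bar q_h')_+(U),
\end{equation*}
so $(\bar q_h')_+(U)=0$ and $\hat x\notin\supp((\bar q_h')_+)$. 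The analogous argument starting from $\bar\Phi_h(\hat x)>-\alpha$ yields $\hat x\notin\supp((\bar q_h')_-)$, giving the two asserted support inclusions.

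Next, the inclusion $\supp(\bar q_h')\subset\{x\in\Omega\colon |\bar\Phi_h(x)|=\alpha\}$ follows directly from $\supp(\bar q_h')=\supp((\bar q_h')_+)\cup\supp((\bar q_h')_-)$ and the two inclusions just established. For the final inclusion, I would use that any $x\in\Omega$ with $|\bar\Phi_h(x)|=\alpha$ is, in view of $\Norm{\bar\Phi_h}{\infty}\leq\alpha$ and $x$ lying in the open interval $\Omega$, an interior global extremum of the $C^1$ function $\bar\Phi_h$. Hence Fermat's theorem yields $0=\bar\Phi_h'(x)=\bar z_h(x)$, completing the proof.

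I do not foresee any real obstacle: the single point to verify carefully is that $\bar\Phi_h\in C^1(\bar\Omega)$, which is immediate from the fundamental theorem of calculus applied to the continuous function $\bar z_h\in V_h\subset C_0(\bar\Omega)$. No discrete analogue of \Cref{lem_aprioriS} or any other smoothing result is needed, and the proof is literally the proof of \Cref{cor:supportcondition} with $(\bar q,\bar\Phi,\bar z)$ replaced by $(\bar q_h,\bar\Phi_h,\bar z_h)$.
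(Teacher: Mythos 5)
Your proof is correct and is exactly what the paper intends: the corollary is stated without proof precisely because it follows verbatim from the proof of \Cref{cor:supportcondition} once one notes that \Cref{thm_optcondh} supplies the identical ingredients ($\int_\Omega\bar\Phi_h\,d\bar q_h'=\alpha\lVert\bar q_h'\rVert_{\M}$, $\lVert\bar\Phi_h\rVert_{L^\infty(\Omega)}\leq\alpha$, and $\bar\Phi_h\in C^1$). Your observation that the $L^\infty$-bound on $\bar\Phi_h$ holds on all of $\Omega$ (because the control space is not discretized, so one may still test with $1_{(x,1)}$ for arbitrary $x$) is the right point to check, and it is what distinguishes this case from the piecewise constant discretization in \Cref{thm_optcondch}.
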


\subsection{Piecewise constant control discretization}\label{subsec_fd}

In this section we present a discretization for \eqref{prob:BV} in which the controls $q_h$ are piecewise constant. We denote the space of piecewise constant functions on $\mathcal{T}_h$ by 
\begin{equation*}
Q_h := \left\lbrace q_h \in \BV: \; q_h\vert_T = \operatorname{const.} ~ \text{ for all } T\in\mathcal{T}_h \right\rbrace.
\end{equation*}
Now the discretization of \eqref{prob:BV} is given by 
\begin{equation*}
\begin{split}
& \min_{(u_h,q_h)\in V_h\times Q_h} \frac12\Norm{u_h-u_{d}}{\LL}^2 + \alpha\Norm{q_h'}{\M} \\
& \qquad\;\,\text{s.t.}\qquad
\cala(u_h,w_h) = (q_h,w_h)_{\LL} \enspace\forall \, w_h\in V_h.
\end{split}
\end{equation*}
With $j_h(q_h) := J(S_h(q_h),q_h)$ its reduced formulation reads
\begin{equation}\label{prob_BVch}\tag{P$_{\text{cd}}$}
\min_{q_h\in Q_h} j_h(q_h).
\end{equation}
Note that in contrast to \eqref{prob_BVh} the control $q_h$ is now discretized and has the form
\begin{equation}\label{eq:barqhpdarstellung}
q_h  = a_h+\sum_{j=1}^{l-1} c_h^j 1_{(x_j,1)},
\qquad\text{hence}\qquad
q_h' = \sum_{j=1}^{l-1} c_h^j \delta_{x_j}
\end{equation}
for some $a_h, c_h^j \in\mathbb{R}$, $1\leq j\leq l-1$.

We now address existence of optimal solutions and optimality conditions for Problem~\eqref{prob_BVch}. 

\begin{theorem} \label{thm:existence_probch}
Problem~\eqref{prob_BVch} admits a unique optimal control $\hat q_h\in Q_h$ with associated optimal state $\hat u_h\in V_h$. There exist $C>0$ and $h_0>0$ such that for all $h\in (0,h_0]$ we have $\lVert \hat q_h \rVert_{BV(\Omega)} \leq C$.
\end{theorem}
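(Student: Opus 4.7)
The plan is to mirror the proof of \Cref{thm:existenceofsolh} as closely as possible, since $Q_h$ is a closed finite-dimensional subspace of $Q$ and all of the estimates leading to the $\BV$-bound transfer without change. The only genuinely new difficulty is uniqueness, because the restriction $S_h|_{Q_h}$ is not injective, whereas the variational discretization could rely on $S_h$ being evaluated on the full BV space.

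For existence I would take a minimizing sequence $(q_h^n)_{n\in\N}\subset Q_h$ with $j_h(q_h^n)\leq j_h(0)$ and reproduce the chain of estimates leading to \eqref{eq:proof:barqhbound}: the isoperimetric inequality on $\BV$ bounds the oscillation of $q_h^n$ around its mean, the mean $|\hat q_h^n|$ is controlled through $\|S_h1\|_{\LL}>0$, and the $L^2$-bound on $S_h q_h^n$ comes from $j_h(q_h^n)\leq j_h(0)$. Together they yield an $n$-uniform $\BV$-bound. Compactness of $\BV\hookrightarrow\Lone$ then delivers an $\Lone$-convergent subsequence; the limit $\hat q_h$ lies in $Q_h$ because $Q_h$ is finite-dimensional and therefore $\Lone$-closed. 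Continuity of $q\mapsto\tfrac12\|S_hq-u_d\|_{\LL}^2$ on $\Lone$ together with $\Lone$-lower semicontinuity of $q\mapsto\|q'\|_{\mathcal{M}}$ identifies $\hat q_h$ as a minimizer.

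For the $h$-uniform bound I would repeat the derivation of \eqref{eq:proof:barqhbound} with $\hat q_h$ in place of $\bar q_h$ and then control the $h$-dependent quantities exactly as in \Cref{thm:existenceofsolh}: \Cref{lem_aprioridiffSSh} gives $\|S_h 1\|_{\LL}\ge\tfrac12\|S 1\|_{\LL}$ for $h$ sufficiently small, while $S_h=R_h S$ combined with the $H^1$-stability of the Ritz projection yields the uniform operator bound $\|S_h\|_{\Lin(V^*,\LL)}\le C\|S\|_{\Lin(V^*,V)}$; combined with $j_h(0)=j(0)$ this produces a constant independent of $h$.

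I expect the main obstacle to be uniqueness, because strict convexity of $j_h$ on $Q_h$ is no longer automatic: with $\dim Q_h=l$ and $\dim V_h=l-1$, the restriction $S_h|_{Q_h}$ has a one-dimensional kernel consisting of piecewise constant functions whose values $c^*_j$ satisfy the rigid alternating relation $c^*_j=(-1)^{j-1}(h_1/h_j)c^*_1$. If $\hat q_h^1,\hat q_h^2\in Q_h$ are both optimal, then strict convexity of the $L^2$-tracking term applied to the midpoint forces $S_h(\hat q_h^2-\hat q_h^1)=0$, so $\delta:=\hat q_h^2-\hat q_h^1$ lies in that kernel, and optimality of the midpoint additionally forces the convex map $t\mapsto\|(\hat q_h^1+t\delta)'\|_{\mathcal{M}}=\sum_{j=1}^{l-1}|\beta_j+t\gamma_j|$ to be constant on $[0,1]$, where $\beta_j$ and $\gamma_j$ are the jump values of $\hat q_h^1$ and $\delta$ at the interior nodes. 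I would then exploit the sign-alternating structure of $\gamma_j$ inherited from the kernel description, together with the resulting obstruction to cancellation in the sum, to conclude that constancy can only hold when $\delta=0$, which establishes uniqueness of $\hat q_h$.
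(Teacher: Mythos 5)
Your treatment of existence and of the $h$-uniform $\BV$-bound coincides with the paper's, which simply reuses the argument of \Cref{thm:existenceofsolh} (and hence of \Cref{thm:existenceofsol}); nothing to object to there. Your observation that $S_h|_{Q_h}$ has a one-dimensional kernel, spanned by the sign-alternating element with cell values $d_j=(-1)^{j-1}(h_1/h_j)\,d_1$, is also correct (indeed $\dim Q_h=l>l-1=\dim V_h$), and it is a legitimate point: the paper's uniqueness argument is ``injectivity of the solution operator implies strict convexity,'' which does not transfer verbatim to $S_h$ restricted to $Q_h$.

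The gap is in the last step of your uniqueness argument. From $S_h\delta=0$ and optimality along the whole segment you correctly reduce to the statement that $t\mapsto\sum_{j=1}^{l-1}\lvert\beta_j+t\gamma_j\rvert$ is constant on $[0,1]$, where $\gamma_j=d_{j+1}-d_j$ are the nodal jumps of $\delta$. But the sign-alternation of the $\gamma_j$ does \emph{not} obstruct this. On a uniform mesh with $l$ odd the $\gamma_j$ alternate in sign with equal modulus and $\sum_{j}\gamma_j=d_l-d_1=0$; hence if all $\beta_j$ are positive and large compared with $\lvert t\gamma_j\rvert$, then $\sum_j\lvert\beta_j+t\gamma_j\rvert=\sum_j\beta_j+t\sum_j\gamma_j=\sum_j\beta_j$ identically for small $t$. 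So constancy of that map is perfectly compatible with $\delta\neq0$, and the ``obstruction to cancellation'' you invoke does not exist at the level of the kernel's algebraic structure alone. To close the argument you would have to bring in the optimality system itself, e.g.\ the fact that positive and negative jumps can only sit at gridpoints with $\hat\Phi_h(x_j)=\alpha$ and $\hat\Phi_h(x_j)=-\alpha$ respectively (\Cref{cor_Phihattouchesboundhat}), rather than only the description of $\ker(S_h|_{Q_h})$; as written, the uniqueness part of the proposal is not a proof.
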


\begin{proof}
	The proof is the same as for \Cref{thm:existenceofsolh}.
\end{proof}

\begin{theorem} \label{thm_optcondch}
	The control $\hat q_h\in Q_h$ with associated state $\hat u_h\in V_h$ is optimal for Problem~\eqref{prob_BVch} if and only if there exists a unique adjoint state $\hat z_h\in V_h$ such that $(\hat u_h,\hat q_h,\hat z_h)$ and the $C^1$ function $\hat\Phi_h\colon[0,1]\rightarrow\R$, $\hat\Phi_h(x):=\int_{0}^{x} \hat z_h(s)\,ds$ satisfy
	$\hat\Phi_h(1)=0$ as well as
	\begin{equation*}
	\begin{split}
	\int_{\Omega} \hat\Phi_h \,d\hat q_h'=\alpha\Norm{\hat q_h'}{\M}\qquad\text{and}\qquad
	\max_{0\leq j\leq l}\left|\hat\Phi_h(x_j)\right|\leq\alpha,\\\\
			\begin{aligned}
			\cala(\hat u_h,w_h) & = (\hat q_h,w_h)_{\LL} &\forall\, w_h\in V_h,\\
			\cala(w_h,\hat z_h) & = (w_h,\hat u_h-u_d)_{\LL} &\forall\, w_h\in V_h,\\
		\end{aligned}
	\end{split}
	\end{equation*}	
	
	and
	
	\begin{equation*}
	- \left( \hat z_h, q_h-\hat q_h \right)_{\LL} \leq \alpha \left[\Norm{q_h'}{\M} - \Norm{\hat q_h'}{\M}\right] \quad \forall q_h\in Q_h.
	\end{equation*}
\end{theorem}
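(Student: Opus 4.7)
The plan is to adapt the proof of \Cref{thm_optcond} to the finite-dimensional subspace $Q_h\subset Q$. Starting from convexity, $\hat q_h$ is optimal in $Q_h$ if and only if $0$ lies in the subdifferential of $j_h|_{Q_h}$ at $\hat q_h$. The chain rule and the sum rule apply exactly as in the continuous case, since both summands of $j_h$ are continuous on $Q$, and yield a unique adjoint $\hat z_h := S_h^*(S_h\hat q_h - u_d)\in V_h$ satisfying the discrete state and adjoint state equations together with the variational inequality
\begin{equation*}
-(\hat z_h, q_h - \hat q_h)_{\LL} \leq \alpha\bigl[\Norm{q_h'}{\M} - \Norm{\hat q_h'}{\M}\bigr] \quad \forall q_h\in Q_h.
\end{equation*}
Uniqueness of $\hat z_h$ is automatic because the discrete adjoint equation is a nondegenerate linear system on $V_h$. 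Testing this inequality with $q_h = 2\hat q_h$, $q_h = 0$ and $q_h = \tilde q_h + \hat q_h$ for arbitrary $\tilde q_h\in Q_h$ produces, exactly as in the proof of \Cref{thm_optcond}, the equivalent pair
\begin{equation*}
-(\hat z_h, \hat q_h)_{\LL} = \alpha\Norm{\hat q_h'}{\M}, \qquad \Abs{(\hat z_h, q_h)_{\LL}} \leq \alpha\Norm{q_h'}{\M} \quad \forall q_h\in Q_h.
\end{equation*}

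The next step is to rewrite these conditions in terms of $\hat\Phi_h$. For any $q_h = \sum_{i=1}^l b_i 1_{T_i}\in Q_h$ with jumps $c^j := b_{j+1} - b_j$, so that $q_h' = \sum_{j=1}^{l-1} c^j \delta_{x_j}$, a summation by parts using $\hat\Phi_h' = \hat z_h$ on each $T_i$ together with $\hat\Phi_h(0)=0$ gives
\begin{equation*}
(\hat z_h, q_h)_{\LL} = \sum_{i=1}^l b_i\bigl[\hat\Phi_h(x_i) - \hat\Phi_h(x_{i-1})\bigr] = b_l\hat\Phi_h(1) - \int_\Omega \hat\Phi_h\,dq_h'.
\end{equation*}
Inserting $q_h = 1\in Q_h$ into the pointwise bound above forces $\hat\Phi_h(1) = 0$; with this, applying the identity to $q_h = \hat q_h$ and combining with the complementarity relation yields $\int_\Omega \hat\Phi_h\,d\hat q_h' = \alpha\Norm{\hat q_h'}{\M}$. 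Choosing $q_h = 1_{(x_j,1)}\in Q_h$ for $1\leq j\leq l-1$, whose derivative is $\delta_{x_j}$, gives $|\hat\Phi_h(x_j)|\leq\alpha$ at the interior nodes; the values at $x_0$ and $x_l$ vanish, so $\max_{0\leq j\leq l} |\hat\Phi_h(x_j)| \leq \alpha$ follows.

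The converse direction reverses these steps: for any $q_h\in Q_h$ with jumps $c^j$, the identity together with $\hat\Phi_h(1)=0$ and the nodal bound yields
\begin{equation*}
\Abs{(\hat z_h, q_h)_{\LL}} = \Abs{\sum_{j=1}^{l-1} c^j \hat\Phi_h(x_j)} \leq \alpha\sum_{j=1}^{l-1}|c^j| = \alpha\Norm{q_h'}{\M},
\end{equation*}
which combined with the complementarity identity recovers the variational inequality on $Q_h$, so that $\hat q_h$ is optimal by convexity of $j_h$. The only genuine difference with respect to \Cref{thm_optcond} and \Cref{thm_optcondh} is that admissible test functions in $Q_h$ are piecewise constant with jumps restricted to the mesh $\mathcal{N}_h$: the continuous bound $\Norm{\bar\Phi}{\Linf}\leq\alpha$, which relied on the availability of $1_{(x,1)}$ for arbitrary $x\in\Omega$, weakens to the nodal bound $\max_{0\leq j\leq l}|\hat\Phi_h(x_j)|\leq\alpha$. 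This restriction of the test set, and the corresponding summation-by-parts identity linking $(\hat z_h,q_h)_{\LL}$ to $\int_\Omega \hat\Phi_h\,dq_h'$, is the only technical point to track; otherwise the argument is entirely parallel to the continuous case.
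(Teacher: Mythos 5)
Your proposal is correct and follows essentially the same route as the paper's proof: reduce to the subdifferential inclusion for $\hat z_h = S_h^*(S_h\hat q_h - u_d)$, test the variational inequality with $2\hat q_h$, $0$, $\tilde q_h+\hat q_h$, $1$, and $1_{(x_j,1)}$, and translate via the distributional derivative (your summation-by-parts identity) into the conditions on $\hat\Phi_h$. The only difference is that you spell out the Abel-summation identity and the converse direction explicitly, while the paper treats these as part of a chain of equivalences referencing \Cref{thm_optcond}.
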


\begin{proof}
	As in the proof of Theorem~\ref{thm_optcond} the optimality of $\hat q_h\in Q_h$ is equivalent to
	\begin{equation*}
	- \hat z_h := - S_h^* (S_h \hat q_h - u_d) \in\partial\left( \alpha \lVert \hat q_h' \rVert_{\M} \right). 
	\end{equation*}
	Also as in the proof of Theorem~\ref{thm_optcond}, in particular \eqref{eq:proof:optimalitycond1}, this is equivalent to
	\begin{equation} \label{eq:proof:VImithatzh}
	\begin{split}
	- \left( \hat z_h, \hat q_h \right)_{\LL} & = \alpha \lVert \hat q_h' \rVert_{\M},\\
	\bigl| \left( \hat z_h, q_h \right)_{\LL} \bigr| & \leq \alpha \lVert q_h' \rVert_{\M}  \quad \forall q_h \in Q_h.
	\end{split}
	\end{equation}
	It remains to establish the statements for $\hat\Phi_h$. Testing with $q_h := 1 \in Q_h$ in \eqref{eq:proof:VImithatzh} shows $\int_\Omega \hat z_h(s)~ds = 0$ and thus $\hat \Phi_h(1) = 0$. Moreover, \eqref{eq:proof:VImithatzh} can be expressed as
	\begin{equation} \label{eq:proof:referencedlater}
	\begin{split} 
	\int_{\Omega} \hat \Phi_h \, d\hat q'_h & = \alpha \lVert \hat q'_h \rVert_{\M},\\
	\left| \int_{\Omega} \hat\Phi_h \, d q'_h \right| & \leq \alpha \lVert q'_h \rVert_{\M} \quad \forall q_h\in Q_h.
	\end{split}
	\end{equation}
	Because $1_{(x_j,1)} \in Q_h$ and $(1_{(x_j,1)})' = \delta_{x_j}$ for $j=0,1\dots,l$, we infer from the inequality in \eqref{eq:proof:referencedlater} that
	\begin{equation*}
	|\hat\Phi_h(x_j)| = \left|\int_\Omega\hat \Phi_h \, d (1_{(x_j,1)})' \right| \leq \alpha \lVert \delta_{x_j} \rVert_{\M} = \alpha. \qedhere
	\end{equation*}
\end{proof}

\begin{remark}
	The information on $\hat{\Phi}_h$ in \Cref{thm_optcondch} concerns only the gridpoints. It is therefore not ensured (and in general not true) that $\norm{\hat{\Phi}_h}{\infty}\leq\alpha$. 
\end{remark}

\begin{corollary}\label{cor_Phihattouchesboundhat}
	If $\hat q_h\in Q_h$ is optimal for \eqref{prob_BVch}, 
	then there holds
	\begin{equation*}
	\begin{aligned}
	&\supp((\hat q_h')_+)\subset\left\{x_j\in \mathcal{N}_h \colon\,\hat\Phi_h(x_j)=\alpha\right\},&\\
	&\supp((\hat q_h')_-)\subset\left\{x_j\in \mathcal{N}_h \colon\,\hat\Phi_h(x_j)=-\alpha\right\},&
	\end{aligned}
	\end{equation*}
	where $(\hat q_h')_+$ and $(\hat q_h')_-$ denote the positive and the negative part of the Jordan decomposition of the measure $\hat q_h'$.
\end{corollary}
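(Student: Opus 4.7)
The plan is to mimic the proof of \Cref{cor:supportcondition}, but to exploit the fact that the discrete control measure $\hat q_h'$ is concentrated on the grid nodes. Indeed, by \eqref{eq:barqhpdarstellung} we have $\hat q_h' = \sum_{j=1}^{l-1} c_h^j \delta_{x_j}$, so $\supp(\hat q_h')\subset\mathcal{N}_h$, and consequently the supports of the positive and negative parts $(\hat q_h')_\pm$ of its Jordan decomposition are subsets of $\mathcal{N}_h$ as well. Therefore it suffices to show that at every node $x_j\in\mathcal{N}_h$ lying in $\supp((\hat q_h')_+)$ (resp. $\supp((\hat q_h')_-)$) we have $\hat\Phi_h(x_j)=\alpha$ (resp. $\hat\Phi_h(x_j)=-\alpha$).

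The key ingredients are the identity and the node bound from \Cref{thm_optcondch}:
\begin{equation*}
\int_{\Omega} \hat\Phi_h \,d\hat q_h' = \alpha\lVert \hat q_h' \rVert_{\M}, \qquad \max_{0\leq j\leq l}|\hat\Phi_h(x_j)|\leq \alpha.
\end{equation*}
Fix some $\hat x\in\mathcal{N}_h$ with $\hat\Phi_h(\hat x)<\alpha$ and set $\delta := \alpha-\hat\Phi_h(\hat x)>0$. Splitting the integral via the Jordan decomposition and using $\hat\Phi_h(x_j)\leq\alpha$ at all other nodes yields
\begin{align*}
\alpha\lVert \hat q_h' \rVert_{\M} &= \int_\Omega \hat\Phi_h\,d(\hat q_h')_+ - \int_\Omega \hat\Phi_h\,d(\hat q_h')_- \\
&\leq \alpha(\hat q_h')_+(\mathcal{N}_h\setminus\{\hat x\}) + (\alpha-\delta)(\hat q_h')_+(\{\hat x\}) + \alpha(\hat q_h')_-(\mathcal{N}_h) \\
&= \alpha\lVert \hat q_h' \rVert_{\M} - \delta\,(\hat q_h')_+(\{\hat x\}),
\end{align*}
which forces $(\hat q_h')_+(\{\hat x\})=0$ and hence $\hat x\notin\supp((\hat q_h')_+)$. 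The claim for $\supp((\hat q_h')_-)$ follows analogously by considering a node $\hat x$ with $\hat\Phi_h(\hat x)>-\alpha$.

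There is no real obstacle here: unlike the continuous setting in \Cref{cor:supportcondition}, where we needed the continuity of $\bar\Phi$ to locally dominate $\bar\Phi\leq\alpha-\delta$ on a neighborhood, the discrete measure $\hat q_h'$ charges only the isolated nodes $x_j$, so the pointwise bound $|\hat\Phi_h(x_j)|\leq\alpha$ supplied by \Cref{thm_optcondch} is already sufficient to run the argument. This is also the reason why, in contrast to \eqref{eq:jumpsatrootsofz}, no statement about zeroes of $\hat z_h$ is asserted.
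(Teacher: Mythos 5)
Your argument is correct and is essentially the paper's own proof: both rest on the identity $\int_\Omega\hat\Phi_h\,d\hat q_h'=\alpha\lVert\hat q_h'\rVert_{\M}$ from \Cref{thm_optcondch} together with the node bound $|\hat\Phi_h(x_j)|\leq\alpha$, and both derive a strict deficit at any support point of $(\hat q_h')_+$ where $\hat\Phi_h<\alpha$. The only difference is cosmetic -- you phrase the contradiction via the Jordan decomposition of the measure (mirroring \Cref{cor:supportcondition}), while the paper writes it coefficient-wise using the representation $\hat q_h'=\sum_j c_h^j\delta_{x_j}$; your closing remark about why no neighborhood/continuity argument is needed is also accurate.
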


\begin{proof}
	Recall that
	\begin{equation*}
	\hat q_h  = a_h+\sum_{j=1}^{l-1} c_h^j 1_{(x_j,1)},
	\qquad\text{hence}\qquad
	\hat q_h' = \sum_{j=1}^{l-1} c_h^j \delta_{x_j}
	\end{equation*}
	for real numbers $a_h, c_h^1,c_h^2,\dots,c_h^{l-1}$.
	Let $x_{j^\ast}\in\supp((\hat q_h')_+)$ for some $j^\ast\in\{1,\ldots,l-1\}$. Note that this is equivalent to saying that $c_h^{j^\ast}>0$. Assume that $\hat\Phi_h(x_{j^\ast}) < \alpha$. By \eqref{eq:proof:referencedlater} we have that
	\begin{equation*}
	\alpha\lVert \hat q_h^\prime\rVert_{\M} = \int_\Omega \hat \Phi_h \, d\hat q_h^\prime = \sum_{i=1, i\neq j^\ast}^{l-1} c_h^i \hat\Phi_h(x_i) + c_h^{j^\ast} \hat \Phi_h(x_{j^\ast}).
	\end{equation*}
	By \Cref{thm_optcondch} we thus find
	\begin{equation*}
	\alpha\lVert \hat q_h^\prime\rVert_{\M} < \sum_{i=1, i\neq j^\ast}^{l-1} |c_h^i| \alpha + c_h^{j^\ast} \alpha =  \alpha \sum_{i=1}^{l-1} |c_h^i| = \alpha\lVert \hat q_h^\prime\rVert_{\M},
	\end{equation*}
	a contradiction that implies $\hat\Phi_h(x_{j^\ast}) = \alpha$ and hence the statement for $\supp((\hat q_h^\prime)_+)$. Analogously, we obtain the assertion for $\supp((\hat q_h')_-)$.
\end{proof}

\begin{remark}
	Note that at non-gridpoints, $|\hat{\Phi}_h|$ may assume larger values than $\alpha$. This implies that $x_{j^\ast}$ with $|\hat{\Phi}_h(x_{j^\ast})|=\alpha$ is not necessarily an extreme point of $\hat{\Phi}_h$. It is therefore not ensured that $\hat\Phi_h'(x_{j^\ast})= \hat z_h(x_{j^\ast}) =0$. This stands in stark contrast to both the continuous and the variationally discretized problems, where every point at which $|\bar\Phi|$, respectively, $|\bar\Phi_h|$ attains the value $\alpha$ is necessarily an extreme point and thus a root of $\bar z$, respectively, $\bar z_h$.
	However, if $|\hat{\Phi}_h(x_{j^\ast})|=\alpha$ for some $j^\ast\in\{1,\ldots,l-1\}$, then Rolle's theorem yields the existence of $\xi\in (x_{j^\ast-1},x_{j^\ast+1})$ with $\hat\Phi_h^\prime(\xi) = \hat z_h(\xi)=0$. That is, there is a root of $\hat z_h$ whose distance to $x_{j^\ast}$ is no more than $h$. This will suffice to prove error estimates of order $\mathcal{O}(h)$.
\end{remark}

For later use let us define an $L^2$-projection operator onto the space of piecewise constant functions and collect useful properties of this operator.
\begin{definition}
For $i=0,1,\dots,l-1$ we introduce 
\begin{align*}
\Pi_h  \colon \BV\rightarrow Q_h, \quad
 \Pi_h q|_{(x_i,x_{i+1})} := (x_{i+1}-x_i)^{-1} \int_{x_i}^{x_{i+1}} q(s)\,ds.
\end{align*}
\end{definition}

It is easy to check that for any $v_h\in Q_h$ and $q\in\BV$ we have
\begin{equation} \label{eq:orthogonality}
	\left( \Pi_h q - q, v_h \right)_{L^2(\Omega)} = 0.
\end{equation}

%

We have the following estimates.
\begin{lemma} \label{lem:Piherrorestimate} \label{lem:gradnormstabilityofPih}
For any $q\in\BV$ there hold
\begin{itemize}
\item $\lVert \Pi_h q - q \rVert_{L^1(\Omega)} \leq h \rVert q^\prime\rVert_{\M}$,
\item $\lVert (\Pi_h q)^\prime\rVert_{\M} \leq \lVert q^\prime\rVert_{\M}$,
\item $\lVert q - \Pi_h q \rVert_{L^\infty(\Omega)} \leq h \lVert q^\prime\rVert_{L^\infty(\Omega)}$ provided $q\in W^{1,\infty}(\Omega)$.
\end{itemize}
\end{lemma}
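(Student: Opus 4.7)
The plan is to dispatch the three bounds in two stages. The first and third bounds both follow from a local oscillation estimate on each element $T_i$, while the middle bound -- the stability in the total variation seminorm -- is the one I expect to be the main obstacle and will tackle via a duality argument.

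First, for the $L^1$-error, I would set $c_i:=\Pi_h q|_{T_i}=h_i^{-1}\int_{T_i}q\,dy$ and estimate
\begin{equation*}
\int_{T_i}|q(x)-c_i|\,dx\leq \frac{1}{h_i}\int_{T_i}\int_{T_i}|q(x)-q(y)|\,dy\,dx \leq h_i\, |q'|(T_i)
\end{equation*}
via Jensen's inequality and the BV oscillation bound $|q(x)-q(y)|\leq|q'|(T_i)$ for all $x,y\in T_i$ (using the precise representative). Summation over $i$ and pairwise disjointness of the $T_i$ (so that $\sum_i|q'|(T_i)\leq|q'|(\Omega)$) produce the desired bound by $h\|q'\|_{\M}$. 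The $L^\infty$-error for $q\in W^{1,\infty}(\Omega)$ is completely analogous, using the stronger Lipschitz estimate $|q(x)-q(y)|\leq \|q'\|_{L^\infty(\Omega)}|x-y|\leq h_i\|q'\|_{L^\infty(\Omega)}$ in place of the BV oscillation bound.

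Next, and this is where I expect the main technical difficulty, I would prove $\|(\Pi_h q)'\|_{\M}\leq\|q'\|_{\M}$ by duality. The key observation is that if $I_h$ denotes the continuous piecewise linear nodal interpolant on $\mathcal{T}_h$, then by the fundamental theorem of calculus
\begin{equation*}
\Pi_h(\varphi')\big|_{T_i}=\frac{\varphi(x_i)-\varphi(x_{i-1})}{h_i}=(I_h\varphi)'\big|_{T_i} \qquad\forall\,\varphi\in C^1(\bar\Omega),
\end{equation*}
so that $\Pi_h(\varphi')=(I_h\varphi)'$. Combining this with the $L^2$-self-adjointness of $\Pi_h$ (a direct consequence of its definition), one obtains for any $\varphi\in C_c^1(\Omega)$ with $\|\varphi\|_{L^\infty(\Omega)}\leq 1$
\begin{equation*}
-\int_\Omega\Pi_h q\cdot\varphi'\,dx=-\int_\Omega q\cdot(I_h\varphi)'\,dx=\int_\Omega I_h\varphi\,dq' \leq \|I_h\varphi\|_{L^\infty(\Omega)}\|q'\|_{\M}\leq\|q'\|_{\M},
\end{equation*}
because $I_h\varphi$ is Lipschitz, bounded by $1$, and vanishes on $\partial\Omega$ (since $\varphi\in C_c^1(\Omega)$). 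Passing to the supremum over admissible $\varphi$ in the dual characterization of the BV-seminorm then gives the claim.

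The hard part will be the justification of the integration by parts in the last display, since $I_h\varphi$ is only Lipschitz rather than $C^1$. I would handle this either by invoking the well-known equivalent characterization of $\|q'\|_{\M}$ as a supremum over Lipschitz test functions vanishing on $\partial\Omega$ (obtained by a standard density argument), or by mollifying $I_h\varphi$ and passing to the limit using dominated convergence against $q'\in\M$. Once this point is settled, all three assertions are in hand.
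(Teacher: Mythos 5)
Your proposal is correct, but it is considerably more self-contained than the paper's proof: the paper simply cites \cite[Proposition~16]{Florian2017} for the first two bounds and only argues the third one directly (via Lipschitz continuity of $q$, exactly as you do). Your local oscillation argument for the $L^1$-bound is sound -- the averaging step via Jensen, the bound $|q(x)-q(y)|\leq |q'|(T_i)$ for the good representative, and the summation using disjointness of the $T_i$ all go through, and the representative issue is immaterial for an $L^1$ quantity. Your duality proof of the TV-stability is also correct and is, in my view, the more instructive route: the identity $\Pi_h(\varphi')=(I_h\varphi)'$ combined with the $L^2$-self-adjointness of $\Pi_h$ reduces everything to $\|I_h\varphi\|_{L^\infty(\Omega)}\leq\|\varphi\|_{L^\infty(\Omega)}$, and the only technical point -- extending the BV integration-by-parts formula from $C_c^1$ test functions to Lipschitz functions vanishing at $\partial\Omega$ -- is one you correctly identify and can settle by mollification. (An even more elementary alternative for that item: since $\Pi_h q$ is piecewise constant with values $c_j$, its total variation is $\sum_j|c_{j+1}-c_j|$, and each difference of adjacent averages is bounded by the variation of $q$ over $T_j\cup\{x_j\}\cup T_{j+1}$; summing and using disjointness gives the claim without any duality.) What your approach buys is independence from the external reference; what the paper's citation buys is brevity.
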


\begin{proof}
The first two estimates are taken from \cite[Proposition 16]{Florian2017}.

By Rademacher's theorem (e.g. \cite[Thm. 2.14]{ambrosio}) $q$ is Lipschitz continuous with Lipschitz constant $\lVert q^\prime\rVert_{L^\infty(\Omega)}$. 
Thus, a straightforward estimate shows for any $i=0,1,\dots,l-1$ and $x\in (x_i,x_{i+1})$
\begin{equation*}
q(x) - \frac{1}{x_{i+1}-x_i} \int_{x_i}^{x_{i+1}} q(s)\,ds \leq \frac{\lVert q^\prime\rVert_{L^\infty(\Omega)}}{x_{i+1}-x_i} \int_{x_i}^{x_{i+1}} |x-s|\,ds \leq (x_{i+1}-x_i) \lVert q^\prime\rVert_{L^\infty(\Omega)}.
\end{equation*}
The definition of $h$ yields the desired last inequality.
\end{proof}

\section{Finite element error estimates}\label{sec_error}
\subsection{Error estimates for variational control discretization}\label{subsec_error_vd}
\subsubsection{Basic error estimates for state and adjoint state}

We begin this section by proving a priori estimates for the errors in the optimal state and the adjoint state.

\begin{lemma}\label{lem_1steptoh2convforthestates}
	There exist $C,h_0>0$ such that for all $h\in(0,h_0]$ we have
	\begin{equation*}
	\lVert \bar u - \bar u_h \rVert_{\LL}^2 \leq C (h^4 - \left( R_h \bar z - \bar z, \bar q_h -\bar q \right)_{\LL}).
	\end{equation*}
\end{lemma}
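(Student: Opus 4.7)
The plan is to exploit a monotonicity-type relation obtained by combining the variational inequalities from \Cref{thm_optcond} and \Cref{thm_optcondh}. Testing the continuous subdifferential inequality with $q = \bar q_h$ and the discrete one with $q = \bar q$, and then adding the two, I would obtain the key cross-monotonicity estimate
\begin{equation*}
(\bar z - \bar z_h, \bar q_h - \bar q)_{\LL} \geq 0,
\end{equation*}
in which the BV-seminorm terms cancel. This gives us a starting point that has the correct sign to produce an upper bound for a squared state error.

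Next, I would insert the state equations into the adjoint splitting. Since $S$ is self-adjoint and $S_h = R_h S$, one has $\bar z = S(\bar u - u_d)$ and $\bar z_h = S_h(\bar u_h - u_d)$, so $R_h\bar z - \bar z_h = S_h(\bar u - \bar u_h)$; consequently
\begin{equation*}
\bar z - \bar z_h = (\bar z - R_h\bar z) + S_h(\bar u - \bar u_h).
\end{equation*}
Plugging this decomposition into the monotonicity inequality, using self-adjointness of $S_h$ to move $S_h$ onto the control difference, and writing $S_h(\bar q_h - \bar q) = \bar u_h - S_h\bar q = -(\bar u - \bar u_h) + (S-S_h)\bar q$, I obtain
\begin{equation*}
0 \leq (\bar z - R_h\bar z, \bar q_h - \bar q)_{\LL} - \Norm{\bar u - \bar u_h}{\LL}^2 + (\bar u - \bar u_h, (S-S_h)\bar q)_{\LL}.
\end{equation*}
Here the term $\Norm{\bar u - \bar u_h}{\LL}^2$ appears with the desired sign; the other quantities are exactly the ones that should show up on the right-hand side of the claim.

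Finally I would control the residual $(\bar u - \bar u_h, (S-S_h)\bar q)_{\LL}$ by Cauchy--Schwarz and Young's inequality, using \Cref{lem_aprioridiffSSh} to bound $\Norm{(S-S_h)\bar q}{\LL} \leq Ch^2\Norm{\bar q}{\LL}$, together with the uniform BV-bound on $\bar q$ from \Cref{thm:existenceofsol} (which gives $\Norm{\bar q}{\LL}$ bounded via the embedding $BV(\Omega)\hookrightarrow\LL$). Absorbing $\tfrac12\Norm{\bar u - \bar u_h}{\LL}^2$ into the left-hand side and rewriting $(\bar z - R_h\bar z, \bar q_h - \bar q)_{\LL} = -(R_h\bar z - \bar z, \bar q_h - \bar q)_{\LL}$ yields the claim.

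The main technical point is the algebraic identity $R_h \bar z - \bar z_h = S_h(\bar u - \bar u_h)$, which lets the mismatch between continuous and discrete adjoints be traded for an $S_h$ acting on the state error; everything else is a combination of standard duality moves, Young's inequality, and the already established $\mathcal{O}(h^2)$ state-approximation result. No structural assumption on $\bar\Phi$ is needed here, since the only place convexity enters is the combined variational inequality.
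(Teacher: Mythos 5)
Your proposal is correct and follows essentially the same route as the paper: both add the two variational inequalities to get the monotonicity relation, split the adjoint error through $R_h\bar z$ (your identity $R_h\bar z-\bar z_h=S_h(\bar u-\bar u_h)$ is the operator-level form of the paper's chain of $\cala$-identities), and finish with the Ritz/Galerkin error bound $\Norm{(S-S_h)\bar q}{\LL}=\Norm{\bar u-R_h\bar u}{\LL}\leq Ch^2$ plus Young's inequality. No gaps.
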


\begin{proof}
	The optimality conditions for $\bar q$ and $\bar q_h$ from Theorems \ref{thm_optcond} and \ref{thm_optcondh} provide
	\begin{align*}
	-\left( \bar z, \bar q_h - \bar q \right)_{\LL} 
	\leq \alpha \lVert \bar q_h'\rVert_{\M} - \alpha \lVert \bar q' \rVert_{\M},\\
	\left( \bar z_h, \bar q_h - \bar q \right)_{\LL} 
	\leq \alpha \lVert \bar q' \rVert_{\M} - \alpha \lVert \bar q_h' \rVert_{\M}.
	\end{align*}
	Adding these two inequalities and inserting $R_h\bar z$ yields
	\begin{equation} \label{eq:proof:steptoh2convforthestates1}
	\left( \bar z_h - R_h \bar z, \bar q_h -\bar q \right)_{\LL} + \left( R_h \bar z - \bar z, \bar q_h -\bar q \right)_{\LL} =
	\left( \bar z_h-\bar z, \bar q_h -\bar q \right)_{\LL} \leq 0.
	\end{equation}
	We can rearrange the first term by first using the state equations, cf. Theorems \ref{thm_optcond} and \ref{thm_optcondh}, and then using the definition of the Ritz projection. This demonstrates 
	\begin{equation*}
	\left( \bar z_h - R_h \bar z, \bar q_h -\bar q \right)_{\LL} 
	= \cala( \bar u_h - \bar u, \bar z_h - R_h \bar z) = \cala( \bar u_h - R_h \bar u, \bar z_h - \bar z).
	\end{equation*}
	Invoking the definition of the adjoint equations, cf. 
	Theorems \ref{thm_optcond} and \ref{thm_optcondh}, and $R_hS = S_h$ this reads
	\begin{equation*}
	\begin{split}
	\left( \bar z_h - R_h \bar z, \bar q_h - \bar q\right)_{\LL}
	& =
	\left( \bar u_h - \bar u, \bar u_h - R_h\bar u\right)_{\LL}\\ 
	& = \lVert \bar u_h - \bar u \rVert_{\LL}^2 - \left( \bar u_h-\bar u, \bar u - R_h\bar u\right)_{\LL}.
	\end{split}
	\end{equation*}
	Inserting this into \eqref{eq:proof:steptoh2convforthestates1} yields
	\begin{equation*}
	\lVert \bar u_h - \bar u \rVert_{\LL}^2 \leq - \left( R_h \bar z - \bar z, \bar q_h -\bar q \right)_{\LL} + \left( \bar u_h-\bar u, \bar u - R_h\bar u\right)_{\LL}.
	\end{equation*}
	H\"older's inequality and Young's inequality supply
	\begin{equation*}
	\lVert \bar u_h - \bar u \rVert_{\LL}^2 \leq - \left( R_h \bar z - \bar z, \bar q_h -\bar q \right)_{\LL} + \frac{1}{2} \lVert  \bar u_h-\bar u \rVert_{\LL}^2 + \frac{1}{2} \lVert \bar u - R_h \bar u \rVert_{\LL}^2.
	\end{equation*}
	Recalling that $\lVert \bar u - R_h \bar u \rVert_{\LL}^2=\lVert S\bar q - S_h \bar q \rVert_{\LL}^2 \leq Ch^4$ by \Cref{lem_aprioridiffSSh}, the assertion follows after subtraction of $\frac{1}{2} \lVert \bar u_h-\bar u \rVert_{\LL}^2$.
\end{proof}

\begin{lemma}\label{lem_aprioridiffuuh}
	There exist $C,h_0>0$ such that for all $h\in(0,h_0]$ we have
	\begin{equation*}
	\Norm{\bar u-\bar u_h}{\LL}\leq Ch.
	\end{equation*}
\end{lemma}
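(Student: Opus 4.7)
The starting point is the bound from \Cref{lem_1steptoh2convforthestates}, so it suffices to show that $-(R_h\bar z-\bar z,\bar q_h-\bar q)_{\LL}\leq Ch^2$. The plan is to peel the inner product apart by H\"older's inequality into $\Norm{R_h\bar z-\bar z}{L^\infty(\Omega)}\cdot\Norm{\bar q_h-\bar q}{\Lone}$ and to bound each factor separately: the first factor by $Ch^2$ via an $L^\infty$-Ritz estimate, the second factor by a constant using the uniform $\BV$-boundedness of the (optimal) controls.

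First, I would observe that because $\cala$ is symmetric, the operator $S$ is self-adjoint, so the adjoint state is given by $\bar z=S(\bar u-u_d)$. Combined with the identity $R_hS=S_h$ (see \Cref{subsec_discrete_state}) this yields
\begin{equation*}
R_h\bar z-\bar z=S_h(\bar u-u_d)-S(\bar u-u_d).
\end{equation*}
Since $\bar q\in\BV\hookrightarrow\Linf$, \Cref{lem_aprioriS} gives $\bar u\in W^{2,\infty}(\Omega)$, and with $u_d\in\Linf$ we have $\bar u-u_d\in\Linf$ with an $h$-independent norm bound. Applying \Cref{lem_ritz} therefore supplies
\begin{equation*}
\Norm{R_h\bar z-\bar z}{L^\infty(\Omega)}\leq Ch^2\Norm{\bar u-u_d}{L^\infty(\Omega)}\leq Ch^2.
\end{equation*}

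Second, for the $L^1$-factor I would simply use that $\BV\hookrightarrow\Lone$ together with the uniform-in-$h$ bound $\Norm{\bar q_h}{\BV}\leq C$ from \Cref{thm:existenceofsolh} (and $\bar q\in\BV$), giving
\begin{equation*}
\Norm{\bar q_h-\bar q}{\Lone}\leq\Norm{\bar q_h}{\Lone}+\Norm{\bar q}{\Lone}\leq C.
\end{equation*}
Combining these two estimates via H\"older's inequality produces $|(R_h\bar z-\bar z,\bar q_h-\bar q)_{\LL}|\leq Ch^2$, and inserting this into \Cref{lem_1steptoh2convforthestates} yields $\Norm{\bar u-\bar u_h}{\LL}^2\leq C(h^4+h^2)\leq Ch^2$, from which the assertion follows by taking square roots.

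There is no real obstacle here once \Cref{lem_1steptoh2convforthestates} is in hand: the only delicate point is making sure we have the right $L^\infty$-bound for the adjoint Ritz error rather than merely an $L^2$-bound, which requires using self-adjointness of $S$ (so that \Cref{lem_ritz} applies to $\bar z$) and the fact that $\bar u-u_d\in\Linf$ uniformly in $h$. The $L^1$-control bound on the other factor is essentially free from the uniform $BV$-estimate already proved.
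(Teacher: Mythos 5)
Your proof is correct and follows essentially the same route as the paper: both start from \Cref{lem_1steptoh2convforthestates} and bound the cross term $-(R_h\bar z-\bar z,\bar q_h-\bar q)_{\LL}$ by H\"older's inequality, combining an $\mathcal{O}(h^2)$ Ritz-projection error with the uniform $BV$-bound on the controls. The only (harmless) difference is that you pair $L^\infty$ with $L^1$, which requires the maximum-norm estimate of \Cref{lem_ritz}, whereas the paper uses Cauchy--Schwarz in $L^2$ together with \Cref{lem_aprioridiffSSh} and the embedding $\BV\hookrightarrow\LL$; both give the same $\mathcal{O}(h^2)$ bound and hence the claimed rate.
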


\begin{proof}
By Lemma~\ref{lem_1steptoh2convforthestates} we have that
\begin{equation*}
	\lVert \bar u - \bar u_h \rVert_{\LL}^2 \leq C(h^4 +  \lVert R_h \bar z - \bar z\rVert_{\LL} \lVert \bar q_h - \bar q\rVert_{\LL}).
\end{equation*}
By Lemma~\ref{lem_aprioridiffSSh} and \Cref{thm:existence_probch} the first term is of order $Ch^2$. Taking the root yields the desired estimate.
\end{proof}

We readily deduce an error estimate for the adjoint state.
\begin{lemma} \label{lem_aprioridiffzzh} \label{lem_aprioridiffzzhgrad}
	There exist $C,h_0>0$ such that for all $h\in (0,h_0]$ we have
	\begin{equation*}
	\lVert \bar z_h - \bar z\rVert_{W^{1,\infty}(\Omega)}\le Ch.
	\end{equation*}
\end{lemma}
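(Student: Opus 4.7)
The plan is to exploit the symmetry of $\cala$, which gives $S^* = S$ and $S_h^* = S_h$, so that $\bar z = S(\bar u - u_d)$ and $\bar z_h = S_h(\bar u_h - u_d)$. This lets us split the error as
\begin{equation*}
\bar z - \bar z_h = (S-S_h)(\bar u - u_d) + S_h(\bar u - \bar u_h),
\end{equation*}
and I would bound each of the two terms in $W^{1,\infty}(\Omega)$ separately.

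For the first summand I first observe that $\bar u - u_d \in L^\infty(\Omega)$: we have $\bar q \in \BV \hookrightarrow L^\infty(\Omega)$ (continuous embedding in one space dimension), so Lemma~\ref{lem_aprioriS} yields $\bar u = S\bar q \in W^{2,\infty}(\Omega)$, and $u_d \in L^\infty(\Omega)$ by assumption. Applying Lemma~\ref{lem_aprioriS} once more shows that $w := S(\bar u - u_d) \in W^{2,\infty}(\Omega)\cap V$, with $\lVert w\rVert_{W^{2,\infty}(\Omega)} \leq C$ independent of $h$. Since $S_h(\bar u - u_d) = R_h w$, the $L^\infty$-part of the estimate follows from Lemma~\ref{lem_ritz} (order $h^2$, hence certainly order $h$), and the derivative part follows from the second estimate of Lemma~\ref{lem:W1inftystabofritz}, giving $\lVert (S-S_h)(\bar u - u_d)\rVert_{W^{1,\infty}(\Omega)} \leq Ch$.

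For the second summand I would use the $W^{1,\infty}$-stability of $S_h$ as a map from $\LL$ (Lemma~\ref{lem_aprioriWoneinf}) together with the state error estimate just proved in Lemma~\ref{lem_aprioridiffuuh}:
\begin{equation*}
\lVert S_h(\bar u - \bar u_h)\rVert_{W^{1,\infty}(\Omega)} \leq C \lVert \bar u - \bar u_h\rVert_{\LL} \leq Ch.
\end{equation*}
Combining the two bounds by the triangle inequality yields the claim.

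The only delicate point is ensuring that the right-hand side of the adjoint equation has enough regularity to invoke the sharp $W^{1,\infty}$-estimate of Lemma~\ref{lem:W1inftystabofritz} (which requires $W^{2,\infty}$ for the target of the Ritz projection rather than only $H^2$, as the latter would give only $h^{1/2}$). This is exactly what the one-dimensional embedding $\BV \hookrightarrow L^\infty(\Omega)$ buys us: it lifts $\bar u - u_d$ into $L^\infty(\Omega)$ and thus $S(\bar u - u_d)$ into $W^{2,\infty}(\Omega)$. Apart from this observation, the argument is a standard perturbation-splitting proof.
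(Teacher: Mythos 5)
Your argument is correct and follows essentially the same perturbation splitting as the paper's proof; the paper merely writes the decomposition with the roles of $\bar u$ and $\bar u_h$ interchanged, namely $\bar z_h-\bar z=(S_h^*-S^*)(\bar u_h-u_d)+S^*(\bar u_h-\bar u)$, using the uniform $L^\infty$-bound on $\bar u_h$ for the consistency term and the embedding $H^2(\Omega)\hookrightarrow W^{1,\infty}(\Omega)$ together with \Cref{lem_aprioriS} for the stability term. Both variants rest on the same ingredients (\Cref{lem_ritz}, \Cref{lem:W1inftystabofritz}, \Cref{lem_aprioridiffuuh}), and your observation that $\BV\hookrightarrow L^\infty(\Omega)$ supplies the $W^{2,\infty}$-regularity needed for the sharp Ritz-projection estimate is exactly the point that makes the order-$h$ bound work.
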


\begin{proof}
	We have
	\begin{equation*}
	\begin{split}
	\lVert \bar z_h - \bar z\rVert_{W^{1,\infty}(\Omega)}
	& =\lVert S_h^*(\bar u_h -u_d)-S^*(\bar u-u_d) \rVert_{W^{1,\infty}(\Omega)}\\
	& \leq\lVert S_h^*(\bar u_h -u_d)-S^*(\bar u_h-u_d) \rVert_{W^{1,\infty}(\Omega)}+\lVert S^*(\bar u_h - \bar u) \rVert_{W^{1,\infty}(\Omega)}.
	\end{split}
	\end{equation*}
 	\Cref{lem_ritz} together with \Cref{lem:W1inftystabofritz} and
 	\Cref{lem_aprioriS} show that 
 	\begin{equation*}
 	\lVert S_h^*(\bar u_h -u_d)-S^*(\bar u_h-u_d) \rVert_{W^{1,\infty}(\Omega)} \leq C(h^2+h) \lVert \bar u_h - u_d \rVert_{L^\infty(\Omega)}\leq Ch,
 	\end{equation*}
 	where we used that $u_d\in L^\infty(\Omega)$ and that $\lVert \bar u_h \rVert_{L^\infty(\Omega)} \leq C$, the latter being a consequence of \Cref{lem_ritz}.
	Moreover, by means of $H^2(\Omega)\hookrightarrow W^{1,\infty}(\Omega)$ and \Cref{lem_aprioriS}	we obtain
	\begin{equation*}
	\lVert S^*(\bar u_h - \bar u) \rVert_{W^{1,\infty}(\Omega)}
	\leq C \lVert \bar u_h - \bar u \rVert_{L^2(\Omega)}\leq Ch,
	\end{equation*}
	where the last inequality is due to Lemma~\ref{lem_aprioridiffuuh}.
\end{proof}

\subsubsection{Improved error estimates under structural assumptions}

We now improve the $\LL$ convergence order for the state to $\mathcal{O}(h^2)$ and deduce from this
that the controls have $\Lone$ convergence order $\mathcal{O}(h^2)$, and that
the adjoint state has $\Linf$ convergence order $\mathcal{O}(h^2)$. 
To achieve this, we work with a structural assumption: We consider situations where the continuous optimal control admits finitely many jumps. More precisely, we assume that the number of minima and maxima of the function $\bar\Phi$ is finite. This number bounds the number of jumps of the optimal control. Since these maxima and minima are in fact roots of the continuous adjoint state, regularity and convergence results for the discrete adjoint state allow to prove that the discrete problem admits a similar structure.
In the following we will frequently use the regularity $\bar z\in W^{2,\infty}(\Omega)$ from \Cref{thm_optcond}.

The essential structural assumption reads as follows.

\begin{assumption}\label{A2}
	Suppose that
	\begin{equation*}
	\left\{ x \in \Omega\colon \Abs{\bar \Phi(x)} = \alpha \right\}
	\end{equation*} 
	is finite. The elements of this set are denoted by $\bar x^1,\bar x^2, \ldots,\bar x^m$, i.e.,
	\begin{equation*}
	\left\{ x \in \Omega\colon |\bar \Phi(x)| = \alpha \right\} = \left\{\bar x^1,\bar x^2 \ldots, \bar x^m\right\},
	\end{equation*}
	with $m=0$ indicating that these sets are empty.
\end{assumption}
To interpret this assumption recall from \Cref{cor:supportcondition} that
\begin{equation*}
	\supp(\bar q') \subset \left\lbrace x \in \Omega\colon \Abs{\bar \Phi(x)} = \alpha \right\rbrace,
\end{equation*}
hence $\supp(\bar q')$ is also finite. Thus, there exist real numbers $\bar a$ and $\bar c^i$, $1\leq i\leq m$, such that
\begin{equation}\label{eq:barqdarstellung}
	\bar q  = \bar a + \sum_{i=1}^m \bar c^i 1_{(\bar x^i,1)}, \qquad
	\bar q'  = \sum_{i=1}^m \bar c^i \delta_{\bar x^i},
\end{equation}
where some of the coefficients may be zero.
In addition, \eqref{eq:jumpsatrootsofz} yields $\bar z(\bar x^i) = 0$, $1\leq i\leq m$, i.e., the $\bar x^i$ are roots of the continuous adjoint state. Under a mild additional assumption it is possible to prove that the discrete adjoint state $\bar z_h$ admits roots $\bar x_h^i$ close to the $\bar x^i$. Specifically, the distance $\lvert \bar x^i - \bar x_h^i \rvert$ is of order $\mathcal{O}(h)$.

The additional assumption reads as follows.

\begin{assumption}\label{A3}
	Let \Cref{A2} be fulfilled and suppose $\bar z'(\bar x^i) \neq 0$ for $i=1,2,\ldots,m$.
\end{assumption}

We point out that \Cref{A3} is equivalent to the existence of numbers $\kappa>0$ and $R>0$ such that
$|\bar\Phi(x)|\leq \alpha - \kappa |x-\bar x^i|^2$ for all $x\in B_R(\bar x^i)$, $1\leq i\leq m$. That is, \Cref{A3} imposes a \emph{quadratic growth condition} on $\bar\Phi$ near its extreme points $\bar x^i$. Also note that the discrete counterparts $\bar \Phi_h$ and $\hat \Phi_h$ of $\bar \Phi$ are piecewise quadratic functions.

Let us now prove the existence of unique roots of the discrete adjoint state in small neighborhoods of the points $\bar x^i$.
\begin{lemma} \label{lem_discretezerosofz}
	If \Cref{A3} is fulfilled, then there exist $R,\delta,h_0>0$ such that the following holds for all $h\in (0,h_0]$ and all $i=1,2,\ldots,m$. 
	$|\bar z^\prime| \geq \delta$ on $B_R(\bar x^i)$ and $\bar z_h$ has a unique root $\bar x_h^i$ in $B_R(\bar x^i)$. In addition, there hold $B_R(\bar x^i)\cap\partial\Omega=\emptyset$, the $B_R(\bar x^i)$ are pairwise disjoint, and the roots $\bar x_h^i$ satisfy $\abs{\bar x^i-\bar x_h^i}\leq Ch$ for a constant $C>0$ that does not depend on $h$.
\end{lemma}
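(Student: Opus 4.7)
The plan is straightforward: combine the nondegeneracy of $\bar z$ at the points $\bar x^i$ with the $W^{1,\infty}$-convergence of $\bar z_h$ toward $\bar z$ from \Cref{lem_aprioridiffzzh}, and conclude by the intermediate value theorem together with strict monotonicity.

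First I would use that $\bar z\in W^{2,\infty}(\Omega)\hookrightarrow C^1(\bar\Omega)$ by \Cref{thm_optcond}, so $\bar z'$ is continuous. Since $\bar z'(\bar x^i)\neq 0$ for each $i$ by \Cref{A3}, and since the $\bar x^i$ are finitely many points in the open interval $\Omega$, I can pick $R>0$ small enough and $2\delta>0$ such that the balls $B_R(\bar x^i)$ are contained in $\Omega$, are pairwise disjoint, and satisfy $|\bar z'|\geq 2\delta$ on $\overline{B_R(\bar x^i)}$ for every $i$. By continuity, $\bar z'$ has constant sign on each such ball, so $\bar z$ is strictly monotone on $\overline{B_R(\bar x^i)}$. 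Combined with $\bar z(\bar x^i)=0$ (see \eqref{eq:jumpsatrootsofz}) this yields $|\bar z(\bar x^i\pm R)|\geq 2\delta R$ and opposite signs at the endpoints.

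Next I would invoke \Cref{lem_aprioridiffzzhgrad} to obtain $\Norm{\bar z_h-\bar z}{W^{1,\infty}(\Omega)}\leq Ch$. Choosing $h_0$ so that $Ch_0\leq \min\{\delta,\delta R\}$, I get for all $h\in(0,h_0]$ that $|\bar z_h'|\geq \delta$ a.e.\ on each $B_R(\bar x^i)$ with the same sign as $\bar z'$ there; since $\bar z_h\in V_h$ is continuous and piecewise linear, this implies strict monotonicity of $\bar z_h$ on $\overline{B_R(\bar x^i)}$. Moreover, $|\bar z_h(\bar x^i\pm R)-\bar z(\bar x^i\pm R)|\leq Ch\leq \delta R$, so $\bar z_h(\bar x^i-R)$ and $\bar z_h(\bar x^i+R)$ still have opposite signs. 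The intermediate value theorem then produces at least one root $\bar x_h^i\in B_R(\bar x^i)$, and strict monotonicity makes this root unique.

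Finally, for the distance estimate I would exploit monotonicity: since $|\bar z'|\geq \delta$ on $B_R(\bar x^i)$ and $\bar z(\bar x^i)=0$,
\begin{equation*}
\delta\,|\bar x_h^i-\bar x^i|\leq |\bar z(\bar x_h^i)-\bar z(\bar x^i)|=|\bar z(\bar x_h^i)-\bar z_h(\bar x_h^i)|\leq \Norm{\bar z-\bar z_h}{L^\infty(\Omega)}\leq Ch,
\end{equation*}
which gives $|\bar x^i-\bar x_h^i|\leq (C/\delta)h$, as claimed. The only subtlety is to choose the constants $R$, $\delta$, $h_0$ uniformly across the finitely many indices $i=1,\ldots,m$, which is routine; there is no real obstacle, as the argument amounts to transferring the nondegenerate sign change of $\bar z$ at $\bar x^i$ to $\bar z_h$ via the uniform $W^{1,\infty}$-error estimate.
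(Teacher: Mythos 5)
Your proposal is correct and follows essentially the same route as the paper: nondegeneracy of $\bar z'$ at the $\bar x^i$ plus the $W^{1,\infty}$-error estimate of \Cref{lem_aprioridiffzzhgrad}, then the intermediate value theorem for existence, monotonicity (equivalently, the sign of $\bar z_h'$) for uniqueness, and a mean-value-type estimate for $|\bar x^i-\bar x_h^i|\leq Ch$. The only cosmetic differences are that the paper excludes a second root via the fundamental theorem of calculus rather than explicit strict monotonicity of $\bar z_h$, and picks arbitrary points $x_\pm$ with opposite signs instead of the endpoints $\bar x^i\pm R$.
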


\begin{proof}
	We first note that $\bar x^i\in\Omega$ is satisfied for $i=1,2,\ldots,m$ since $\bar\Phi(x)=0$ for $x\in\partial\Omega$, whereas
	$\abs{\bar\Phi(\bar x^i)}=\alpha>0$ for $i=1,2,\ldots,m$. Hence, we can assume without loss of generality that $R>0$ is chosen so small that $B_R(\bar x^i)\subset\Omega$ for $i=1,2,\ldots,m$.
	Moreover, we can choose $R>0$ so small that all $B_R(\bar x^i)$ are pairwise disjoint. 
	Thus, it is sufficient to argue for one $i\in \{ 1,2,\dots, m\}$. We write $\bar x:= \bar x^i$ for this $i$.
	 
	Since $\bar z\in\Htwo$, we have $\bar z'\in C(\bar\Omega)$. Thus, \Cref{A3} implies the existence of $R>0$ and $\delta>0$ such that $\bar x$ is the only solution of $\bar z(x)=0$ in $B_R(\bar x)$ and such that $\abs{\bar z'(x)}\geq \delta>0$ for all $x\in B_R(\bar x)$. Since $\bar z'$ is continuous, this inequality implies that $\bar z'$ does not change sign in $B_R(\bar x)$, hence $\bar z$ is strictly monotone in $B_R(\bar x)$.
	In view of \Cref{lem_aprioridiffzzhgrad} we can also achieve that $\bar z_h'$ has for all sufficiently small $h$ the same sign as $\bar z'$ a.e. in $B_R(\bar x)$. 
	Hence, $\bar z_h'$ is either positive or negative almost everywhere in $B_R(\bar x)$.
	
	Evidently, the strict monotonicity of $\bar z$ implies that $\bar z$ assumes both negative and positive values in $B_R(\bar x)$. 
	Fix $x_-,x_+\in B_R(\bar x)$ with $\bar z(x_{-})<0$ and $\bar z(x_{+})>0$.
	Using \Cref{lem_aprioridiffzzh} it follows that for $h_0>0$ sufficiently small $\bar z_h(x_-)<0$ and $\bar z_h(x_+)>0$ for all $h\in(0,h_0]$. Thus, the intermediate value theorem implies for every $h\in(0,h_0]$ the existence of $\bar x_h\in(x_-,x_+)$ with $\bar z_h(\bar x_h)=0$ as claimed.
	
	Suppose that there were an additional root $\hat x_h$ of $\bar z_h$ in $B_R(\bar x)$.
	Then, by the fundamental theorem of calculus for Sobolev functions, 
	we obtain $0=\bar z_h(\bar x_h)-\bar z_h(\hat x_h)=\int_{\hat x_h}^{\bar x_h} \bar z_h'(x) \, dx$. However,
	since $\bar z_h'$ is either positive or negative 
	almost everywhere in $B_R(\bar x)$, this cannot be true. Hence,	$\bar x_h$ is indeed the only root of $\bar z_h$ in $B_R(\bar x)$.
	
	It remains to establish the estimate $\abs{\bar x-\bar x_h}\leq Ch$. 
	Using $0 =\bar z(\bar x) = \bar z_h(\bar x_h)$ and the mean value theorem yields
	\begin{equation*}
		\bar z'(\xi)(\bar x-\bar x_h) = \bar z(\bar x) -\bar z(\bar x_h) = \bar z_h(\bar x_h) - \bar z(\bar x_h)
	\end{equation*}
	for a $\xi\in B_R(\bar x)$.
	Taking absolute values and using $1/\abs{\bar z'(\xi)} \leq 1/\delta$ this implies
	$\abs{\bar x-\bar x_h}\leq \abs{\bar z_h(\bar x_h) - \bar z(\bar x_h)}/\delta\leq Ch/\delta$,
	where we applied \Cref{lem_aprioridiffzzh} again.
\end{proof}

In the next lemma we conclude that in the neighborhoods $B_R(\bar x^i)$ only the $\bar x_h^i$ can satisfy $|\bar\Phi_h(x)|=\alpha$ and that there cannot be any points outside these neighborhoods where $|\bar\Phi_h(x)|=\alpha$ holds.
\begin{lemma} \label{lem_discretenumber}
Suppose that \Cref{A3} is valid and let $R>0$ and $\bar x_h^i$, $1\leq i\leq m$, be as in \Cref{lem_discretezerosofz}. Then there is $h_0>0$ such that for all $h\in (0,h_0]$ and all $x\in\bar\Omega$ we have
\begin{equation*}
\Abs{\bar \Phi_h(x)}=\alpha \quad\enspace\Longrightarrow\quad\enspace
x=\bar x_h^i \, \text{ for some } i\in\{1,2,\ldots,m\}.
\end{equation*}
\end{lemma}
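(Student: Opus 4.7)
The plan is to partition $\bar\Omega$ into the neighborhoods $B_R(\bar x^i)$ from \Cref{lem_discretezerosofz} and their complement $K:=\bar\Omega\setminus\bigcup_{i=1}^m B_R(\bar x^i)$, and to argue separately on each piece. On $K$ the continuous multiplier $\bar\Phi$ is uniformly bounded away from $\pm\alpha$, while on each $B_R(\bar x^i)$ I will exploit the strict monotonicity of $\bar z_h$ established in \Cref{lem_discretezerosofz}.

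First I would handle $K$. By shrinking $R>0$ if necessary we may assume $K$ is compact (a finite union of closed subintervals of $\bar\Omega$). From \Cref{A2} and the fact that $\{\bar x^1,\dots,\bar x^m\}\subset\bigcup_i B_R(\bar x^i)$, the continuous function $|\bar\Phi|$ is strictly less than $\alpha$ on $K$, so by compactness there exists $\gamma>0$ with $|\bar\Phi(x)|\leq\alpha-\gamma$ for all $x\in K$. Since $\bar\Phi_h(0)=\bar\Phi(0)=0$, the fundamental theorem of calculus yields $\|\bar\Phi-\bar\Phi_h\|_{L^\infty(\Omega)}\leq\|\bar z-\bar z_h\|_{L^\infty(\Omega)}\leq Ch$ by \Cref{lem_aprioridiffzzh}. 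Hence for all sufficiently small $h$ we obtain $|\bar\Phi_h(x)|\leq\alpha-\gamma/2<\alpha$ on all of $K$, which rules out any point of $K$ from the set of interest.

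Next I would handle each $B_R(\bar x^i)$ individually. The proof of \Cref{lem_discretezerosofz} shows that, for $h$ small enough, $\bar z_h'$ has constant sign almost everywhere on $B_R(\bar x^i)$, so $\bar z_h$ is strictly monotone there with unique zero $\bar x_h^i$. Consequently, for any $y\in B_R(\bar x^i)\setminus\{\bar x_h^i\}$ we have $\bar z_h(y)\neq 0$, and by continuity of $\bar z_h\in V_h$ the value $\bar z_h$ keeps its sign in a whole neighborhood of $y$. Thus $\bar\Phi_h$ is strictly monotone in a neighborhood of $y$, so there exist $y',y''\in B_R(\bar x^i)\subset\Omega$ arbitrarily close to $y$ with $\bar\Phi_h(y')>\bar\Phi_h(y)>\bar\Phi_h(y'')$. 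If $\bar\Phi_h(y)=\alpha$, this contradicts the bound $\|\bar\Phi_h\|_{L^\infty(\Omega)}\leq\alpha$ from \Cref{thm_optcondh} at $y'$; if $\bar\Phi_h(y)=-\alpha$, the same bound is violated at $y''$. In either case $|\bar\Phi_h(y)|<\alpha$, so the only candidate in $B_R(\bar x^i)$ is $\bar x_h^i$. Combining the two steps gives the claim.

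I do not expect a genuine obstacle: all the delicate work was done in \Cref{lem_discretezerosofz}, and the present statement is essentially a packaging result combining (i) uniform separation of $|\bar\Phi|$ from $\alpha$ on the complement of the extremal set, transferred to $\bar\Phi_h$ via $L^\infty$-convergence, and (ii) the monotonicity of $\bar z_h$ on each $B_R(\bar x^i)$, which together with the maximum bound $\|\bar\Phi_h\|_\infty\leq\alpha$ leaves $\bar x_h^i$ as the sole possible contact point.
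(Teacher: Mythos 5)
Your proof is correct and follows essentially the same two-case decomposition as the paper: outside the balls you use exactly the paper's argument (compactness, $|\bar\Phi|\leq\alpha-\gamma$ from \Cref{A2}, and $\|\bar\Phi-\bar\Phi_h\|_{\Linf}\leq\|\bar z-\bar z_h\|_{\Linf}\leq Ch$), and inside each $B_R(\bar x^i)$ you reduce to the uniqueness of the root $\bar x_h^i$ from \Cref{lem_discretezerosofz}. The only cosmetic difference is that the paper gets ``$|\bar\Phi_h(x)|=\alpha\Rightarrow\bar z_h(x)=0$'' by citing \Cref{cor:discretesupportcondition}, whereas you rederive it locally via the monotonicity of $\bar\Phi_h$ near points where $\bar z_h\neq 0$ together with the bound $\|\bar\Phi_h\|_{\infty}\leq\alpha$; both are valid.
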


\begin{proof}
Let $h_0>0$ be from \Cref{lem_discretezerosofz} and let $h\in (0,h_0]$ and $x\in\bar\Omega$ be such that $|\bar \Phi_h(x)|=\alpha$. From \Cref{cor:discretesupportcondition} we know that $\bar z_h(x) = 0$. 
We distinguish two cases.

\textbf{Case 1:} $x\in B_R(\bar x^i)$ for some $i\in\{1,2,\ldots,m\}$

In this case the claim follows from \Cref{lem_discretezerosofz}.

\textbf{Case 2:} $x\in\bar\Omega\setminus\bigcup_{i=1}^m B_R(\bar x^i)$

It is sufficient to show that in this case, $\abs{\bar \Phi_h(x)}=\alpha$ cannot be satisfied. 
To this end, we will demonstrate that there is $\epsilon>0$ such that $|\bar\Phi(x)| \leq \alpha-\epsilon$ for all $x\in\bar\Omega\setminus\bigcup_{i=1}^m B_R(\bar x^i)$. 
Granted this claim, we infer from the definitions of $\bar\Phi$ and $\bar\Phi_h$ together with \Cref{lem_aprioridiffzzh} and $|\Omega|=1$ that
\begin{equation*}
\Norm{\bar \Phi - \bar \Phi_h}{\Linf} \leq \Norm{\bar z - \bar z_h}{\Lone} \leq \Norm{\bar z - \bar z_h}{\Linf} \leq Ch.
\end{equation*}
Thus we obtain, for $h$ sufficiently small, that $|\bar \Phi_h(x)|\leq \alpha-\frac{\epsilon}{2}$ for all $x\in\bar\Omega\setminus\bigcup_{i=1}^m B_R(\bar x^i)$ proving that $\abs{\bar \Phi_h(x)}\neq\alpha$ for all these $x$, as desired.

To establish the existence of said $\epsilon$, note that $\abs{\bar\Phi}$ is continuous on the compact set
$\bar\Omega\setminus\bigcup_{i=1}^m B_R(\bar x^i)$. Hence, it attains a maximum on this set, and from \Cref{A2} and $\lVert\bar\Phi\rVert_{\Linf}\leq\alpha$, cf.~\Cref{thm_optcond}, 
it is evident that this maximum is smaller than $\alpha$,
which shows that the desired $\epsilon$ exists, thereby concluding the proof.
\end{proof}

Lemmas \ref{lem_discretezerosofz} and \ref{lem_discretenumber} guarantee the existence of $m$ well-defined pairs $(\bar x^i,\bar x_h^i)$ that are roots of the continuous and discrete adjoint state, respectively. 
By \Cref{cor:supportcondition} and \Cref{cor:discretesupportcondition} we have
\begin{align*}
\supp(\bar q') & \subset \left\lbrace x \in \Omega\colon |\bar \Phi(x)| = \alpha \right\rbrace \subset \big\lbrace x\in \Omega\colon \bar z(x) = 0\big\rbrace,\\
\supp(\bar q_h') & \subset \left\lbrace x \in \Omega\colon |\bar \Phi_h(x)| = \alpha \right\rbrace \subset \big\lbrace x\in \Omega\colon \bar z_h(x) = 0\big\rbrace.
\end{align*}
Therefore, Lemmas \ref{lem_discretezerosofz} and \ref{lem_discretenumber} together with 
\Cref{A2} imply that the number of points of the support of $\bar q'$ and $\bar q_h'$ are both bounded by $m$.  
Using \Cref{lem_discretezerosofz} we observe for the cardinality of the involved sets that
\begin{equation*}
\# \left\lbrace x\in \bigcup_{i=1}^m B_R(\bar x^i)\colon \bar z_h(x) = 0 \right\rbrace  =
\# \left\lbrace x\in \bigcup_{i=1}^m B_R(\bar x^i)\colon \bar z(x) = 0 \right\rbrace = m.
\end{equation*}
Yet, by virtue of \Cref{lem_discretenumber} this implies
\begin{equation*}
\# \left\lbrace x \in \Omega\colon |\bar \Phi_h(x)| = \alpha \right\rbrace \leq \# \left\lbrace x \in \Omega\colon |\bar \Phi(x)| = \alpha \right\rbrace = m,
\end{equation*}
but it can happen, at least for large $h$, that
\[ \#\supp(\bar q')<\#\supp(\bar q_h'). \]
Since we know from
\Cref{cor:discretesupportcondition} and \Cref{lem_discretenumber} that
\begin{equation*}
\supp(\bar q_h') \subset \left\lbrace x \in \Omega\colon \Abs{\bar\Phi_h(x)} = \alpha\right\rbrace
\qquad\text{and}\qquad
\# \left\lbrace x \in \Omega\colon \Abs{\bar\Phi_h(x)} = \alpha\right\rbrace \leq m,
\end{equation*}
we find the following discrete analogue to the continuous representation \eqref{eq:barqdarstellung}:
There exist real numbers $\bar a_h$ and $\bar c_h^i$, $1\leq i\leq m$, such that
\begin{equation}\label{eq:barqhpdarstellung2}
\bar q_h  = \bar a_h + \sum_{i=1}^m \bar c_h^i 1_{(\bar x_h^i,1)}, \qquad
\bar q_h'  = \sum_{i=1}^m \bar c_h^i \delta_{\bar x_h^i}.
\end{equation}
Note that some of the coefficients may be zero.
In addition, we recall that $\bar z_h(\bar x_h^i) = 0$ for $i=1,\ldots,m$ by definition, cf.~\Cref{lem_discretezerosofz}.

Next we estimate the difference between the jump heights of the optimal control $\bar q$ and its counterpart $\bar q_h$.
\begin{lemma} \label{lem_estimateforthejumps}
Suppose that \Cref{A3} is valid. 
Then there exist $C,h_0>0$ such that for all $h\in(0,h_0]$ the optimal controls $\bar q= \bar a + \sum_{i=1}^m \bar c^i 1_{(\bar x^i,1)}$ of \eqref{prob:BV} and $\bar q_h = \bar a_h + \sum_{i=1}^m \bar c^i_h 1_{(\bar x_h^i,1)}$ of \eqref{prob_BVh} satisfy
\begin{equation*}
\sum_{i=1}^m |\bar c^i-\bar c^i_h| \leq C \left( h^2 +  \lVert \bar u - \bar u_h \rVert_{\LL} \right).
\end{equation*}
\end{lemma}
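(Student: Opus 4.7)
The plan is to extract each jump difference $\bar c^i - \bar c_h^i$ individually by testing the measure $(\bar q - \bar q_h)'$ against a smooth bump that is $1$ near $\bar x^i$ and vanishes near the other jump points. Using \Cref{lem_discretezerosofz}, I would first fix $R>0$ and $h_0>0$ so that the balls $B_R(\bar x^j)\subset\Omega$ are pairwise disjoint and $\bar x_h^j\in B_{R/2}(\bar x^j)$ for all $j=1,\dots,m$ and all $h\in(0,h_0]$. For each $i$ I would choose a fixed $\phi_i\in C_c^\infty(\Omega)$ satisfying $\phi_i\equiv 1$ on $B_{R/2}(\bar x^i)$ and $\supp\phi_i\subset B_R(\bar x^i)$, so that $\phi_i(\bar x^j)=\phi_i(\bar x_h^j)=\delta_{ij}$. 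Combining the representations \eqref{eq:barqdarstellung} and \eqref{eq:barqhpdarstellung2} with integration by parts for BV functions (the boundary terms drop out because $\supp\phi_i\subset\Omega$) gives
\begin{equation*}
\bar c^i-\bar c_h^i=\int_\Omega \phi_i\,d(\bar q-\bar q_h)'=-\int_\Omega (\bar q-\bar q_h)\,\phi_i'\,dx.
\end{equation*}

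Next I would link the right-hand side to the state error. Since $\phi_i'\in C_c^\infty(\Omega)\subset V$, testing the continuous state equation of \Cref{thm_optcond} against $\phi_i'$ yields $(\bar q,\phi_i')_{\LL}=\cala(\bar u,\phi_i')$. For $\bar q_h$, combining the discrete state equation of \Cref{thm_optcondh} with the Galerkin orthogonality $\cala(\bar u_h,v-R_hv)=0$ (valid for any $v\in V$ because $\bar u_h\in V_h$) gives $(\bar q_h,\phi_i')_{\LL}=\cala(\bar u_h,\phi_i')+(\bar q_h,\phi_i'-R_h\phi_i')_{\LL}$. Subtracting these identities produces
\begin{equation*}
\bar c^i-\bar c_h^i=-\cala(\bar u-\bar u_h,\phi_i')+(\bar q_h,\phi_i'-R_h\phi_i')_{\LL}.
\end{equation*}

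Both terms can now be bounded. For the first, integrating the $a$-part of $\cala$ by parts against the smooth, fixed function $\phi_i$ (the boundary terms vanishing because $\bar u-\bar u_h\in V$) moves all derivatives onto $\phi_i$, producing $|\cala(\bar u-\bar u_h,\phi_i')|\leq C\lVert\bar u-\bar u_h\rVert_{\LL}$ with $C$ depending only on $\phi_i$, $a$, and $d_0$. For the second, the uniform bound $\lVert\bar q_h\rVert_{\LL}\leq\lVert\bar q_h\rVert_{\BV}\leq C$ from \Cref{thm:existenceofsolh} together with the $L^2$-error estimate $\lVert\phi_i'-R_h\phi_i'\rVert_{\LL}\leq Ch^2$ — which follows from \Cref{lem_aprioridiffSSh} by writing $\phi_i'=Sv$ with $v:=-\bigl(a(\phi_i')'\bigr)'+d_0\phi_i'\in\LL$ and noting $R_h\phi_i'=S_hv$ — yields $|(\bar q_h,\phi_i'-R_h\phi_i')_{\LL}|\leq Ch^2$. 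Summing the resulting bound $|\bar c^i-\bar c_h^i|\leq C(h^2+\lVert\bar u-\bar u_h\rVert_{\LL})$ over the fixed number $m$ of jump points delivers the claim. The main obstacle lies in the first step: one needs a single test function $\phi_i$ that simultaneously separates the continuous and the discrete jump points, which requires precisely the uniform localization $|\bar x^i-\bar x_h^i|\leq Ch$ established in \Cref{lem_discretezerosofz}.
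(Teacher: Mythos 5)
Your proposal is correct and follows essentially the same route as the paper: both test $(\bar q-\bar q_h)'$ against a smooth bump that isolates the $i$-th jump pair via \Cref{lem_discretezerosofz}, integrate by parts, insert the Ritz projection of the derivative of the bump (whose $\LL$-error is $\mathcal{O}(h^2)$ by \Cref{lem_aprioridiffSSh}), and integrate the remaining term $\cala(\bar u-\bar u_h,\phi_i')$ by parts onto the fixed test function to pick up $\lVert\bar u-\bar u_h\rVert_{\LL}$. The only difference is cosmetic bookkeeping: you route the Ritz projection through Galerkin orthogonality in the discrete state equation, whereas the paper inserts $R_h(g')$ directly into the $L^2$ pairing; the resulting estimates are identical.
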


\begin{proof}
Let $R,h_0>0$ be the quantities from \Cref{lem_discretezerosofz}.
Then for all $h\in(0,h_0]$ the balls $B_{\frac{3}{4}R}(\bar x^i)$ are contained in $\Omega$ and are pairwise disjoint for $i=1,\ldots,m$. 
For any $1\leq i\leq m$ we can thus choose a function $g\in C_c^\infty(\Omega)$ such that $g=1$ on $B_{\frac{R}{2}}(\bar x^i)$ and $g = 0$ on $\bar\Omega\setminus B_{\frac34 R}(\bar x^i)$. For $h$ small enough we have $\bar x_h^i\in B_{\frac{R}{2}}(\bar x^i)$ for all $i\in\{1,2,\dots,m\}$ by \Cref{lem_discretezerosofz}. 

Using the structure of the optimal controls, the definition of the distributional derivative,
and the definition of the state equation, we infer for all $h\in(0,h_0]$ that
\begin{equation}\label{eq_maineqinlem4.8}
\begin{split}
\lvert\bar c^i-\bar c^i_h\rvert
& = \lvert \langle \bar q' - \bar q_h', g \rangle_{\M,C(\bar \Omega)}\rvert
  = \lvert - \left( \bar q - \bar q_h, g' \right)_{\LL}\rvert\\
&\leq \Bigl\lvert\left( \bar q - \bar q_h, R_h(g') \right)_{\LL}\Bigr\rvert + \Bigl\lvert\left( \bar q - \bar q_h, g'-R_h(g') \right)_{\LL}\Bigr\rvert.
\end{split}
\end{equation}
For the second term on the right-hand side we observe 
\begin{equation*}
\Bigl\lvert\left( \bar q - \bar q_h, g'-R_h(g') \right)_{\LL}\Bigr\rvert
\leq \left( \Norm{\bar q}{\LL}+\Norm{\bar q_h}{\LL} \right)\Norm{g' - R_h (g')}{\LL}\leq Ch^2
\end{equation*}
due to \Cref{lem_aprioridiffSSh} and the boundedness of $\bar q_h$ independent of $h$ (after decreasing $h_0$ if necessary), cf. \Cref{thm:existenceofsolh}. Using the state equation for the first term we obtain 
\begin{equation*}
\begin{split}
& \Bigl\lvert\left( \bar q - \bar q_h, R_h(g') \right)_{\LL}\Bigr\rvert
= \Bigl\lvert\cala(\bar u-\bar u_h,R_h(g'))\Bigr\rvert \\
& \quad \leq \Bigl\lvert\cala(\bar u,R_h(g')-g')+\cala(\bar u,g')-\cala(\bar u_h,g')\Bigr\rvert \leq \Bigl\lvert\cala(\bar u,R_h(g') -g') \Bigr\rvert + \Bigl\lvert\cala(\bar u - \bar u_h,g')\Bigr\rvert \\
& \quad = \left| (\bar q, R_h(g')-g')_{\LL}\right| + \Bigl\lvert \int\limits_\Omega  (\bar u-\bar u_h)((ag'')'+d_0g')\,dx \Bigr\rvert \\
& \quad \leq Ch^2+\Norm{\bar u-\bar u_h}{\LL}\Norm{(ag')'+d_0g'}{\LL},
\end{split}
\end{equation*}
where the second inequality is obtained by virtue of \Cref{lem_aprioridiffSSh} and integration by parts.
Inserting the two obtained estimates into \eqref{eq_maineqinlem4.8} yields the assertion after summation.
\end{proof}

From the previous lemma we derive an estimate
for the difference between the offsets and the jump positions of $\bar q$ and $\bar q_h$.

\begin{lemma} \label{lem_estimatesforpartsofthecontrols}
Suppose that \Cref{A3} is valid.
Then there exist $C,h_0>0$ such that for all $h\in(0,h_0]$ the optimal controls $\bar q= \bar a + \sum_{i=1}^m \bar c^i 1_{(\bar x^i,1)}$ and $\bar q_h = \bar a_h + \sum_{i=1}^m \bar c^i_h 1_{(\bar x_h^i,1)}$ satisfy
\begin{equation*}
|\bar a - \bar a_h| + \sum_{i=1}^m |\bar x^i-\bar x_h^i| \leq C \left( h^2 + \lVert \bar u - \bar u_h \rVert_{\LL} \right).
\end{equation*}
\end{lemma}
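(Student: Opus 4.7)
The plan is to treat $|\bar a-\bar a_h|$ and $\sum_{i=1}^m|\bar x^i-\bar x_h^i|$ separately: the jump positions will be controlled through the root analysis for $\bar z$ and $\bar z_h$, while the offset will be extracted by testing the difference $\bar q-\bar q_h$ against a smooth function supported on a subinterval on which both controls are identically constant.

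For the jump positions, the natural starting point is $\bar z(\bar x^i)=0=\bar z_h(\bar x_h^i)$. Combining the mean value theorem with the lower bound $|\bar z'|\geq\delta$ on $B_R(\bar x^i)$ provided by \Cref{lem_discretezerosofz} gives
\[
|\bar x^i-\bar x_h^i|\leq\delta^{-1}|\bar z(\bar x_h^i)-\bar z_h(\bar x_h^i)|\leq\delta^{-1}\norm{\bar z-\bar z_h}{\Linf}.
\]
It therefore suffices to sharpen the $L^\infty$-estimate for $\bar z-\bar z_h$ beyond the $\mathcal{O}(h)$ of \Cref{lem_aprioridiffzzh}. Using the splitting
\[
\bar z-\bar z_h=(S^*-S_h^*)(\bar u-u_d)+S_h^*(\bar u-\bar u_h),
\]
\Cref{lem_ritz} together with $\bar u,u_d\in\Linf$ bounds the first summand by $Ch^2$, while \Cref{lem_aprioriWoneinf} combined with the embedding $\Woneinf\hookrightarrow\Linf$ bounds the second by $C\norm{\bar u-\bar u_h}{\LL}$. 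Summing over $i=1,\ldots,m$ yields the jump contribution.

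For the offset I intend to exploit that there is an interval on which both controls are constant and equal to their offsets. Since each $\bar x^i$ lies strictly inside $\Omega$ and the balls $B_R(\bar x^i)$ are pairwise disjoint, I can fix a compact interval $I\subset(0,\min_i\bar x^i-R)$ of positive length (and any $I\subset\Omega$ works in the degenerate case $m=0$). For $h$ small \Cref{lem_discretezerosofz} places every $\bar x_h^i$ to the right of $\sup I$, so representations \eqref{eq:barqdarstellung} and \eqref{eq:barqhpdarstellung2} force $\bar q\equiv\bar a$ and $\bar q_h\equiv\bar a_h$ on $I$. Selecting $g\in C_c^\infty(I)\subset V$ with $\int_\Omega g\,dx=1$ therefore gives $(\bar q-\bar q_h,g)_{\LL}=\bar a-\bar a_h$. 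The continuous state equation supplies $(\bar q,g)_{\LL}=\cala(\bar u,g)$; splitting $g=R_hg+(g-R_hg)$ and invoking the Galerkin identity $\cala(\bar u_h,g-R_hg)=0$ (a consequence of the symmetry of $\cala$ and $\bar u_h\in V_h$) leads to $(\bar q_h,g)_{\LL}=\cala(\bar u_h,g)+(\bar q_h,g-R_hg)_{\LL}$. Subtracting yields
\[
\bar a-\bar a_h=\cala(\bar u-\bar u_h,g)-(\bar q_h,g-R_hg)_{\LL}.
\]
Integration by parts (valid because $g\in C_c^\infty(\Omega)$) bounds the first term by $C\norm{\bar u-\bar u_h}{\LL}$, while \Cref{thm:existenceofsolh} together with the standard $L^2$-Ritz estimate $\norm{g-R_hg}{\LL}\leq Ch^2$ (derivable from \Cref{lem_aprioridiffSSh} via the identity $g=S(-(ag')'+d_0g)$) bounds the second by $Ch^2$.

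The main delicate point is the choice of $I$: it must be uniformly (in $h$) separated from every jump position of $\bar q_h$, and that separation is exactly what \Cref{lem_discretezerosofz} supplies. Once $I$ is fixed, the remaining ingredients—the mean value argument for the roots of $\bar z_h$ and the Galerkin orthogonality coupled with the integration by parts—are routine and do not conceal further difficulty.
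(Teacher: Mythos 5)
Your estimate for the jump positions coincides with the paper's: the same mean value argument combined with $|\bar z'|\geq\delta$ on $B_R(\bar x^i)$, followed by the same splitting of $\bar z-\bar z_h$ into a Ritz-projection error of order $h^2$ (\Cref{lem_ritz}) and a term $S_h^*(\bar u-\bar u_h)$ controlled by the $W^{1,\infty}$-stability of $S_h$ (\Cref{lem_aprioriWoneinf}). For the offset, however, you take a genuinely different and somewhat more economical route. The paper exploits the zero-mean property $\int_\Omega\bar z\,dx=\int_\Omega\bar z_h\,dx=0$, expands $\bar z-\bar z_h$ via $\mathcal{S}=S^*S$ and $\mathcal{S}_h=S_h^*S_h$ applied to the explicit representations of $\bar q$ and $\bar q_h$, and solves for $\bar a-\bar a_h$ using $\int_\Omega S^*S1\,dx=\lVert S1\rVert_{\LL}^2\neq0$; this forces it to invoke the jump-height bound of \Cref{lem_estimateforthejumps}, the already-proved position bound, and the uniform bounds on $|\bar a_h|$ and $\sum_i|\bar c_h^i|$. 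You instead localize on a fixed compact interval $I\subset(0,\min_i\bar x^i-R)$ on which, by \eqref{eq:barqdarstellung}, \eqref{eq:barqhpdarstellung2} and \Cref{lem_discretezerosofz}, both controls reduce to their offsets, and test with a fixed $g\in C_c^\infty(I)$ of unit mass; the identity $\bar a-\bar a_h=\cala(\bar u-\bar u_h,g)-(\bar q_h,g-R_hg)_{\LL}$ then follows from the two state equations, the symmetry of $\cala$, and Galerkin orthogonality, and is estimated exactly as in the paper's proof of \Cref{lem_estimateforthejumps}. This buys independence from the jump-height estimate and from the nondegeneracy $S1\neq0$, at the price of having to justify (as you correctly do) that $I$ is uniformly separated from all jumps of $\bar q_h$ for small $h$, which is precisely what \Cref{lem_discretezerosofz} and \Cref{lem_discretenumber} provide. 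Both arguments are sound and yield the stated bound.
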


\begin{proof}
\Cref{lem_discretezerosofz} and \Cref{cor:supportcondition} imply
\begin{equation*}
0 = \bar z(\bar x^i) - \bar z(\bar x_h^i) + \bar z(\bar x_h^i) - \bar z_h(\bar x_h^i) =  \bar z'(\xi^i) (\bar x^i-\bar x_h^i) + \bar z(\bar x_h^i) - \bar z_h(\bar x_h^i)
\end{equation*}
for some $\xi^i$ between $\bar x^i$ and $\bar x_h^i$. By \Cref{lem_discretezerosofz} we also have $\abs{\bar z'}\geq\delta>0$ in a neighborhood of $\bar x^i$ containing $\bar x^i_h$ for $i=1,2,\ldots,m$ for $h$ sufficiently small. 
Thus, by Lemmas \ref{lem_aprioridiffSSh}, \ref{lem_ritz} and \ref{lem_aprioriWoneinf} we find
\begin{equation} \label{eq:proof:barxtobarxh0}
\begin{split}
|\bar x^i-\bar x_h^i| 
& \leq C \lVert \bar z -\bar z_h \rVert_{L^\infty(\Omega)} \\
& \leq C \lVert \bar z -R_h\bar z \rVert_{L^\infty(\Omega)} + C \lVert S_h\bar u -\bar z_h \rVert_{L^\infty(\Omega)} \\
& \leq C \left(h^2+\lVert \bar u -\bar u_h \rVert_{\LL}\right).
\end{split}
\end{equation}
It remains to estimate the difference in the offsets. To this end, we denote $\mathcal{S}:=S^* S$ and $\mathcal{S}_h:=S_h^* S_h$ and observe that
\begin{align*}
\bar z - \bar z_h & = S^*S \left( \bar a + \sum_{i=1}^m \bar c^i 1_{(\bar x^i,1)} \right) - S^*_hS_h \left( \bar a_h + \sum_{i=1}^m \bar c^i_h 1_{(\bar x^i_h,1)} \right) - \left(S^* -S^*_h \right)u_d \\
& = (\bar a - \bar a_h) \mathcal{S} 1 + \bar a_h (\mathcal{S} - \mathcal{S}_h) 1 + \sum_{i=1}^m (\bar c^i - \bar c_h^i) \mathcal{S} 1_{(\bar x^i,1)}  \\
& \quad  + \sum_{i=1}^m \bar c_h^i (\mathcal{S}-\mathcal{S}_h) 1_{(\bar x^i,1)}
+ \sum_{i=1}^m \bar c_h^i \mathcal{S}_h (1_{(\bar x^i,1)} - 1_{(\bar x^i_h,1)}) - (S^*-S^*_h) u_d.
\end{align*}
By Theorems \ref{thm_optcond} and \ref{thm_optcondh} the means of $\bar z$ and $\bar z_h$ vanish. Integration hence shows 
\begin{align*}
0 & = (\bar a-\bar a_h) \int_\Omega \mathcal{S} 1~dx + \bar a_h \int_\Omega (\mathcal{S} - \mathcal{S}_h) 1 \,dx +  \sum_{i=1}^m (\bar c^i-\bar c_h^i) \int_\Omega \mathcal{S} 1_{(\bar x^i,1)} \,dx\\
& \quad + \sum_{i=1}^m \bar c_h^i \int_\Omega (\mathcal{S}-\mathcal{S}_h) 1_{(\bar x^i,1)} \,dx
+ \sum_{i=1}^m \bar c_h^i \int_\Omega \mathcal{S}_h (1_{(\bar x^i,1)} - 1_{(\bar x^i_h,1)}) \,dx \\
& \quad - \int_\Omega (S^*-S^*_h) u_d  \,dx.
\end{align*}
As $S$ is an isomorphism, we have
\begin{equation*}
\int_\Omega \mathcal{S} 1 \,dx
= \int_\Omega S^*S 1 \,dx = \lVert S1 \rVert_{\LL}^2 \neq 0
\end{equation*}
and therefore
\begin{equation} \label{eq:proof:convergenceofoffsets0}
\begin{split}
|\bar a-\bar a_h| & \leq \lVert S1 \rVert_{\LL}^{-2} \biggl( |\bar a_h| \lVert (\mathcal{S}-\mathcal{S}_h) 1 \rVert_{L^1(\Omega)}+ \sum_{i=1}^m |\bar c^i-\bar c^i_h| ~ \lVert \mathcal{S} 1_{(\bar x^i,1)} \rVert_{L^1(\Omega)} \biggr.\\
& \quad + \sum_{i=1}^m |\bar c_h^i| \lVert (\mathcal{S}-\mathcal{S}_h) 1_{(\bar x^i,1)} \rVert_{L^1(\Omega)} + \sum_{i=1}^m |\bar c_h^i| \lVert \mathcal{S}_h (1_{(\bar x^i,1)} - 1_{(\bar x^i_h,1)}) \rVert_{L^1(\Omega)} \\
& \qquad \biggl. + \, \lVert (S^*-S^*_h) u_d \rVert_{L^1(\Omega)} \biggr).
\end{split}
\end{equation}
From \Cref{lem_aprioridiffSSh} and $S=S^*$ we deduce
\begin{align*}
\lVert \mathcal{S} - \mathcal{S}_h \rVert_{\Lin( \LL, \LL)} & \leq \lVert S^* \rVert_{\Lin( \LL, \LL)} \lVert S-S_h \rVert_{\Lin( \LL, \LL)} \\
& \qquad + \lVert S^* - S^*_h \rVert_{\Lin( \LL, \LL)} \lVert S_h \rVert_{\Lin( \LL, \LL)}
\leq Ch^2.
\end{align*}
This, \Cref{lem_aprioridiffSSh} and \eqref{eq:proof:convergenceofoffsets0} yield, with $\lvert \bar c_h\rvert_1 := \sum_{i=1}^m |\bar c^i_h|$,
\begin{align*}
|\bar a-\bar a_h| & \leq C h^2 \left( |\bar a_h| + \lvert \bar c_h\rvert_1 + 1 \right) \\
& \qquad + C \sum_{i=1}^m |\bar c^i - \bar c^i_h| + C \sum_{i=1}^m |\bar c_h^i|  \lVert \mathcal{S}_h( 1_{(\bar x^i,1)} - 1_{(\bar x_h^i,1)}) \rVert_{L^1(\Omega)}.
\end{align*}
We have that $\norm{\mathcal{S}_h}{\Lin(\Lone,\Lone)}=\norm{S_h^\ast S_h}{\Lin(\Lone,\Lone)}\leq C$, since $S_h^*=S_h$ and, by standard energy norm estimates,
\begin{equation*}
\|S_hv\|_{L^1(\Omega)}\leq \|S_hv\|_{H_0^1(\Omega)}\leq C\|v\|_{H^{-1}(\Omega)}\leq C\|v\|_{L^1(\Omega)}
\end{equation*}
in one space dimension. We can therefore continue the estimate by
\begin{equation} \label{eq:proof:convergenceofoffsets1}
|\bar a-\bar a_h| \leq C h^2 \left( |\bar a_h| + |\bar c_h|_1 + 1 \right) + C \sum_{i=1}^m |\bar c^i - \bar c^i_h| + C \sum_{i=1}^m |\bar c_h^i|  \lVert 1_{(\bar x^i,1)} - 1_{(\bar x_h^i,1)} \rVert_{L^1(\Omega)}.
\end{equation}
From the definition of $\bar q_h$ we obtain
\begin{equation*}
\frac{1}{2}\lVert \bar a_h S1 + \sum_{i=1}^m \bar c_h^i S1_{(\bar x^i_h,1)} - u_d \rVert_{\LL}^2 + \alpha \lVert \sum_{i=1}^m \bar c_h^i \delta_{\bar x^i_h} \rVert_{\M} =  j_h(\bar q_h) \leq j_h(0) = j(0).
\end{equation*}
This implies $\lvert \bar c_h\rvert_1 = \lVert \sum_{i=1}^m \bar c_h^i \delta_{\bar x^i_h} \rVert_{\M} \leq C$ and because of $S1\neq 0$ it also yields $|\bar a_h|\leq C$ with constants independent of $h$. By \Cref{lem_estimateforthejumps} we have $\sum_{i=1}^m |\bar c^i-\bar c_h^i|\leq C( h^2 + \lVert \bar u - \bar u_h \rVert_{\LL} )$. Obviously, it also holds that $\lVert 1_{(\bar x^i,1)} - 1_{(\bar x_h^i,1)} \rVert_{L^1(\Omega)} = |\bar x^i-\bar x_h^i|$. Thus, \eqref{eq:proof:barxtobarxh0} and \eqref{eq:proof:convergenceofoffsets1} show
\begin{equation*}
|\bar a-\bar a_h| \leq C h^2 + C \lVert \bar u -\bar u_h \rVert_{\LL}.\qedhere
\end{equation*}
\end{proof}

The previous two results have the following consequence.

\begin{corollary}\label{cor_estimatesforcontrols}
Suppose that \Cref{A3} is valid.
Then there exist $C,h_0>0$ such that for all $h\in(0,h_0]$ we have
\begin{equation*}
\lVert \bar q - \bar q_h \rVert_{L^1(\Omega)} \leq C\left(  h^2 + \lVert \bar u - \bar u_h \rVert_{\LL} \right).
\end{equation*}
\end{corollary}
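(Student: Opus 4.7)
The plan is to exploit the explicit representations of $\bar q$ and $\bar q_h$ provided by \eqref{eq:barqdarstellung} and \eqref{eq:barqhpdarstellung2} and to bound the $L^1$-difference in terms of the three groups of parameters (offsets, jump heights, jump positions), each of which has already been controlled in \Cref{lem_estimateforthejumps} and \Cref{lem_estimatesforpartsofthecontrols}.

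Concretely, I would begin by writing
\begin{equation*}
\bar q - \bar q_h = (\bar a - \bar a_h) + \sum_{i=1}^m \bigl[(\bar c^i - \bar c_h^i)\, 1_{(\bar x^i,1)} + \bar c_h^i \bigl(1_{(\bar x^i,1)} - 1_{(\bar x_h^i,1)}\bigr)\bigr],
\end{equation*}
which is the natural decomposition that separates the errors in the offset, the jump heights, and the jump positions. Taking the $L^1$-norm and using $|\Omega|=1$ yields
\begin{equation*}
\lVert \bar q - \bar q_h\rVert_{L^1(\Omega)} \leq |\bar a-\bar a_h| + \sum_{i=1}^m |\bar c^i-\bar c_h^i| + \sum_{i=1}^m |\bar c_h^i|\, \lVert 1_{(\bar x^i,1)} - 1_{(\bar x_h^i,1)}\rVert_{L^1(\Omega)}.
\end{equation*}
The elementary identity $\lVert 1_{(\bar x^i,1)} - 1_{(\bar x_h^i,1)}\rVert_{L^1(\Omega)} = |\bar x^i - \bar x_h^i|$ reduces the last term to $\sum_i |\bar c_h^i|\,|\bar x^i-\bar x_h^i|$.

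The key observation is that the coefficients $\bar c_h^i$ are bounded independently of $h$: this was already noted in the proof of \Cref{lem_estimatesforpartsofthecontrols}, where $\sum_{i=1}^m |\bar c_h^i| = \Norm{\bar q_h'}{\M} \leq j(0)/\alpha$ follows directly from $j_h(\bar q_h)\leq j_h(0) = j(0)$. Hence one obtains
\begin{equation*}
\lVert \bar q - \bar q_h\rVert_{L^1(\Omega)} \leq |\bar a-\bar a_h| + \sum_{i=1}^m |\bar c^i-\bar c_h^i| + C \sum_{i=1}^m |\bar x^i-\bar x_h^i|,
\end{equation*}
and the claim follows by combining \Cref{lem_estimateforthejumps} (which bounds the jump-height differences) with \Cref{lem_estimatesforpartsofthecontrols} (which bounds the offset and jump-position differences). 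There is no real obstacle here: all the hard analytic work was done to establish those two lemmas, and this corollary is just their straightforward assembly via the triangle inequality.
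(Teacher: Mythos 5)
Your proof is correct and follows essentially the same route as the paper: a triangle-inequality decomposition of $\bar q-\bar q_h$ into offset, jump-height and jump-position contributions, assembled from \Cref{lem_estimateforthejumps} and \Cref{lem_estimatesforpartsofthecontrols}. The only (immaterial) difference is that the paper attaches the indicator difference $1_{(\bar x^i,1)}-1_{(\bar x_h^i,1)}$ to the fixed coefficients $\bar c^i$ rather than to $\bar c_h^i$, which sidesteps the need for the uniform bound $\sum_i|\bar c_h^i|\leq C$ that you correctly invoke.
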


\begin{proof}
As
\begin{equation*}
\lVert\bar q_h - \bar q\rVert_{L^1(\Omega)}
\leq \lvert\bar a_h -\bar a\rvert |\Omega| + 
\sum_{i=1}^m \lvert\bar c^i_h - \bar c_i\rvert \lVert 1_{(\bar x_h^i,1)} \rVert_{L^1(\Omega)}
+\sum_{i=1}^m \lvert\bar c^i\rvert \lVert 1_{(\bar x_h^i,1)}-1_{(\bar x^i,1)} \rVert_{L^1(\Omega)},
\end{equation*}	
the result follows from \Cref{lem_estimateforthejumps} together with \Cref{lem_estimatesforpartsofthecontrols}.
\end{proof}

%


In view of \Cref{cor_estimatesforcontrols}
it remains to estimate $\lVert \bar u-\bar u_h\rVert_{\LL}$. 
We are now able to establish convergence order $h^2$ for the optimal state.

\begin{theorem}\label{thm_errorestimatestatesorderh2}
Suppose that \Cref{A3} is valid.
Then there exist $C,h_0>0$ such that for all $h\in(0,h_0]$ we have
\begin{equation*}
\lVert \bar u_h -\bar u\rVert_{\LL} \leq C h^2.
\end{equation*}
\end{theorem}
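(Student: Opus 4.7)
The plan is to bootstrap from Lemma~\ref{lem_1steptoh2convforthestates}, which already reduces matters to bounding the mixed term $(R_h\bar z-\bar z,\bar q_h-\bar q)_{\LL}$. The target estimate I would aim for is
\[
\bigl|(R_h\bar z-\bar z,\bar q_h-\bar q)_{\LL}\bigr|\leq Ch^2\bigl(h^2+\lVert\bar u-\bar u_h\rVert_{\LL}\bigr).
\]
Plugging this into Lemma~\ref{lem_1steptoh2convforthestates} and applying Young's inequality in the form $Ch^2\lVert\bar u-\bar u_h\rVert_{\LL}\leq\tfrac12\lVert\bar u-\bar u_h\rVert_{\LL}^2+Ch^4$, the term $\lVert\bar u-\bar u_h\rVert_{\LL}^2$ on the right-hand side can be absorbed on the left, and the desired estimate $\lVert\bar u-\bar u_h\rVert_{\LL}^2\leq Ch^4$ follows.

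To establish the targeted bound, I would first sharpen the estimate on $R_h\bar z-\bar z$. Since $\cala$ is symmetric, $S=S^*$ and $R_hS=S_h$, so $R_h\bar z=R_hS(\bar u-u_d)=S_h(\bar u-u_d)$. Because $\bar u-u_d\in L^\infty(\Omega)$, Lemma~\ref{lem_ritz} then yields $\lVert R_h\bar z-\bar z\rVert_{L^\infty(\Omega)}\leq Ch^2$, while Lemma~\ref{lem_aprioridiffSSh} gives $\lVert R_h\bar z-\bar z\rVert_{\LL}\leq Ch^2$ as well.

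Next, invoking \Cref{A3} together with Lemmas~\ref{lem_discretezerosofz} and~\ref{lem_discretenumber} to justify \eqref{eq:barqdarstellung} and \eqref{eq:barqhpdarstellung2}, I would decompose
\[
\bar q_h-\bar q=(\bar a_h-\bar a)+\sum_{i=1}^m(\bar c_h^i-\bar c^i)\,1_{(\bar x^i,1)}+\sum_{i=1}^m\bar c_h^i\,\bigl(1_{(\bar x_h^i,1)}-1_{(\bar x^i,1)}\bigr).
\]
Testing $R_h\bar z-\bar z$ against the first two pieces, the $L^2$-bound on $R_h\bar z-\bar z$ combined with $|\bar a-\bar a_h|+\sum_i|\bar c^i-\bar c^i_h|\leq C(h^2+\lVert\bar u-\bar u_h\rVert_{\LL})$ from Lemmas~\ref{lem_estimateforthejumps} and~\ref{lem_estimatesforpartsofthecontrols} yields contributions of order $Ch^2(h^2+\lVert\bar u-\bar u_h\rVert_{\LL})$. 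For the third piece, the $L^\infty$-bound on $R_h\bar z-\bar z$ together with $\sum_i|\bar x^i-\bar x_h^i|\leq C(h^2+\lVert\bar u-\bar u_h\rVert_{\LL})$, see \eqref{eq:proof:barxtobarxh0}, and the uniform boundedness of the $\bar c_h^i$ supplied by \Cref{thm:existenceofsolh} again produces a contribution of the same size.

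The main delicate point is precisely the upgrade of the error bound $\lVert R_h\bar z-\bar z\rVert_{L^\infty(\Omega)}$ from the generic $O(h)$ one would naively have to the sharp $O(h^2)$ needed to pair against the geometric mismatch $1_{(\bar x_h^i,1)}-1_{(\bar x^i,1)}$; the observation that $R_h\bar z$ coincides with $S_h(\bar u-u_d)$ thanks to the symmetry of $\cala$ is what unlocks Lemma~\ref{lem_ritz}. Once this is secured, the three componentwise bounds above combine mechanically and Young's inequality closes the argument.
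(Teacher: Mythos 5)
Your proposal is correct and follows essentially the same route as the paper: the paper likewise starts from Lemma~\ref{lem_1steptoh2convforthestates}, bounds the mixed term by $\lVert\bar z-R_h\bar z\rVert_{L^\infty(\Omega)}\lVert\bar q-\bar q_h\rVert_{L^1(\Omega)}$ using the $\mathcal{O}(h^2)$ Ritz error from Lemma~\ref{lem_ritz} together with Corollary~\ref{cor_estimatesforcontrols}, and closes with Young's inequality. Your componentwise pairing of $R_h\bar z-\bar z$ against the three pieces of $\bar q_h-\bar q$ simply unrolls the proof of Corollary~\ref{cor_estimatesforcontrols} inside the H\"older step, so it is the same argument in a slightly different packaging.
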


\begin{proof}
Combining \Cref{lem_1steptoh2convforthestates} with Hölder's inequality and \Cref{cor_estimatesforcontrols} leads to
\begin{equation*}
\lVert \bar u_h -\bar u\rVert_{\LL}^2 \leq C \lVert \bar z -R_h \bar z \rVert_{L^\infty(\Omega)} \left(h^2 + \lVert \bar u - \bar u_h \rVert_{\LL} \right) + Ch^4. 
\end{equation*}
By Young's inequality this yields
\begin{equation*}
\frac{1}{2}\lVert \bar u_h -\bar u\rVert_{\LL}^2 \leq C \lVert \bar z - R_h \bar z \rVert_{L^\infty(\Omega)} h^2 + 
C \lVert \bar z - R_h \bar z \rVert_{L^\infty(\Omega)}^2 + Ch^4.
\end{equation*} 
Since $\bar z\in W^{2,\infty}(\Omega)$, the error estimate of the Ritz projection from Lemma \ref{lem_ritz} thus implies the assertion.
\end{proof}

Finally, we obtain convergence of order $h^2$ also for the optimal control and the optimal adjoint state, but with respect to the $L^1(\Omega)$-norm and the $L^\infty(\Omega)$-norm, respectively.

\begin{corollary}\label{cor_errorestimatestatesorderh2}
Suppose that \Cref{A3} is valid. Then there exist $C,h_0>0$ such that for all $h\in(0,h_0]$ we have the following estimates of the structural differences of $\bar q$ and $\bar q_h$
\begin{equation*}
\sum_{i=1}^m |\bar x^i - \bar x^i_h| \leq Ch^2, \qquad \sum_{i=1}^m |\bar c^i - \bar c^i_h| \leq Ch^2 \qquad \text{ and } \qquad |\bar a - \bar a_h| \leq Ch^2.
\end{equation*}
We also have the error estimates
	\begin{equation*}
		\Norm{\bar q-\bar q_h}{\Lone}\leq Ch^2 \qquad\text{ and }\qquad
		\Norm{\bar z-\bar z_h}{\Linf}\leq Ch^2.
	\end{equation*}		
\end{corollary}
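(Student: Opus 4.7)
The plan is to observe that the first four assertions follow essentially by substitution: Theorem \ref{thm_errorestimatestatesorderh2} gives us the crucial bound $\lVert \bar u - \bar u_h \rVert_{\LL} \leq Ch^2$, and the lemmas preceding it have already expressed the structural and $L^1$ errors in terms of $h^2 + \lVert \bar u - \bar u_h \rVert_{\LL}$. So I would first plug Theorem \ref{thm_errorestimatestatesorderh2} into Lemma \ref{lem_estimateforthejumps} to get $\sum_i |\bar c^i - \bar c_h^i| \leq Ch^2$, into Lemma \ref{lem_estimatesforpartsofthecontrols} to get $|\bar a - \bar a_h| + \sum_i |\bar x^i - \bar x_h^i| \leq Ch^2$, and into Corollary \ref{cor_estimatesforcontrols} to get $\lVert \bar q - \bar q_h \rVert_{\Lone} \leq Ch^2$.

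For the adjoint state estimate in $L^\infty$, the idea is to split the error into a discretization part and a propagation-of-state-error part. Using that $S$ is self-adjoint (noted right after the definition of $S$), I would write
\begin{equation*}
\bar z - \bar z_h = S^*(\bar u - u_d) - S_h^*(\bar u_h - u_d) = (S - S_h)(\bar u - u_d) + S_h(\bar u - \bar u_h).
\end{equation*}
For the first summand, since $u_d \in \Linf$ and $\bar u \in \Linf$ (as $\bar u = Sq \in W^{2,\infty}$ by \Cref{lem_aprioriS}), Lemma \ref{lem_ritz} gives
\begin{equation*}
\lVert (S - S_h)(\bar u - u_d) \rVert_{L^\infty(\Omega)} \leq Ch^2 \lVert \bar u - u_d \rVert_{L^\infty(\Omega)} \leq Ch^2.
\end{equation*}
For the second summand, Lemma \ref{lem_aprioriWoneinf} yields $\lVert S_h(\bar u - \bar u_h) \rVert_{L^\infty(\Omega)} \leq C \lVert \bar u - \bar u_h \rVert_{\LL}$, and Theorem \ref{thm_errorestimatestatesorderh2} bounds this by $Ch^2$. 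Combining both gives $\lVert \bar z - \bar z_h \rVert_{\Linf} \leq Ch^2$.

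There is no real obstacle here; all the heavy lifting was done in Theorem \ref{thm_errorestimatestatesorderh2} and in the structural lemmas before it. The only point to watch is that the $L^\infty$ bound on the adjoint does require the $L^\infty$-regularity of $u_d$ (which is assumed in \Cref{sec_contprob}) to activate Lemma \ref{lem_ritz}; merely an $L^2$ bound on $\bar u - u_d$ together with Lemma \ref{lem_aprioridiffSSh} would only yield $h^2$ in $L^2$, not in $L^\infty$. Once this observation is made, the corollary drops out.
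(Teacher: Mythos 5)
Your proposal is correct and follows essentially the same route as the paper: the first four estimates are obtained exactly as in the paper by inserting Theorem \ref{thm_errorestimatestatesorderh2} into Lemma \ref{lem_estimateforthejumps}, Lemma \ref{lem_estimatesforpartsofthecontrols} and Corollary \ref{cor_estimatesforcontrols}, and your decomposition $(S-S_h)(\bar u-u_d)+S_h(\bar u-\bar u_h)$ coincides with the paper's splitting $(\bar z-R_h\bar z)+(R_h\bar z-\bar z_h)$ since $S_h=R_hS$. The only (immaterial) difference is that you bound $\lVert S_h(\bar u-\bar u_h)\rVert_{\Linf}$ directly via the $W^{1,\infty}$-stability of Lemma \ref{lem_aprioriWoneinf}, whereas the paper splits this term further and uses the embedding $\Hoz\hookrightarrow\Linf$; both are valid.
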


\begin{proof}
For the first four claims combine \Cref{lem_estimateforthejumps}, \Cref{lem_estimatesforpartsofthecontrols} and \Cref{cor_estimatesforcontrols} with \Cref{thm_errorestimatestatesorderh2}.
For the last claim note that \Cref{lem_aprioridiffSSh} and \Cref{lem_aprioriS} imply, due to the embedding $H_0^1(\Omega)\hookrightarrow L^\infty(\Omega)$,
	\begin{equation*}
	\begin{split}
		\Norm{\bar z - \bar z_h}{\Linf} 
		& \leq \Norm{\bar z - R_h\bar z}{\Linf} + \Norm{R_h\bar z - \bar z_h}{\Linf}\\
		& = \Norm{\bar z - R_h\bar z}{\Linf} + \Norm{S_h\bar u-S_h\bar u_h}{\Linf}\\
		& \leq \Norm{\bar z - R_h\bar z}{\Linf}  \\
		& \qquad + C\Norm{(S_h-S)(\bar u-\bar u_h)}{H^1_0(\Omega)} + C\Norm{S(\bar u-\bar u_h)}{H^1_0(\Omega)}\\
		& \leq \Norm{\bar z - R_h\bar z}{\Linf} + Ch^2,
	\end{split}
	\end{equation*}
	where we have also used \Cref{thm_errorestimatestatesorderh2} to deduce the last inequality. The claim follows by taking into account the Ritz projection error from \Cref{lem_ritz}.
\end{proof}

\subsection{Error estimates for piecewise constant control discretization}\label{subsec_error_fd}

In this section we prove convergence rates for \eqref{prob_BVch}.
Let us stress that we can only expect $\lVert \hat q_h - \bar q \rVert_{L^1(\Omega)} = \mathcal{O}(h)$ because for $\bar q = 1_{(\bar x,1)}$, $\bar x\in\Omega$, we have $\lVert 1_{(x_j,1)} - 1_{(\bar x,1)} \rVert_{L^1(\Omega)} = |x_j-\bar x| = \mathcal{O}(h)$ for any node $x_j$. 
We will establish precisely this order of convergence and emphasize that the numerical experiments in Sections \ref{sec:academicaexample} and \ref{sec:naturalexample} indicate that this order is indeed optimal.

As in the variationally discrete case we begin by establishing an error estimate for the state and the adjoint state that holds without any structural assumption on the optimal controls. In fact, we are not able to improve this further. Still, in a second step we can derive an error estimate for the control relying on the same structural assumptions as in the variationally discretized setting.
 
\subsubsection{Basic error estimates for state and adjoint equation}

\begin{lemma} \label{prop:suboptimalstateerrorch}
	Let $h_0>0$ be as in \Cref{thm:existence_probch}. For any $h\in (0,h_0]$ the optimal state $\hat u_h$ associated with the optimal control $\hat q_h$ to \eqref{prob_BVch} satisfies
	\begin{equation*}
	\lVert \hat u_h - \bar u \rVert_{L^2(\Omega)} \leq Ch
	\end{equation*}
	with a constant $C$ independent of $h$.
\end{lemma}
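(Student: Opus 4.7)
The plan is to adapt the argument of \Cref{lem_1steptoh2convforthestates} to the piecewise constant setting. Unlike the variational case, $\bar q\notin Q_h$ in general, so the discrete variational inequality can only be tested with $\Pi_h\bar q$ rather than with $\bar q$ itself. This produces an extra term that must be controlled using the $L^2$-orthogonality \eqref{eq:orthogonality}.

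Specifically, I would test the continuous optimality condition from \Cref{thm_optcond} with $q=\hat q_h\in Q$ and the discrete optimality condition from \Cref{thm_optcondch} with $q_h=\Pi_h\bar q\in Q_h$, and then add. Using the second bullet of \Cref{lem:Piherrorestimate}, i.e.\ $\|(\Pi_h\bar q)'\|_{\M}\le \|\bar q'\|_{\M}$, the resulting right-hand side is non-positive, which after rearranging gives
\[
(\hat z_h-\bar z,\hat q_h-\bar q)_{\LL} \;\leq\; (\hat z_h,\Pi_h\bar q-\bar q)_{\LL}.
\]

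Next, I would insert $R_h\bar z$ on the left-hand side and essentially repeat the calculation from the proof of \Cref{lem_1steptoh2convforthestates}. Using the discrete state equation (note that $\hat q_h-\bar q$ still pairs with test functions from $V_h$ via $\cala$ because $\hat q_h\in Q_h\subset Q$ and $\bar q\in Q$), symmetry of $\cala$, the Ritz projection identities and $R_hS=S_h$, together with the discrete and continuous adjoint equations, I would obtain
\[
(\hat z_h-R_h\bar z,\hat q_h-\bar q)_{\LL} \;=\; \|\hat u_h-\bar u\|_{\LL}^2 + (\bar u-R_h\bar u,\hat u_h-\bar u)_{\LL}.
\]
For the extra term coming from the projection, the orthogonality \eqref{eq:orthogonality} applied with $\Pi_h\hat z_h\in Q_h$ yields
\[
(\hat z_h,\Pi_h\bar q-\bar q)_{\LL} \;=\; (\hat z_h-\Pi_h\hat z_h,\Pi_h\bar q-\bar q)_{\LL}.
\]
By \Cref{lem_aprioriWoneinf} combined with the uniform bound $\|\hat q_h\|_{\BV}\le C$ from \Cref{thm:existence_probch}, the adjoint $\hat z_h=S_h(\hat u_h-u_d)$ is bounded in $W^{1,\infty}(\Omega)$ uniformly in $h$; then the third bullet of \Cref{lem:Piherrorestimate} gives $\|\hat z_h-\Pi_h\hat z_h\|_{L^\infty(\Omega)}\le Ch$, while the first bullet gives $\|\bar q-\Pi_h\bar q\|_{L^1(\Omega)}\le Ch$, so this contribution is $\mathcal{O}(h^2)$.

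Combining all of the above with $\|\bar u-R_h\bar u\|_{\LL}=\|S\bar q-S_h\bar q\|_{\LL}\le Ch^2$ and $\|R_h\bar z-\bar z\|_{\LL}\le Ch^2$ from \Cref{lem_aprioridiffSSh}, and with $\|\hat q_h-\bar q\|_{\LL}\le C$ (from the uniform BV-bound and $\BV\hookrightarrow\LL$), one arrives at
\[
\|\hat u_h-\bar u\|_{\LL}^{\,2} \;\leq\; Ch^2\,\|\hat u_h-\bar u\|_{\LL} + Ch^2,
\]
and Young's inequality then yields the claimed bound $\|\hat u_h-\bar u\|_{\LL}\le Ch$. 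The main obstacle I anticipate is precisely the new term $(\hat z_h,\Pi_h\bar q-\bar q)_{\LL}$: its treatment requires both the $L^2$-orthogonality of $\Pi_h$ (to promote $\hat z_h$ to $\hat z_h-\Pi_h\hat z_h$) and an $h$-uniform $W^{1,\infty}$ bound on the discrete adjoint, both of which are at hand.
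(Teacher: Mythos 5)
Your proof is correct. The first, and decisive, half coincides with the paper's argument: you test the two variational inequalities with $\hat q_h$ and $\Pi_h\bar q$, use $\|(\Pi_h\bar q)'\|_{\M}\le\|\bar q'\|_{\M}$, and then exploit the orthogonality \eqref{eq:orthogonality} together with the uniform $W^{1,\infty}$ bound on $\hat z_h$ and the two $\Pi_h$-error estimates of \Cref{lem:Piherrorestimate} to obtain $(\bar z-\hat z_h,\bar q-\hat q_h)_{\LL}\le Ch^2$; this is precisely \eqref{eq:adjointestimate}. The second half differs only in bookkeeping: the paper introduces the auxiliary state $\tilde u_h:=S_h\bar q$, expands $\|\tilde u_h-\hat u_h\|_{\LL}^2=(S_h^*S_h(\bar q-\hat q_h),\bar q-\hat q_h)_{\LL}$ against $(\bar z-\hat z_h,\bar q-\hat q_h)_{\LL}$ up to $\mathcal{O}(h^2)$ operator-perturbation terms coming from $S-S_h$, and finishes with the triangle inequality $\|\bar u-\hat u_h\|_{\LL}\le\|\bar u-\tilde u_h\|_{\LL}+\|\tilde u_h-\hat u_h\|_{\LL}$; you instead insert $R_h\bar z$ and rerun the Ritz-projection/adjoint-equation identity of \Cref{lem_1steptoh2convforthestates}, which indeed remains valid for the pair $(\hat u_h,\hat z_h)$ because every test function involved stays in $V_h$. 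Both routes reduce the state error to the same duality product and give the same rate; yours recycles more of the existing machinery and avoids the auxiliary state, while the paper's avoids re-deriving the Ritz identity and handles the $R_h\bar z-\bar z$ term implicitly through the operator differences.
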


\begin{proof}
	By \Cref{thm:existence_probch} we have that for any $h\in (0,h_0]$ there exists a unique optimal control $\hat q_h$ to \eqref{prob_BVch} with associated state $\hat u_h$ and adjoint state $\hat z_h$. We test the variational inequality from \Cref{thm_optcondch} with $q_h=\Pi_h\bar q\in Q_h$ and the variational inequality from \Cref{thm_optcond} with $q=\hat q_h$ and obtain
	\begin{align*}
	- \left( \hat z_h, \Pi_h \bar q - \hat q_h \right)_{L^2(\Omega)} + \alpha \lVert \hat q_h' \rVert_{\M} & \leq \alpha \lVert (\Pi_h\bar q)' \rVert_{\M},\\
	- \left( \bar z, \hat q_h - \bar q \right)_{L^2(\Omega)} + \alpha \lVert \bar q' \rVert_{\M} & \leq \alpha \lVert \hat q_h' \rVert_{\M}.
	\end{align*}
	Adding those two lines and using \Cref{lem:gradnormstabilityofPih} we find
	\begin{align*}
	- \left( \hat z_h, \Pi_h \bar q - \hat q_h \right)_{L^2(\Omega)} - \left( \bar z, \hat q_h - \bar q \right)_{L^2(\Omega)} +  \alpha \lVert \hat q_h' \rVert_{\M} + \alpha \lVert \bar q' \rVert_{\M} \\
	\leq \alpha \lVert (\Pi_h\bar q)' \rVert_{\M} + \alpha \lVert \hat q_h' \rVert_{\M} \leq \alpha \lVert \bar q' \rVert_{\M} + \alpha \lVert \hat q_h' \rVert_{\M}.
	\end{align*}
Rearranging terms and using \eqref{eq:orthogonality} leads to
\begin{equation*}
	\left(\bar z- \hat z_h,\bar q - \hat q_h \right)_{L^2(\Omega)} \leq  \left( \hat z_h, \Pi_h \bar q - \bar q \right)_{L^2(\Omega)} =  \left( \hat z_h - \Pi_h \hat z_h, \Pi_h \bar q - \bar q \right)_{L^2(\Omega)}.
\end{equation*}
By \Cref{lem:Piherrorestimate} we obtain
\begin{equation*}
	\left(\bar z- \hat z_h,\bar q - \hat q_h \right)_{L^2(\Omega)} \leq \|\Pi_h\bar q-\bar q\|_{L^1(\Omega)}\|\Pi_h\hat z_h-\hat z_h\|_{L^\infty(\Omega)}\leq h^2 \lVert \hat q_h^\prime\rVert_{\M} \lVert \hat z_h^\prime\rVert_{L^\infty(\Omega)}.
\end{equation*}
Using \Cref{thm:existenceofsolh}, \Cref{lem_aprioriWoneinf} and the boundedness of $\lVert \hat u_h - u_d \rVert_{\LL}$, which is due to \Cref{thm:existence_probch}, we find $\lVert \hat q_h^\prime \rVert_{\M}, \lVert \hat z_h^\prime \rVert_{L^\infty(\Omega)} \leq C$ and thus 
\begin{equation} \label{eq:adjointestimate}
	\left(\bar z- \hat z_h,\bar q - \hat q_h \right)_{L^2(\Omega)} \leq C h^2.
\end{equation}
We introduce the auxiliary state $\tilde u_h:=S_h\bar q$ and observe with the boundedness results from \Cref{thm:existenceofsol} and \Cref{thm:existence_probch} together with \Cref{lem_aprioridiffSSh} that
\begin{equation*}
	\begin{split}
	\lVert \tilde u_h - \hat u_h \rVert_{L^2(\Omega)}^2= &  \left( S_h (\bar q - \hat q_h ), S_h(\bar q - \hat q_h) \right)_{L^2(\Omega)}\\=&\left( S_h^*( S_h \bar q - S_h \hat q_h ), \bar q - \hat q_h \right)_{L^2(\Omega)}\\
	=&\left( S^*( S\bar q -u_d), \bar q - \hat q_h \right)_{L^2(\Omega)}-\left( S_h^*( S_h\hat q_h -u_d), \bar q - \hat q_h \right)_{L^2(\Omega)}\\&-\left( S^*( S-S_h)\bar q, \bar q - \hat q_h \right)_{L^2(\Omega)}-\left( (S_h^*-S^*)u_d, \bar q - \hat q_h \right)_{L^2(\Omega)}\\
	&-\left( (S^*-S_h^*)S_h \bar q , \bar q - \hat q_h \right)_{L^2(\Omega)}\\\leq& \left( \bar z - \hat z_h, \bar q - \hat q_h \right)_{L^2(\Omega)}+Ch^2,
	\end{split}
\end{equation*}
pointing out that due to $S^*=S$ and $S_h^*=S_h$ the same finite element discretization error estimates as for the state equation apply to the adjoint states.
Combining this with \eqref{eq:adjointestimate} leads to $\|\tilde u_h-\hat u_h\|_{L^2(\Omega)}\le Ch$.
Therefore, the assertion follows from
\begin{equation*}
\|\bar u-\hat u_h\|_{L^2(\Omega)}\le\|\bar u-\tilde u_h\|_{L^2(\Omega)}+\|\tilde u_h-\hat u_h\|_{L^2(\Omega)}\le Ch,
\end{equation*}
where the first summand is of order $h^2$ by \Cref{lem_aprioridiffSSh}.
\end{proof}

The preceding lemma has the following consequence.

\begin{corollary}\label{prop:adjointconv1}
	Let $h_0>0$ be from \Cref{thm:existence_probch} and $h\in (0,h_0]$. Let $(\hat u_h,\hat q_h, \hat z_h )$ be the optimal triple of \eqref{prob_BVch} and $(\bar u,\bar q,\bar z)$ the optimal triple of \eqref{prob:BV}. Then there holds
	\begin{equation*}
	\lVert \bar z - \hat z_h \rVert_{W^{1,\infty}(\Omega)}\le Ch
	\end{equation*}
	with a constant $C>0$ independent of $h$.
\end{corollary}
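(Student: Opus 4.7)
The plan is to mimic the argument used in \Cref{lem_aprioridiffzzh} for the variationally discretized case, using the $L^2$-rate of order $\mathcal{O}(h)$ for the state that was just established in \Cref{prop:suboptimalstateerrorch}. Writing $\bar z = S^*(\bar u - u_d)$ and $\hat z_h = S_h^*(\hat u_h - u_d)$, I would insert the intermediate quantity $S^*(\hat u_h - u_d)$ and use the triangle inequality to obtain
\begin{equation*}
\lVert \bar z - \hat z_h\rVert_{W^{1,\infty}(\Omega)}
\leq \lVert (S^*-S_h^*)(\hat u_h-u_d)\rVert_{W^{1,\infty}(\Omega)}
+ \lVert S^*(\bar u - \hat u_h)\rVert_{W^{1,\infty}(\Omega)}.
\end{equation*}

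For the first term I would combine \Cref{lem_ritz} (giving the $L^\infty$-error of order $h^2$), \Cref{lem:W1inftystabofritz} (giving the $W^{1,\infty}$-error of order $h$ on $W^{2,\infty}$-data), and the regularity $S^*(\hat u_h-u_d)\in W^{2,\infty}(\Omega)$ from \Cref{lem_aprioriS}, obtaining a bound $C h\,\lVert \hat u_h - u_d\rVert_{L^\infty(\Omega)}$. To conclude it is $\mathcal{O}(h)$ I need a uniform $L^\infty$-bound on $\hat u_h$; this follows from \Cref{lem_aprioriWoneinf} applied to $\hat u_h = S_h\hat q_h$ together with the uniform $BV$-bound on $\hat q_h$ from \Cref{thm:existence_probch} and the embedding $BV(\Omega)\hookrightarrow L^2(\Omega)$. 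Since $u_d\in L^\infty(\Omega)$ by assumption, this term is therefore bounded by $Ch$.

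For the second term I would use the embedding $H^2(\Omega)\hookrightarrow W^{1,\infty}(\Omega)$ together with \Cref{lem_aprioriS} to get
\begin{equation*}
\lVert S^*(\bar u - \hat u_h)\rVert_{W^{1,\infty}(\Omega)}
\leq C\,\lVert S^*(\bar u - \hat u_h)\rVert_{H^2(\Omega)}
\leq C\,\lVert \bar u - \hat u_h\rVert_{L^2(\Omega)},
\end{equation*}
and then invoke \Cref{prop:suboptimalstateerrorch} to bound this by $Ch$. Combining both estimates finishes the proof.

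I do not expect any real obstacle here: all the ingredients (the finite element error estimates for $S_h$ in $L^\infty$ and for $R_h$ in $W^{1,\infty}$, the regularity of $S^*$, the $L^\infty$-boundedness of $\hat u_h$, and the already established $\mathcal{O}(h)$-rate in $L^2$ for the state) are directly available from earlier results in this section. The only mild subtlety is ensuring the uniform $L^\infty$-bound on $\hat u_h$, but this is immediate from \Cref{lem_aprioriWoneinf} and \Cref{thm:existence_probch}.
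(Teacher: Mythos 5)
Your proposal is correct and coincides with the paper's argument: the paper proves this corollary by noting it is the same proof as \Cref{lem_aprioridiffzzh}, with \Cref{prop:suboptimalstateerrorch} supplying the $\mathcal{O}(h)$ bound for $\lVert \bar u-\hat u_h\rVert_{L^2(\Omega)}$ in place of \Cref{lem_aprioridiffuuh}, which is exactly the decomposition and set of ingredients you use. Your justification of the uniform $L^\infty$-bound on $\hat u_h$ via \Cref{lem_aprioriWoneinf} and the $BV$-bound is a perfectly valid (and slightly more explicit) variant of the paper's appeal to \Cref{lem_ritz}.
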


\begin{proof}
The proof is essentially the same as for Lemma~\ref{lem_aprioridiffzzh}, with Lemma~\ref{prop:suboptimalstateerrorch} replacing Lemma~\ref{lem_aprioridiffuuh}.
\end{proof}

\subsubsection{Improved error estimates under structural assumptions}

Similarly as in the variationally discrete setting we will now 
use the structural Assumptions~\ref{A2} and \ref{A3} to derive an $\Lone$-error estimate for the control. 
We recall that Assumption~\ref{A2} ensures that 
$\bar\Phi$ has only finitely many minima and maxima, which in turn implies 
that the optimal control exhibits only finitely many jumps. 
The main idea underlying the proof of the error estimate is to examine the distance between jump points and jump heights of the continuous and the discrete optimal control. Note that the discrete optimal control $\hat q_h$ is piecewise constant and can only admit jumps at the gridpoints $x_j$ with $|\hat\Phi_h(x_j)|=\alpha$. 
These jumps can only occur close to points where $|\bar\Phi|=\alpha$, i.e., in the vicinity of the $\bar x^i$, $i=1,2,\ldots,m$, as the following result shows. 

\begin{lemma}\label{lem_absvalofhatPhihssalpha}
	Suppose that \Cref{A2} is valid and let $R>0$ be as in \Cref{lem_discretezerosofz}. Then there is $h_0>0$ such that for all $h\in (0,h_0]$ and all $x\in\bar\Omega\setminus\bigcup_{i=1}^m{B_{\frac{R}{2}}(\bar x_i)}$ we have
	\begin{equation*}
	\Abs{\hat \Phi_h(x)}<\alpha.
	\end{equation*}
\end{lemma}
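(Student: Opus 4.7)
The plan is to mimic the argument of Case 2 in the proof of \Cref{lem_discretenumber}, replacing $\bar\Phi_h$ by $\hat\Phi_h$ and using the convergence of $\hat z_h$ to $\bar z$ provided by \Cref{prop:adjointconv1} instead of \Cref{lem_aprioridiffzzh}. The essence is: on the complement of the neighborhoods of the extreme points of $\bar\Phi$, the function $|\bar\Phi|$ is strictly bounded away from $\alpha$ by a uniform margin, and $\hat\Phi_h$ differs from $\bar\Phi$ by at most $\mathcal{O}(h)$ in the sup-norm.

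First I would exploit \Cref{A2} together with the continuity of $\bar\Phi$ on the compact set $K_R := \bar\Omega\setminus\bigcup_{i=1}^m B_{R/2}(\bar x^i)$. Since by \Cref{thm_optcond} we have $\|\bar\Phi\|_{L^\infty(\Omega)}\leq\alpha$ and by \Cref{A2} the set $\{x\in\Omega\colon |\bar\Phi(x)|=\alpha\} = \{\bar x^1,\dots,\bar x^m\}$ is disjoint from $K_R$ (recall from \Cref{lem_discretezerosofz} that $B_R(\bar x^i)\subset\Omega$, so also $\bar x^i\notin\partial\Omega$ where $\bar\Phi(0)=0$ and $\bar\Phi(1)=0$), the continuous function $|\bar\Phi|$ attains its maximum on $K_R$ at some point of $K_R$, and that maximum is strictly smaller than $\alpha$. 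Hence there exists $\varepsilon>0$ such that
\begin{equation*}
|\bar\Phi(x)| \leq \alpha - \varepsilon \qquad \forall\, x\in K_R.
\end{equation*}

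Second, I would convert the adjoint-state convergence into a sup-norm estimate on $\hat\Phi_h - \bar\Phi$. From the definitions $\bar\Phi(x)=\int_0^x\bar z(s)\,ds$ and $\hat\Phi_h(x)=\int_0^x\hat z_h(s)\,ds$ together with $|\Omega|=1$, we obtain
\begin{equation*}
\|\bar\Phi - \hat\Phi_h\|_{L^\infty(\Omega)} \leq \|\bar z - \hat z_h\|_{L^1(\Omega)} \leq \|\bar z - \hat z_h\|_{L^\infty(\Omega)} \leq C h,
\end{equation*}
where the last inequality is \Cref{prop:adjointconv1}.

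Combining the two bounds by the triangle inequality yields, for any $x\in K_R$,
\begin{equation*}
|\hat\Phi_h(x)| \leq |\bar\Phi(x)| + |\bar\Phi(x) - \hat\Phi_h(x)| \leq \alpha - \varepsilon + Ch.
\end{equation*}
Choosing $h_0>0$ small enough so that $Ch_0 < \varepsilon$ gives $|\hat\Phi_h(x)| < \alpha$ for all $h\in(0,h_0]$ and $x\in K_R$, which is the assertion. No step here is a real obstacle, and the only technical point worth stating carefully is that the estimate uses the already-established $W^{1,\infty}$-convergence of the discrete adjoint state from the previous subsection rather than any new ingredient specific to the piecewise constant control discretization.
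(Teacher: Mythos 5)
Your proof is correct and is exactly the argument the paper intends: the paper's proof simply says ``follows along the lines of Case 2 in Lemma \ref{lem_discretenumber},'' and you have carried out that adaptation faithfully, replacing $\bar\Phi_h$, $\bar z_h$ by $\hat\Phi_h$, $\hat z_h$ and invoking \Cref{prop:adjointconv1} in place of \Cref{lem_aprioridiffzzh}. No discrepancies.
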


\begin{proof}
	The proof follows along the lines of Case 2 in Lemma \ref{lem_discretenumber}.
\end{proof}

Next we investigate the behavior of $\hat\Phi_h$ 
inside the balls $B_R(\bar x^i)$. Note that if $|\hat\Phi_h|<\alpha$ in $B_R(\bar x_i)$, then $\hat q_h$ will not admit a jump in $B_R(\bar x^i)$, hence $\hat c_h^j=0$ in \eqref{eq:barqhpdarstellung} for all $j$ with $x_j\in B_R(\bar x^i)$.
We therefore consider points where $|\hat\Phi_h|\geq \alpha$ and remark that points with $|\hat\Phi_h|>\alpha$ can actually exist because $\hat\Phi_h$ is piecewise quadratic.
\begin{lemma} \label{lem:fullydiscretezerosofzch}  
	Let \Cref{A3} hold and let $R >0$ be as in Lemma \ref{lem_discretezerosofz}. There exists an $h_0>0$ such that the following holds for all $h\in (0,h_0]$. If $|\hat\Phi_h(\hat x)|\ge \alpha$ for some $\hat x\in B_R(\bar x^i)$ and some $i\in\{1,2,\dots,m\}$, then $\hat z_h$ has a unique root $\hat x_h^i$ in $B_{R}(\bar x^i)$ and there holds $\hat x_h^i\in B_{\frac{R}{2}}(\bar x^i)$.
	Moreover, the point $\hat x^i_h$ is the unique local maximizer of $|\hat\Phi_h|$ in $B_R(\bar x^i)$ and satisfies $|\hat\Phi_h(\hat x_h^i)|\geq \alpha$ and $|\bar x^i-\hat x^i_h|\leq Ch$ with a constant $C$ not depending on $h$.
\end{lemma}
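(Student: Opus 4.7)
The plan is to exploit the $W^{1,\infty}$-convergence $\hat z_h\to\bar z$ from \Cref{prop:adjointconv1} together with the structural information on $\bar z$ and $\bar\Phi$ near $\bar x^i$ provided by \Cref{A3} and \Cref{lem_discretezerosofz}. Fix $i\in\{1,\ldots,m\}$ and, without loss of generality, assume $\bar\Phi(\bar x^i)=\alpha$; the case $\bar\Phi(\bar x^i)=-\alpha$ is symmetric. Since $\bar x^i$ is then a global maximum of $\bar\Phi$ and $\bar z'(\bar x^i)=\bar\Phi''(\bar x^i)\neq 0$ by \Cref{A3}, we obtain $\bar z'(\bar x^i)<0$. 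After possibly replacing $R$ by a smaller value, which preserves the conclusions of \Cref{lem_discretezerosofz}, I may assume in addition that $\bar z'\leq -\delta<0$ and $\bar\Phi\geq \alpha/2$ on $B_R(\bar x^i)$.

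Next I would show that $\hat z_h$ is strictly decreasing on $B_R(\bar x^i)$. By \Cref{prop:adjointconv1} we have $\|\hat z_h-\bar z\|_{W^{1,\infty}(\Omega)}\leq Ch$, so $\hat z_h'\leq -\delta/2$ a.e.\ on $B_R(\bar x^i)$ for $h$ sufficiently small. Consequently $\hat z_h$ has at most one root in $B_R(\bar x^i)$, and $\hat\Phi_h$ is strictly concave there. Existence of such a root and the estimate $|\bar x^i-\hat x_h^i|\leq Ch$ follow by repeating the intermediate value and mean value arguments from \Cref{lem_discretezerosofz}, now with the $L^\infty$-convergence of $\hat z_h$ to $\bar z$ from \Cref{prop:adjointconv1}. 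For $h$ small this yields $\hat x_h^i\in B_{R/2}(\bar x^i)$, and the strict concavity makes $\hat x_h^i$ the unique global maximizer of $\hat\Phi_h$ on $B_R(\bar x^i)$.

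To obtain $|\hat\Phi_h(\hat x_h^i)|\geq\alpha$ and to transfer the maximization property from $\hat\Phi_h$ to $|\hat\Phi_h|$, I would use $\|\hat\Phi_h-\bar\Phi\|_{L^\infty(\Omega)}\leq \|\hat z_h-\bar z\|_{L^1(\Omega)}\leq Ch$. Combined with $\bar\Phi\geq\alpha/2$ on $B_R(\bar x^i)$, this gives $\hat\Phi_h\geq \alpha/2-Ch>0$ on $B_R(\bar x^i)$ for $h$ sufficiently small. Therefore $|\hat\Phi_h|=\hat\Phi_h$ on $B_R(\bar x^i)$, so the unique maximizer of $\hat\Phi_h$ is also the unique local maximizer of $|\hat\Phi_h|$ there. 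The hypothesis $|\hat\Phi_h(\hat x)|\geq\alpha$ for some $\hat x\in B_R(\bar x^i)$ then reduces to $\hat\Phi_h(\hat x)\geq\alpha$, and maximality yields $\hat\Phi_h(\hat x_h^i)\geq \hat\Phi_h(\hat x)\geq\alpha$, as required.

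The main obstacle I anticipate is controlling the sign of $\hat\Phi_h$ on the whole of $B_R(\bar x^i)$. This is what forces the preparatory shrinkage of $R$ so that $\bar\Phi\geq \alpha/2$ on $B_R(\bar x^i)$, and it is also what allows $|\hat\Phi_h|$ to inherit the strict concavity of $\hat\Phi_h$, so that the unique extremum of $\hat\Phi_h$ is automatically the unique local maximum of $|\hat\Phi_h|$ on the whole ball.
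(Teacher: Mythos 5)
Your proof is correct, but it takes a genuinely different route. The paper never controls the sign of $\hat\Phi_h$ on the ball: it sandwiches $\hat x$ between two adjacent gridpoints $\hat x_{l,h}^i<\hat x<\hat x_{r,h}^i$ in $B_R(\bar x^i)$, invokes the optimality condition $|\hat\Phi_h(x_j)|\leq\alpha$ at gridpoints from \Cref{thm_optcondch} to get $\hat\Phi_h(\hat x_{l,h}^i),\hat\Phi_h(\hat x_{r,h}^i)\leq\alpha\leq\hat\Phi_h(\hat x)$, and so obtains an interior local maximum of $\hat\Phi_h$, hence a root of $\hat z_h$; uniqueness of that root and the bound $|\bar x^i-\hat x_h^i|\leq Ch$ are imported from the arguments of \Cref{lem_discretezerosofz}, and the inclusion $\hat x_h^i\in B_{\frac R2}(\bar x^i)$ comes from \Cref{lem_absvalofhatPhihssalpha} applied to $|\hat\Phi_h(\hat x_h^i)|\geq\alpha$. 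You instead produce the root unconditionally by the intermediate-value argument, upgrade the $W^{1,\infty}$ estimate of \Cref{prop:adjointconv1} to strict concavity of $\hat\Phi_h$ on the ball, and identify the root as the unique global maximizer; the gridpoint condition is not used at all. Your route buys a cleaner structural picture (strict concavity, so maximality is immediate), at the cost of the preliminary shrinkage of $R$ so that $\bar\Phi\geq\alpha/2$ there, which the paper avoids. That shrinkage is legitimate, since the proof of \Cref{lem_discretezerosofz} already permits taking $R$ as small as desired and \Cref{lem_absvalofhatPhihssalpha} survives the reduction, but you should state explicitly that all subsequent occurrences of $R$ (in particular in \Cref{cor:fullydiscjumps}) then refer to the reduced radius; alternatively, note that uniqueness of the local maximizer of $|\hat\Phi_h|$ on the \emph{original} ball already follows from the strict monotonicity of $\hat z_h$ alone, since any interior local maximizer of $|\hat\Phi_h|$ must be a root of $\hat z_h$, so the positivity of $\hat\Phi_h$ is only needed near $\hat x$ and $\hat x_h^i$.
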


\begin{proof}
Without loss of generality let us assume that $h_0\leq R/2$. 
We argue for the case $\hat\Phi_h(\hat x) \geq \alpha$ for some $\hat x\in B_R(\bar x^i)$ and an $i\in\{1,2,\ldots,m\}$. The case $\hat\Phi_h(\hat x) \leq -\alpha$ can be handled analogously. 
Due to $\hat\Phi_h(\hat x) \geq \alpha$ we infer from \Cref{lem_absvalofhatPhihssalpha} that
$\hat x\in B_{\frac{R}{2}}(\bar x^i)$. 
Since $h_0\leq R/2$, we find gridpoints $\hat x^i_{l,h}$ and $\hat x^i_{r,h}$ that satisfy $\hat x^i_{l,h},\hat x^i_{r,h}\in B_R(\bar x^i)$ and $\hat x^i_{l,h} < \hat x < \hat x^i_{r,h}$.
Since $\hat\Phi_h$ 
satisfies $|\hat\Phi_h(x_j)|\leq \alpha$ for all $0\leq j\leq l$, cf. \Cref{thm_optcondch}, 
we have 
\begin{equation*}
\hat\Phi_h(\hat x^i_{l,h}), \hat\Phi_h(\hat x^i_{r,h}) \leq \alpha\leq \hat\Phi_h(\hat x).
\end{equation*}
Hence, the continuous function $\hat\Phi_h$ attains a local maximum at some $\hat x_h^i\in (\hat x^i_{l,h},\hat x^i_{r,h})$.
Clearly, there hold $0 = \hat\Phi_h^\prime(\hat x_h^i) = \hat z_h(\hat x_h^i)$ and 
$\hat\Phi_h(\hat x_h^i) \geq \hat\Phi_h(\hat x) \geq \alpha$, with the latter implying
$\hat x_h^i\in B_{\frac{R}{2}}(\bar x^i)$ by \Cref{lem_absvalofhatPhihssalpha}. 
The uniqueness of the root $\hat x_h^i$ in $B_R(\bar x^i)$ can be established as in the proof of \Cref{lem_discretezerosofz}. The estimate for $|\bar x^i - \hat x_h^i|$ also follows as in the proof of \Cref{lem_discretezerosofz}.
\end{proof}

In the gridpoints we have $|\hat\Phi_h|\le\alpha$. Next we show that $|\hat\Phi_h(x_j)| = \alpha$ for a gridpoint $x_j$ can only hold if $x_j=\hat x_h^i$ for some $i\in\{1,\ldots,m\}$ or if $|\hat\Phi(\hat x_h^i)| > \alpha$ and $x_j$ is close to $\hat x_h^i$.

\begin{corollary} \label{cor:fullydiscjumps}
Let \Cref{A3} hold and let $R$ be as in Lemma \ref{lem:fullydiscretezerosofzch}. 
There exists $h_0>0$ such that the following holds for all $h\in (0,h_0]$. If $|\hat\Phi_h(\hat x)| \geq \alpha$ for some $\hat x\in B_R(\bar x^i)$ and some $i\in \{1,2,\dots,m\}$, then the point $\hat x_h^i \in B_{\frac{R}{2}}(\bar x^i)$ from \Cref{lem:fullydiscretezerosofzch} satisfies exactly one of the following two statements:
\begin{enumerate}
\item $|\hat\Phi_h(\hat x_h^i)| = \alpha$ and $|\hat\Phi_h(y)| < \alpha$ for all $y\in B_R(\bar x^i) \setminus \{\hat x_h^i\}$.
\item $|\hat\Phi_h(\hat x_h^i)| > \alpha$ and there exist exactly two points 
$y_l^i,y_r^i\in B_R(\bar x^i)$ such that $|\hat\Phi_h(y_l^i)|=|\hat\Phi_h(y_r^i)|=\alpha$
and $\hat x_h^i \in (y_l^i, y_r^i)$.
In addition, $y_l^i, y_r^i\in [\hat x_{l,h}^i,\hat x_{r,h}^i]$, where
$\hat x_{l,h}^i,\hat x_{r,h}^i \in B_R(\bar x^i)$ are the gridpoints closest to $\hat x_h^i$ that satisfy $\hat x_{l,h}^i<\hat x_h^i<\hat x_{r,h}^i$. 
Furthermore, there holds $|\hat\Phi_h(y)|<\alpha$ for all $y\in B_R(\bar x_i)\setminus[\hat x_{l,h}^i,\hat x_{r,h}^i]$.
\end{enumerate}
Moreover, we have
\begin{equation} \label{eq:cor:jumpsofqh}
\supp(\hat q_h') \subset \left\lbrace x \in \mathcal{N}_h \colon \Abs{\hat\Phi_h(x)} = \alpha\right\rbrace
\quad\text{and}\quad
\# \left\lbrace x \in \mathcal{N}_h \colon \Abs{\hat\Phi_h(x)} = \alpha\right\rbrace \leq 2m.
\end{equation}
\end{corollary}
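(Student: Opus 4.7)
My approach is to exploit that $\hat z_h\in V_h$ is piecewise linear, so $\hat\Phi_h(x)=\int_0^x\hat z_h(s)\,ds$ is a continuous, piecewise quadratic $C^1$ function. I will work in each ball $B_R(\bar x^i)$ separately, using \Cref{lem:fullydiscretezerosofzch} which supplies a unique root $\hat x_h^i\in B_{\frac{R}{2}}(\bar x^i)$ of $\hat z_h=\hat\Phi_h'$ that is the unique local extremum of $|\hat\Phi_h|$ in $B_R(\bar x^i)$ and satisfies $|\hat\Phi_h(\hat x_h^i)|\geq\alpha$. The first observation I would record is that since $\hat z_h$ has no other zero in $B_R(\bar x^i)$ and is continuous, it keeps opposite constant signs on the two open subintervals of $B_R(\bar x^i)$ separated by $\hat x_h^i$. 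Consequently $|\hat\Phi_h|$ is strictly decreasing as one moves away from $\hat x_h^i$ within $B_R(\bar x^i)$, and the entire dichotomy can be read off from this strict monotonicity.

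Case~1, where $|\hat\Phi_h(\hat x_h^i)|=\alpha$, follows immediately from strict monotonicity, yielding $|\hat\Phi_h(y)|<\alpha$ on $B_R(\bar x^i)\setminus\{\hat x_h^i\}$. For Case~2, $|\hat\Phi_h(\hat x_h^i)|>\alpha$ together with \Cref{lem_absvalofhatPhihssalpha}, which ensures $|\hat\Phi_h|<\alpha$ on $\bar\Omega\setminus\bigcup_i B_{\frac{R}{2}}(\bar x^i)$, lets me invoke the intermediate value theorem on each side of $\hat x_h^i$ inside $B_R(\bar x^i)$ to produce exactly one $y_l^i<\hat x_h^i<y_r^i$ with $|\hat\Phi_h|=\alpha$, with no further such points by monotonicity. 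The enclosure $y_l^i,y_r^i\in[\hat x_{l,h}^i,\hat x_{r,h}^i]$ is forced by the gridpoint bound $|\hat\Phi_h(x_j)|\leq\alpha$ from \Cref{thm_optcondch}: if, for instance, $y_r^i>\hat x_{r,h}^i$, strict monotonicity would entail $|\hat\Phi_h(\hat x_{r,h}^i)|>\alpha$, a contradiction. The strict inequality $|\hat\Phi_h|<\alpha$ outside $[\hat x_{l,h}^i,\hat x_{r,h}^i]$ within $B_R(\bar x^i)$ then follows again from monotonicity, together with $|\hat\Phi_h(y_l^i)|=|\hat\Phi_h(y_r^i)|=\alpha$.

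For the final assertions on $\supp(\hat q_h')$, I would combine \Cref{cor_Phihattouchesboundhat}, which already provides the inclusion $\supp(\hat q_h')\subset\{x_j\in\mathcal{N}_h:|\hat\Phi_h(x_j)|=\alpha\}$, with \Cref{lem_absvalofhatPhihssalpha} to restrict the candidate nodes to $\bigcup_{i=1}^m B_R(\bar x^i)$. Within a single ball $B_R(\bar x^i)$ the dichotomy limits the number of nodes at which $|\hat\Phi_h|=\alpha$ to at most two: in Case~1 this can only be $\hat x_h^i$ itself (provided it happens to be a node), while in Case~2 the only candidates are $\hat x_{l,h}^i$ and $\hat x_{r,h}^i$ (precisely when they coincide with $y_l^i$ or $y_r^i$). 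Since the $m$ balls are pairwise disjoint, summing yields the bound $2m$. The step I expect to require the most care is excluding a third crossing of the level $\pm\alpha$ inside $B_R(\bar x^i)$, which is ultimately resolved by the single-zero property of $\hat z_h$ in $B_R(\bar x^i)$ given by \Cref{lem:fullydiscretezerosofzch}, since any third crossing would force another sign change and therefore another zero of $\hat z_h=\hat\Phi_h'$.
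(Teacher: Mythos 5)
Your proof is correct and follows essentially the same route as the paper's: both rest on the uniqueness of the root of $\hat z_h$ in $B_R(\bar x^i)$ from \Cref{lem:fullydiscretezerosofzch}, the intermediate value theorem, the gridpoint bound $|\hat\Phi_h(x_j)|\leq\alpha$ from \Cref{thm_optcondch}, and \Cref{lem_absvalofhatPhihssalpha}, and your strict-monotonicity formulation is just the contrapositive of the paper's Rolle's-theorem contradiction. The only loose phrase is that $|\hat\Phi_h|$ (as opposed to $\hat\Phi_h$ itself) need not be strictly decreasing away from $\hat x_h^i$ once $\hat\Phi_h$ changes sign; the level $-\alpha$ is nevertheless excluded inside $B_R(\bar x^i)$ because a monotone $\hat\Phi_h$ that reached $-\alpha$ would remain below it up to the edge of the ball, contradicting \Cref{lem_absvalofhatPhihssalpha}.
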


\begin{proof}
The first part of \eqref{eq:cor:jumpsofqh} is just a restatement of \Cref{cor_Phihattouchesboundhat} and the second part of \eqref{eq:cor:jumpsofqh} follows from the main statement in combination with 
\Cref{lem_absvalofhatPhihssalpha}.

We assume $\hat\Phi_h(\hat x) \geq \alpha$; the case $\hat\Phi_h(\hat x) \leq - \alpha$ is treated analogously.

We first consider statement 1. Assume $\hat\Phi(\hat x_h^i) = \alpha$. Since $\hat x_h^i$ is the unique maximizer of $\hat\Phi$ on $B_R(\bar x^i)$ by \Cref{lem:fullydiscretezerosofzch} the statement readily follows.

To establish 2, let us assume that $\hat\Phi_h(\hat x_h^i) > \alpha$. Let $x_l,x_r\in B_R(\bar x^i)$ be the two gridpoints closest to $\hat x_h^i$ that satisfy $\hat x_{l,h}^i < \hat x_h^i < \hat x_{r,h}^i$. The existence of such $\hat x_{l,h}^i,\hat x_{r,h}^i$ is ensured if $h_0 < R/2$. 
Since $\hat\Phi_h(\hat x_{l,h}^i), \hat\Phi_h(\hat x_{r,h}^i) \leq \alpha$, the intermediate value theorem implies that there exist $y_l^i\in [\hat x_{l,h}^i, \hat x_h^i)$ and $y_r^i \in (\hat x_h^i, \hat x_{r,h}^i]$ such that $\hat\Phi_h(y_l^i) = \hat\Phi_h(y_r^i) = \alpha$. In particular, we have $\hat x_h^i \in (y_l^i, y_r^i)$.

To demonstrate uniqueness of $y_l^i,y_r^i$ in $B_R(\bar x^i)$, assume there were an $x\in B_R(\bar x^i)\setminus\{y_l^i,y_r^i\}$ with $\hat\Phi_h(x)=\alpha$.
If $x<\hat x_h^i$, then Rolle's theorem yields a $\xi$ between 
$x$ and $y_l^i$ with $\hat z_h(\xi)=0$, which contradicts the uniqueness of the root $\hat x_h^i$ of $\hat z_h$ in $B_R(\bar x_i)$ proven in Lemma \ref{lem:fullydiscretezerosofzch}. If $x>\hat x_h^i$, then we readily obtain a similar contradiction.
Since $x=\hat x_h^i$ is excluded due to $\hat\Phi_h(\hat x_h^i)>\alpha$, we conclude that $\hat\Phi_h(x)=\alpha$ for $x\in B_R(\bar x^i)$ if and only if $x\in\{y_l^i,y_r^i\}$. 

To prove that $|\hat\Phi_h(x)|<\alpha$ for all $x\in B_R(\bar x_i)\setminus[\hat x_{l,h}^i,\hat x_{r,h}^i]$, we assume without loss of generality that $h_0\leq R/4$ so that 
we are able to find gridpoints $\tilde x_{l,h}^i,\tilde x_{r,h}^i\in B_R(\bar x^i)$ with $\tilde x_{l,h}^i<\hat x_{l,h}^i\leq y_l^i$ and $y_r^i\leq \hat x_{r,h}^i<\tilde x_{r,h}^i$. Because we have established that
$\hat\Phi_h(x)=\alpha$ for $x\in B_R(\bar x^i)$ if and only if $x\in\{y_l^i,y_r^i\}$, 
there holds $\hat\Phi_h(\tilde x_{l,h}^i),\hat\Phi_h
(\tilde x_{r,h}^i)<\alpha$. Thus, a continuity argument supplies $\hat\Phi_h(x)<\alpha$ for all $x\in B_R(\bar x^i)\setminus [\hat y_{l}^i,\hat y_{r}^i]$. The claim follows since 
$[\hat y_{l}^i,\hat y_{r}^i]\subset [\hat x_{l,h}^i,\hat x_{r,h}^i]$.
\end{proof}

Summarizing we now know that $\hat q_h$ cannot jump outside of any $B_R(\bar x ^i)$, $1\leq i\leq m$, and that inside every $B_R(\bar x^i)$ jumps can only occur at $\hat x_h^i$ (Case~1) or at any of the two points $y_l^i$ and $y_r^i$ (Case~2), $1\leq i\leq m$. In addition, such a jump can only occur if the respective point is a gridpoint. 
In contrast, in the variational discrete setting the jumps of $\bar q_h$ are not restricted to gridpoints. For clarification we point out that there might well be situations, for large $h$, where the continuous optimal control $\bar q$ jumps at $\bar x^i$, but the discrete optimal control $\hat q_h$ does not admit a jump in $B_R(\bar x^i)$. Vice versa, for large $h$ it may happen that $\hat q_h$ exhibits one or two jumps in $B_R(\bar x^i)$, but $\bar q$ does not jump in $B_R(\bar x^i)$.

To obtain a convergence result, we need to estimate the difference in the jump points and the corresponding coefficients. In the remainder of this section we use the following notation. We write $\hat x_{l,h}^i, \hat x_{r,h}^i$ according to \Cref{cor:fullydiscjumps} if the second case of \Cref{cor:fullydiscjumps} applies. 
If the first case of \Cref{cor:fullydiscjumps} applies, then $\hat x_{l,h}^i, \hat x_{r,h}^i$ denote the left and right neighbor of $\hat x_h^i$, provided that $\hat x_h^i$ itself is not a  gridpoint. If it is a gridpoint, then we denote by $x_{l,h}^i$ its left neighbor and set $\hat x_{r,h}^i:=\hat x_h^i$.
If neither case applies, then we have $|\hat\Phi_h| < \alpha$ in $B_R(\bar x^i)$. In this case, $\hat x_{l,h}^i, \hat x_{r,h}^i$ are taken to be the gridpoints adjacent to each other and satisfying $\bar x^i\in [\hat x_{l,h}^i, \hat x_{r,h}^i)$.
We observe that $|\hat x_{l,h}^i-\hat x_h^i|,|\hat x_{r,h}^i-\hat x_h^i|\leq h$ whenever $\hat x_h^i$ exists, and $|\hat x_{l,h}^i-\bar x^i|,|\hat x_{r,h}^i-\bar x^i|\leq h$ otherwise. In view of \Cref{lem:fullydiscretezerosofzch} this immediately
implies the following result, that does not require a proof.
\begin{lemma} \label{prop:jumpsplitting}
Let \Cref{A3} hold. There exist $C,h_0>0$ such that for all $h\in (0,h_0]$ and $i=1,2,\dots,m$ we have $|\bar x^i - \hat x_{l,h}^i|, |\bar x^i - \hat x_{r,h}^i| \leq Ch$.
\end{lemma}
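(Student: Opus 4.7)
The plan is to proceed by a straightforward case distinction based on how $\hat x_{l,h}^i$ and $\hat x_{r,h}^i$ are defined in the paragraph preceding the lemma. All three cases will be settled by combining the mesh estimate $|\hat x_{l,h}^i-\hat x_{r,h}^i|\leq h$ with either the bound $|\bar x^i-\hat x_h^i|\leq Ch$ from \Cref{lem:fullydiscretezerosofzch} or the containment $\bar x^i\in[\hat x_{l,h}^i,\hat x_{r,h}^i)$, and then applying the triangle inequality. Since the estimate is symmetric in $\hat x_{l,h}^i$ and $\hat x_{r,h}^i$, I will only argue for $\hat x_{l,h}^i$.

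In the first scenario, Case~1 of \Cref{cor:fullydiscjumps} applies, and $\hat x_{l,h}^i$ is the gridpoint immediately to the left of $\hat x_h^i$ (or equals $\hat x_h^i$ if $\hat x_h^i\in\mathcal{N}_h$). Then $|\hat x_{l,h}^i-\hat x_h^i|\leq h$ by definition of the mesh width, and \Cref{lem:fullydiscretezerosofzch} supplies a constant $C_1>0$ (independent of $h$) with $|\bar x^i-\hat x_h^i|\leq C_1 h$, so the triangle inequality yields $|\bar x^i-\hat x_{l,h}^i|\leq (C_1+1)h$. In the second scenario, Case~2 of \Cref{cor:fullydiscjumps} applies, and $\hat x_{l,h}^i$ is the gridpoint closest to $\hat x_h^i$ with $\hat x_{l,h}^i<\hat x_h^i$; the exact same argument works verbatim.

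In the third scenario neither Case~1 nor Case~2 of \Cref{cor:fullydiscjumps} applies, meaning $|\hat\Phi_h|<\alpha$ throughout $B_R(\bar x^i)$, and $\hat x_{l,h}^i,\hat x_{r,h}^i\in\mathcal{N}_h$ are adjacent gridpoints chosen so that $\bar x^i\in[\hat x_{l,h}^i,\hat x_{r,h}^i)$. Here no information about $\hat x_h^i$ is available (it need not exist), but we directly have $|\bar x^i-\hat x_{l,h}^i|\leq|\hat x_{r,h}^i-\hat x_{l,h}^i|\leq h$. Setting $C:=C_1+1$ and choosing $h_0>0$ small enough that the setups of \Cref{lem:fullydiscretezerosofzch} and \Cref{cor:fullydiscjumps} are simultaneously valid for every $i\in\{1,\dots,m\}$ (which is possible because $m$ is finite by \Cref{A2}) concludes the proof.

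There is no substantial obstacle: the whole argument is bookkeeping around the case distinction, which is why the authors remark that the result "does not require a proof." The only subtlety worth flagging is that the constant $h_0$ must be taken uniformly over $i=1,\dots,m$, but this is harmless since $m$ is finite.
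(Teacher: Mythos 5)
Your proposal is correct and is exactly the argument the paper intends: the authors state the lemma "does not require a proof" precisely because it follows from the observation that $\hat x_{l,h}^i,\hat x_{r,h}^i$ are within $h$ of $\hat x_h^i$ (when the latter exists) or bracket $\bar x^i$ within one mesh cell (otherwise), combined with $|\bar x^i-\hat x_h^i|\leq Ch$ from \Cref{lem:fullydiscretezerosofzch} and the triangle inequality. Your case bookkeeping fills in exactly this reasoning, so there is nothing to add.
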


By virtue of the inclusion in \eqref{eq:cor:jumpsofqh} the preceding discussion furthermore shows that $\hat q_h$ can be represented as follows. There exist real numbers $\hat a_h,\hat c_{l,h}^i,\hat c_{r,h}^i$, $1\leq i\leq m$, such that
\begin{equation}\label{eq:barqhpdarstellung1}
\hat q_h  = \hat a_h + \sum_{i=1}^m (\hat c_{l,h}^i 1_{(\hat x_{l,h}^i,1)}+\hat c_{r,h}^{i}1_{(\hat x_{r,h}^i,1)}),\qquad
\hat q_h'  = \sum_{i=1}^m(\hat c_{l,h}^i \delta_{\hat x_{l,h}^i}+\hat c_{r,h}^{i}\delta_{\hat x_{r,h}^i}),
\end{equation}
where some of the coefficients may be zero. 




We estimate the difference between the jump heights of the optimal control $\bar q$ and its discrete counterpart $\hat q_h$.

\begin{lemma} \label{lem_estimateforthejumps2}
Suppose that \Cref{A3} is valid.
Then there exist $C,h_0>0$ such that for all $h\in(0,h_0]$ the optimal controls $\hat q_h= \hat a_h + \sum_{i=1}^m (\hat c^i_{l,h} 1_{(\hat x_{l,h}^i,1)}+\hat c^i_{r,h} 1_{(\hat x_{r,h}^i,1)} )$ and $\bar q = \bar a + \sum_{i=1}^m \bar c^i 1_{(\bar x^i,1)}$ satisfy
\begin{equation*}
\sum_{i=1}^m |\bar c^i-(\hat c^i_{l,h}+\hat c^{i}_{r,h})| \leq C h.
\end{equation*}
\end{lemma}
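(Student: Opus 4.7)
The plan is to mimic the proof of Lemma~\ref{lem_estimateforthejumps} almost verbatim with $\hat q_h,\hat u_h$ replacing $\bar q_h,\bar u_h$, and then to absorb the resulting state-error term using the (weaker) estimate $\Norm{\bar u-\hat u_h}{\LL}\le Ch$ from Lemma~\ref{prop:suboptimalstateerrorch}. Concretely, I would fix $R>0$ from Lemma~\ref{lem:fullydiscretezerosofzch} and, for each $i\in\{1,\ldots,m\}$, choose $g^i\in C_c^\infty(\Omega)$ with $g^i\equiv 1$ on $B_{R/2}(\bar x^i)$ and $\supp g^i\subset B_{3R/4}(\bar x^i)$; after possibly shrinking $h_0$, the balls $B_{3R/4}(\bar x^i)$ are pairwise disjoint and contained in $\Omega$. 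By Lemma~\ref{prop:jumpsplitting} and Lemma~\ref{lem_absvalofhatPhihssalpha}, both $\hat x_{l,h}^i$ and $\hat x_{r,h}^i$ lie in $B_{R/2}(\bar x^i)$ (where $g^i\equiv 1$) and $\hat q_h$ has no further jumps inside $\supp g^i$. Integration by parts for BV functions then yields
\begin{equation*}
\bar c^i-(\hat c_{l,h}^i+\hat c_{r,h}^i) \;=\; \int_\Omega g^i\,d(\bar q'-\hat q_h') \;=\; -(\bar q-\hat q_h,(g^i)')_{\LL}.
\end{equation*}

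Next I would insert the Ritz projection $R_h(g^i)'\in V_h$ and split the inner product as
\begin{equation*}
\bigl|(\bar q-\hat q_h,(g^i)')_{\LL}\bigr| \;\le\; \bigl|(\bar q-\hat q_h,R_h(g^i)')_{\LL}\bigr|+\bigl|(\bar q-\hat q_h,(g^i)'-R_h(g^i)')_{\LL}\bigr|.
\end{equation*}
The second summand is $\mathcal{O}(h^2)$ by Lemma~\ref{lem_aprioridiffSSh} and the uniform $BV$-bound on $\hat q_h$ from Theorem~\ref{thm:existence_probch}. For the first summand I would use the continuous and discrete state equations together with the symmetry of $\cala$ to rewrite it, exactly as in Lemma~\ref{lem_estimateforthejumps}, as
\begin{equation*}
\cala(\bar u,R_h(g^i)'-(g^i)')+\cala(\bar u-\hat u_h,(g^i)')=(\bar q,R_h(g^i)'-(g^i)')_{\LL}+\int_\Omega(\bar u-\hat u_h)\bigl(-(a(g^i)'')'+d_0(g^i)'\bigr)\,dx,
\end{equation*}
whose modulus is bounded by $C\bigl(h^2+\Norm{\bar u-\hat u_h}{\LL}\bigr)$.

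Summing over $i=1,\ldots,m$ and plugging in $\Norm{\bar u-\hat u_h}{\LL}\le Ch$ from Lemma~\ref{prop:suboptimalstateerrorch} produces the bound $\sum_{i=1}^m|\bar c^i-(\hat c_{l,h}^i+\hat c_{r,h}^i)|\le C(h^2+h)\le Ch$, as claimed. The one step that requires genuine care rather than a mechanical transcription of Lemma~\ref{lem_estimateforthejumps} is the identification in the first display: one has to verify that $\int_\Omega g^i\,d\hat q_h'=\hat c_{l,h}^i+\hat c_{r,h}^i$ in every scenario distinguished by Corollary~\ref{cor:fullydiscjumps}, including the degenerate situation in which $\hat q_h$ has no jumps at all in $B_R(\bar x^i)$ (in which case both coefficients vanish and the identity is trivial). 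The loss of one order compared with Lemma~\ref{lem_estimateforthejumps} is attributable entirely to the replacement of the $\mathcal{O}(h^2)$ state estimate of the variational setting by the only $\mathcal{O}(h)$ bound available here.
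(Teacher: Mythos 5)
Your proposal is correct and follows exactly the route the paper takes: it reruns the argument of Lemma~\ref{lem_estimateforthejumps} with $\hat q_h,\hat u_h$ in place of $\bar q_h,\bar u_h$ to obtain the bound $C\bigl(h^2+\Norm{\bar u-\hat u_h}{\LL}\bigr)$ and then invokes Lemma~\ref{prop:suboptimalstateerrorch}. The only difference is that you spell out the (worthwhile) verification that $\int_\Omega g^i\,d\hat q_h'=\hat c_{l,h}^i+\hat c_{r,h}^i$ in all cases of Corollary~\ref{cor:fullydiscjumps}, which the paper leaves implicit.
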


\begin{proof}
The proof of Lemma \ref{lem_estimateforthejumps} remains valid for $\hat q_h,\hat u_h,\hat z_h$ and yields
\begin{equation*}
\sum_{i=1}^m |\bar c^i-(\hat c^i_{l,h}+\hat c^{i}_{r,h})| \leq C \left( h^2 +  \lVert \bar u - \hat u_h \rVert_{\LL} \right).
\end{equation*}
Applying \Cref{prop:suboptimalstateerrorch} establishes the desired estimate.
\end{proof}

The difference between the offsets and the jump positions of $\bar q$ and $\hat q_h$ can be estimated as follows.

\begin{lemma} \label{lem_estimatesforpartsofthecontrols_cd}
	Suppose that \Cref{A3} is valid.
	Then there exist $C,h_0>0$ such that for all $h\in(0,h_0]$ the optimal controls $\hat q_h= \hat a_h + \sum_{i=1}^m (\hat c^i_{l,h} 1_{(\hat x_{l,h}^i,1)}+\hat c^i_{r,h} 1_{(\hat x_{r,h}^i,1)} )$ and $\bar q = \bar a + \sum_{i=1}^m \bar c^i 1_{(\bar x^i,1)}$ satisfy
	\begin{equation*}
	|\bar a-\hat a_h| + \sum_{i=1}^m \left( |\bar x^i-\hat x_{l,h}^i|+|\bar x^{i}-\hat x_{r,h}^i| \right) \leq C h.
	\end{equation*}
\end{lemma}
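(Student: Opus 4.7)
The estimates $|\bar x^i-\hat x_{l,h}^i|,|\bar x^{i}-\hat x_{r,h}^i|\leq Ch$ for $1\leq i\leq m$ have already been established in \Cref{prop:jumpsplitting}, so only the bound for $|\bar a-\hat a_h|$ remains to be proven. The plan is to adapt the argument used in \Cref{lem_estimatesforpartsofthecontrols} to the fully discrete setting, keeping track of the fact that each ``jump'' of $\hat q_h$ now splits into at most two contributions located at the gridpoints $\hat x_{l,h}^i$ and $\hat x_{r,h}^i$, and that the jump positions agree with $\bar x^i$ only up to $\mathcal{O}(h)$ instead of $\mathcal{O}(h^2)$.

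Writing $\mathcal{S}:=S^*S$ and $\mathcal{S}_h:=S_h^*S_h$ and using the representations \eqref{eq:barqdarstellung} and \eqref{eq:barqhpdarstellung1}, one computes
\begin{equation*}
\begin{split}
\bar z-\hat z_h &= (\bar a-\hat a_h)\mathcal{S}1+\hat a_h(\mathcal{S}-\mathcal{S}_h)1+\sum_{i=1}^m\bigl(\bar c^i-(\hat c_{l,h}^i+\hat c_{r,h}^i)\bigr)\mathcal{S}1_{(\bar x^i,1)}\\
&\quad+\sum_{i=1}^m(\hat c_{l,h}^i+\hat c_{r,h}^i)(\mathcal{S}-\mathcal{S}_h)1_{(\bar x^i,1)}\\
&\quad+\sum_{i=1}^m\hat c_{l,h}^i\mathcal{S}_h(1_{(\bar x^i,1)}-1_{(\hat x_{l,h}^i,1)})+\sum_{i=1}^m\hat c_{r,h}^i\mathcal{S}_h(1_{(\bar x^i,1)}-1_{(\hat x_{r,h}^i,1)})\\
&\quad-(S^*-S_h^*)u_d.
\end{split}
\end{equation*}
By Theorems \ref{thm_optcond} and \ref{thm_optcondch} both $\bar z$ and $\hat z_h$ have vanishing mean (since $\bar\Phi(1)=\hat\Phi_h(1)=0$). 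Integrating the above identity over $\Omega$ and isolating the term $(\bar a-\hat a_h)\int_\Omega\mathcal{S}1\,dx=(\bar a-\hat a_h)\|S1\|_{\LL}^2$, which is nonzero because $S$ is an isomorphism, yields
\begin{equation*}
\begin{split}
|\bar a-\hat a_h|\leq C\Bigl[&|\hat a_h|\,\|(\mathcal{S}-\mathcal{S}_h)1\|_{L^1(\Omega)}+\sum_{i=1}^m|\bar c^i-(\hat c_{l,h}^i+\hat c_{r,h}^i)|\\
&+\sum_{i=1}^m|\hat c_{l,h}^i+\hat c_{r,h}^i|\,\|(\mathcal{S}-\mathcal{S}_h)1_{(\bar x^i,1)}\|_{L^1(\Omega)}\\
&+\sum_{i=1}^m|\hat c_{l,h}^i|\,\|\mathcal{S}_h(1_{(\bar x^i,1)}-1_{(\hat x_{l,h}^i,1)})\|_{L^1(\Omega)}\\
&+\sum_{i=1}^m|\hat c_{r,h}^i|\,\|\mathcal{S}_h(1_{(\bar x^i,1)}-1_{(\hat x_{r,h}^i,1)})\|_{L^1(\Omega)}+\|(S^*-S_h^*)u_d\|_{L^1(\Omega)}\Bigr].
\end{split}
\end{equation*}

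Now each term is bounded as follows. \Cref{lem_aprioridiffSSh} combined with $S=S^*$, $S_h=S_h^*$ gives $\|\mathcal{S}-\mathcal{S}_h\|_{\Lin(\LL,\LL)}\leq Ch^2$, so the first, third and last terms are of order $\mathcal{O}(h^2)$. The second sum is $\mathcal{O}(h)$ by \Cref{lem_estimateforthejumps2}. For the fourth and fifth sums, the stability bound $\|\mathcal{S}_h\|_{\Lin(L^1(\Omega),L^1(\Omega))}\leq C$ (exactly as derived in the proof of \Cref{lem_estimatesforpartsofthecontrols} from $H^1$-energy estimates and the embedding $L^1(\Omega)\hookrightarrow H^{-1}(\Omega)$) together with $\|1_{(\bar x^i,1)}-1_{(\hat x_{l,h}^i,1)}\|_{L^1(\Omega)}=|\bar x^i-\hat x_{l,h}^i|$ (and analogously for the right neighbor) reduces these contributions to $\sum_i(|\bar x^i-\hat x_{l,h}^i|+|\bar x^i-\hat x_{r,h}^i|)$, which is $\mathcal{O}(h)$ by \Cref{prop:jumpsplitting}. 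The boundedness of $|\hat a_h|$ and of the coefficients $|\hat c_{l,h}^i|,|\hat c_{r,h}^i|$ uniformly in $h$ follows from $\|\hat q_h\|_{BV(\Omega)}\leq C$, cf. \Cref{thm:existence_probch}. Collecting these estimates yields $|\bar a-\hat a_h|\leq Ch$, which together with \Cref{prop:jumpsplitting} establishes the claim.

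The main obstacle, compared with the proof of \Cref{lem_estimatesforpartsofthecontrols}, is that the jump position error is only $\mathcal{O}(h)$ rather than $\mathcal{O}(h^2)$ and that each continuous jump at $\bar x^i$ may correspond to two discrete jumps at the neighboring gridpoints. This forces a more elaborate decomposition of $\bar q-\hat q_h$ and limits the final rate to $\mathcal{O}(h)$; the structure of the proof itself, however, is parallel to the variationally discretized case.
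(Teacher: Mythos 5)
Your proof is correct and follows exactly the route the paper intends: the paper's own proof simply states that the bound on $|\bar a-\hat a_h|$ "can be accomplished almost verbatim as in Lemma \ref{lem_estimatesforpartsofthecontrols}," and your write-up is precisely that adaptation, with the correct splitting of each continuous jump into the two discrete contributions at $\hat x_{l,h}^i$ and $\hat x_{r,h}^i$ and the correct identification of the $\mathcal{O}(h)$ terms (jump-height differences via \Cref{lem_estimateforthejumps2} and jump-position differences via \Cref{prop:jumpsplitting}) that limit the final rate.
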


\begin{proof}
	In view of \Cref{prop:jumpsplitting} it only remains to 
	estimate the difference $|\bar a-\hat a_h|$. This can be accomplished almost verbatim as in Lemma \ref{lem_estimatesforpartsofthecontrols}.
\end{proof}

We obtain the following error estimate for the control in $\Lone$.
\begin{corollary}\label{cor_errorestimatestatesorderh_impr}
	Suppose that \Cref{A3} is valid.
	Then there exist $C,h_0>0$ such that for all $h\in(0,h_0]$ we have
	\begin{equation*}
	\Norm{\bar q-\hat q_h}{\Lone}\leq Ch.
	\end{equation*}		
\end{corollary}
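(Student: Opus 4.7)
My plan is to prove the corollary by a direct computation analogous to the proof of \Cref{cor_estimatesforcontrols} in the variationally discrete setting, but using the richer representation of $\hat q_h$ from \eqref{eq:barqhpdarstellung1} and combining it with \Cref{lem_estimateforthejumps2} and \Cref{lem_estimatesforpartsofthecontrols_cd}. The overall idea is to rewrite the difference $\bar q - \hat q_h$ so that each term can be estimated in $L^1$ either by a difference of coefficients, a difference of jump locations, or a difference of offsets.

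First I would use the representations
$\bar q = \bar a + \sum_{i=1}^m \bar c^i 1_{(\bar x^i,1)}$ and
$\hat q_h = \hat a_h + \sum_{i=1}^m (\hat c^i_{l,h} 1_{(\hat x^i_{l,h},1)} + \hat c^i_{r,h} 1_{(\hat x^i_{r,h},1)})$
from \eqref{eq:barqdarstellung} and \eqref{eq:barqhpdarstellung1}. For each $i\in\{1,\dots,m\}$ I would insert a zero to write
\begin{equation*}
\bar c^i 1_{(\bar x^i,1)} - \hat c^i_{l,h} 1_{(\hat x^i_{l,h},1)} - \hat c^i_{r,h} 1_{(\hat x^i_{r,h},1)}
= \bigl(\bar c^i - \hat c^i_{l,h} - \hat c^i_{r,h}\bigr) 1_{(\bar x^i,1)} + \hat c^i_{l,h}\bigl(1_{(\bar x^i,1)} - 1_{(\hat x^i_{l,h},1)}\bigr) + \hat c^i_{r,h}\bigl(1_{(\bar x^i,1)} - 1_{(\hat x^i_{r,h},1)}\bigr).
\end{equation*}
Taking the $L^1$-norm, using $\lVert 1_{(\bar x^i,1)}\rVert_{L^1(\Omega)}\leq 1$ and $\lVert 1_{(\bar x^i,1)} - 1_{(y,1)}\rVert_{L^1(\Omega)} = |\bar x^i - y|$, and applying the triangle inequality across the sum over $i$ and to the offset term, would yield
\begin{equation*}
\lVert \bar q - \hat q_h\rVert_{L^1(\Omega)} \leq |\bar a - \hat a_h| + \sum_{i=1}^m\!\Bigl[ |\bar c^i - \hat c^i_{l,h} - \hat c^i_{r,h}| + |\hat c^i_{l,h}|\,|\bar x^i - \hat x^i_{l,h}| + |\hat c^i_{r,h}|\,|\bar x^i - \hat x^i_{r,h}| \Bigr].
\end{equation*}

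Next I would control the coefficients $|\hat c^i_{l,h}|$ and $|\hat c^i_{r,h}|$. Since $\hat q_h' = \sum_{i=1}^m(\hat c^i_{l,h}\delta_{\hat x^i_{l,h}} + \hat c^i_{r,h}\delta_{\hat x^i_{r,h}})$ and the jump points are pairwise distinct, we have $\sum_{i=1}^m(|\hat c^i_{l,h}| + |\hat c^i_{r,h}|) = \lVert \hat q_h'\rVert_{\M} \leq \lVert \hat q_h\rVert_{BV(\Omega)} \leq C$ uniformly in $h$ by \Cref{thm:existence_probch}. Thus each $|\hat c^i_{l,h}|,|\hat c^i_{r,h}|$ is bounded by a constant independent of $h$.

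Now I would finish by invoking \Cref{lem_estimateforthejumps2} to bound $\sum_{i=1}^m |\bar c^i - \hat c^i_{l,h} - \hat c^i_{r,h}| \leq Ch$ and \Cref{lem_estimatesforpartsofthecontrols_cd} to bound $|\bar a - \hat a_h| + \sum_{i=1}^m(|\bar x^i - \hat x^i_{l,h}| + |\bar x^i - \hat x^i_{r,h}|) \leq Ch$. Combining these yields the stated estimate $\lVert \bar q - \hat q_h\rVert_{L^1(\Omega)} \leq Ch$. I do not expect any significant obstacle here, since all the hard structural work has already been done in Lemmas \ref{lem_absvalofhatPhihssalpha}--\ref{lem_estimatesforpartsofthecontrols_cd}; the corollary is essentially a clean bookkeeping argument assembling those ingredients.
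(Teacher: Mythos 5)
Your proposal is correct and follows essentially the same route as the paper, whose proof simply combines \Cref{lem_estimateforthejumps2} and \Cref{lem_estimatesforpartsofthecontrols_cd}; the explicit splitting you write out is the same bookkeeping the paper displays for the variational case in \Cref{cor_estimatesforcontrols} and leaves implicit here. The only cosmetic difference is that you weight the jump-location terms by the discrete coefficients $|\hat c^i_{l,h}|,|\hat c^i_{r,h}|$ (justified by the uniform BV bound of \Cref{thm:existence_probch}) rather than by the continuous coefficients $|\bar c^i|$, which changes nothing.
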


\begin{proof}
	The desired estimate follows by combining \Cref{lem_estimateforthejumps2} and \Cref{lem_estimatesforpartsofthecontrols_cd}.
\end{proof}


\section{Numerical Experiments}\label{sec_numerics}

\newlength\fheight
\newlength\fwidth
In this section we introduce an algorithm to solve the optimization problems \eqref{prob_BVh} and \eqref{prob_BVch} 
based on the PDAP method described for example in \cite{PQTTW2018,Daniel2018}. Moreover, we discuss the error estimates for both discretization schemes on two numerical examples.

\subsection{Optimization algorithm for variational discretization}

We recall from \eqref{eq:barqhpdarstellung2} that there is a number $m\in\N$ such that the optimal control $\bar q_h$ for \eqref{prob_BVh} and its derivative can be expressed as $\bar q_h = \bar a_h+\sum_{i=1}^{m} \bar c^i_h 1_{(\bar x_h^i,1)}$, respectively, $\bar q_h' = \sum_{i=1}^{m} \bar c^i_h \delta_{\bar x_h^i}$ for suitable coefficients 
$\bar a_h$, $\bar c_h:=(\bar c_h^1,\ldots,\bar c_h^m)^T\in\R^{m}$ and points $\bar x_h^1,\ldots,\bar x_h^m\in\Omega=(0,1)$ that satisfy $\bar z_h(\bar x_h^i)=0$ for $0\leq i\leq m$. 
Let us assume for a moment that we know $\{\bar x_h^i\}_{i=1}^m$. We can then determine the coefficients $\bar a_h$ and $\bar c_h$ by solving the finite-dimensional, convex optimization problem
\begin{equation}\label{prob_semisemi2}
\min_{a_h\in\R,c_h\in\R^{m}} \frac12\Norm{u_h-u_d}{\LL}^2 + \alpha\sum_{i=1}^{m} |c_h^i|
\qquad\text{s.t.}\qquad
u_h = S_h\left( a_h+\sum_{i=1}^{m} c_h^i 1_{(\bar x_h^i,1)}\right).
\end{equation}
Since we do not know $\{\bar x_h^i\}_{i=1}^m$ beforehand, the algorithmic idea is to work with approximations of this set. 
We start with an approximation $\{t_{(0)}^i\}_{i=0}^{m_{(0)}}$ that satisfies $0<t^0_{(0)}<t^1_{(0)}<t^2_{(0)}<\ldots<t^{m_{(0)}}_{(0)}<1$.
Next we solve \eqref{prob_semisemi2} using $\{t_{(0)}^i\}_{i=1}^{m_{(0)}}$ instead of 
$\{\bar x_h^i\}_{i=1}^m$ by a semi-smooth Newton method, cf. \cite{Ulbrich}. 
Note that \eqref{prob_semisemi2} is a finite-dimensional problem of dimension $m_{(0)}+1$, independently of $h$. 
This yields $(q_h^{(0)},u_h^{(0)},z_h^{(0)})$. We compute the roots $\{t_{(1)}^i\}_{i=1}^{m_{(1)}}\subset(0,1)$
of $z_h^{(0)}$ and solve \eqref{prob_semisemi2} using $\{t_{(1)}^i\}_{i=1}^{m_{(1)}}$ instead of 
$\{\bar x_h^i\}_{i=1}^m$ to obtain $(q_h^{(1)},u_h^{(1)},z_h^{(1)})$. This process is iterated.
We call the step of the algorithm where the new estimate $\{t_{(k+1)}^i\}_{i=1}^{m_{(k+1)}}$ of $\{\bar x_h^i\}_{i=1}^m$ is obtained, the \emph{outer} iteration. 
The \emph{inner} iteration consists of solving \eqref{prob_semisemi2}.
The outer iteration and thereby the overall algorithm are terminated if an approximation $t_{(k)}:=(t_{(k)}^1,\ldots,t_{(k)}^{m_{(k)}})^T$, $k\geq 1$, is obtained that satisfies
\begin{equation}\label{eq_convreq2}\tag{T}
m_{(k)}=m_{(k-1)}  \qquad\text{and}\qquad
\|t_{(k)}-t_{(k-1)}\|_2\leq \epsilon_{\text{out}},
\end{equation}
where  
$\epsilon_{\text{out}}>0$ is some small tolerance, e.g., 
$\epsilon_{\text{out}}=10^{-10}$.

All in all, these considerations give rise to the following algorithm.

\begin{algorithm2e}[H]
	\DontPrintSemicolon
	\caption{Solving the semi-discrete problem \eqref{prob_BVh}}\label{alg_MPDAPz2}
	\KwIn{ $m_{(0)}\in\mathbb{N}\cup\{0\}$, $t_{(0)}\in\R^{m_{(0)}}$ and $\epsilon_{\text{in}},\epsilon_{\text{out}}>0$}
	\For(\tcp*[f]{outer iteration}){$k=0,1,2,\ldots$}
	{	
		\lIf{\eqref{eq_convreq2} holds}{let $m:=m_{(k)}$, $\bar x_h:=t_{(k)}$,
			 and extract $(\bar a_h,\bar c_h)$ from $q_h^{(k)}$; STOP \tcp*[f]{check termination criterion}}
		Obtain $(q_h^{(k)},u_h^{(k)},z_h^{(k)})$ by solving
		\eqref{prob_semisemi2} to tolerance $\epsilon_{\text{in}}$ 
		\tcp*[f]{inner iter.}\label{line_ssn} \\
		Compute the roots $t_{(k+1)}\in\R^{m_{(k+1)}}$ of $z_h^{(k)}$\tcp*[f]{next approximation}\label{line_rootsadjst2}
	}
	\KwOut{ $\bar x_h\in\R^m$, $(\bar a_h,\bar c_h)\in\R^{m+1}$}
\end{algorithm2e}

While it is theoretically possible that the inner iteration does not converge, we did not observe divergence in the numerical experiments that we carried out.
However, we did sometimes observe cycling of the outer iteration, e.g., $t_{(2k+2)}=t_{(2k)}$ and $t_{(2k+3)}=t_{(2k+1)}$ for all $k$ sufficiently large. 
Since this did only occur for iterates with an equal number of roots of the adjoint state, the following modification of line~\ref{line_rootsadjst2} was possible and turned out to be sufficient: Compute the roots $t_{(k+1)}$ in line~\ref{line_rootsadjst2}, and if
$\|t_{(k+1)}-t_{(k)}\|_2\geq\|t_{(k)}-t_{(k-1)}\|_2$,
then use $0.5 t_{(k)}+0.5 t_{(k+1)}$ instead of $t_{(k+1)}$ as new approximation of $\{\bar x_h^i\}_{i=1}^m$.

For the numerical computations we use 
\begin{equation}\label{eq_paramchoice2}
a_h^{(0)} := 0, \quad
m_{(0)}:= 0, \quad t_{(0)}:=\{ \}, 
\quad \epsilon_{\text{out}}:=10^{-10},
\quad\text{and}\quad
\epsilon_{\text{in}}:=10^{-12}.
\end{equation}
We stress that our intent is to display the order of convergence, hence the parameter choices are made in such a way that the computed solutions are highly accurate.

\subsection{Optimization algorithm for full discretization}

The algorithm that we use to solve \eqref{prob_BVch} is very similar to Algorithm~\ref{alg_MPDAPz2}. In fact, there are only two differences: The approximating points $\{t_{(k)}^i\}_{i=1}^{m_{(k)}}$ have to be gridpoints and, in view of our theoretical findings from \Cref{cor:fullydiscjumps}, we may add two gridpoints for every root of $z_h^{(k)}$. To meet these demands we first compute the roots of $z_h^{(k)}$ in the same way as in Algorithm~\ref{alg_MPDAPz2}. Subsequently, every root is replaced by the two gridpoints adjacent to that root, except if a root happens to be on a gridpoint, in which case only that gridpoint is used. This is in agreement with \Cref{cor:fullydiscjumps}. Indeed, if a gridpoint
is added at which no jump occurs, then the inner iteration accounts for this by yielding zero for the corresponding coefficient (recall the representation \eqref{eq:barqhpdarstellung1}).
Since these are the only changes in Algorithm~\ref{alg_MPDAPz2}, we do not state the resulting algorithm. In the numerical experiments we use the same set of parameters as for Algorithm~\ref{alg_MPDAPz2}, cf.~\eqref{eq_paramchoice2}.


\subsection{Example 1: Known Solution} \label{sec:academicaexample}

We construct an example by defining the following quantities:
\begin{itemize}
	\item $c := 12-4\sqrt 8$, $x_c := \frac{1}{2\pi} \arccos(\frac{c}{4})$;
	\item $\alpha := 10^{-5}$;
	\item $\bar q := 0.5 + 1_{(x_c,1)} - 2 \cdot 1_{(0.5, 1)} + 1.5 \cdot  1_{(1-x_c,1)}$;
	\item $\bar u := S(\bar q)$, $a(x) := 1$, $d_0(x) := 0$;
	\item $\bar \Phi(x) := \frac{\alpha}{2c} \left[ (1-\cos(4\pi x)) - c(1-\cos(2\pi x)) \right]$ (a linear combination of a wave with two positive peaks and one negative peak; the peaks are not equidistant throughout $\Omega$, but symmetrical to $0.5$);
	\item $\bar z := \bar \Phi^\prime$;
	\item $u_d := \bar u + \bar z^{\prime\prime}$.
\end{itemize}
It is straightforward to check that these quantities satisfy the conditions from \Cref{thm_optcond}. In particular, given this $\alpha$ and this $u_d$ the exact solution to \eqref{prob:BV} is $\bar q$. 
The approximated solutions to this problem are depicted in \Cref{fig_ex2_plots}.

\setlength\fheight{4cm}
\setlength\fwidth{5.25cm}
\begin{figure}[t]
	\centering
	\begin{subfigure}[t]{0.495\linewidth}
		\centering
%
\begin{tikzpicture}

\begin{axis}[%
width=0.951\fwidth,
height=\fheight,
at={(0\fwidth,0\fheight)},
scale only axis,
xmin=0,
xmax=1,
xmajorgrids,
ymin=0.0,
ymax=3,
ymajorgrids,
axis background/.style={fill=white},
title style={font=\bfseries},
title={Control},
legend style={legend cell align=left,align=left,draw=white!15!black}
]
\addplot [color=black!35!blue,solid,line width=2.0pt,forget plot]
  table[row sep=crcr]{%
0	0.500000009196998\\
0.222557550058686	0.500000009196998\\
};
\addplot [color=black!35!blue,solid,line width=2.0pt,forget plot]
  table[row sep=crcr]{%
0.222557550058686	1.9999999997785\\
0.499999999677575	1.9999999997785\\
};
\addplot [color=black!35!blue,solid,line width=2.0pt,forget plot]
  table[row sep=crcr]{%
0.499999999677575	1.50000001881792\\
0.77744244959869	1.50000001881792\\
};
\addplot [color=black!35!blue,solid,line width=2.0pt,forget plot]
  table[row sep=crcr]{%
0.77744244959869	2.5000000216491\\
1	2.5000000216491\\
};
\end{axis}
\end{tikzpicture}%
		\caption{$\bar q_h$}\label{fig_ex2_q_varD}
	\end{subfigure}
	\hfill
	\begin{subfigure}[t]{0.495\linewidth}
		\centering
		\input{ex2_z_262144_varD}
		\caption{$\bar z_h$}\label{fig_ex2_z_varD}
	\end{subfigure}
	
	\bigskip
	
	\begin{subfigure}[t]{0.495\linewidth}
		\centering
		\input{ex2_u_262144_varD}
		\caption{$\bar u_h$}\label{fig_ex2_u_varD}
	\end{subfigure}
	\hfill
	\begin{subfigure}[t]{0.495\linewidth}
		\centering
		\input{ex2_Phi_262144_varD}
		\caption{$\bar\Phi_h$}\label{fig_ex2_Phi_varD}
	\end{subfigure}
	\caption{Example 1: The semi-discrete solution to the data from \Cref{sec:academicaexample}. The discretization parameter $h$ is roughly $3.8\cdot 10^{-6}$. The inclusions provided in \Cref{cor:discretesupportcondition} are clearly visible.}
	\label{fig_ex2_plots}
\end{figure}

\setlength\fheight{8.5cm}
\setlength\fwidth{13cm}

\Cref{fig_ex2_errs_varD} displays the errors between solutions to the original problem \eqref{prob:BV} and solutions to the variationally discretized problem \eqref{prob_BVh}. We observe that the error estimates of \Cref{thm_errorestimatestatesorderh2} and \Cref{cor_errorestimatestatesorderh2} are indeed sharp. In addition, the $L^2(\Omega)$-error of the controls is \emph{not} of order $h^2$, showing that the derived error estimates for the control are not satisfied for the $L^2(\Omega)$-norm.  
We remark that an error estimate of order $\mathcal{O}(h)$ for the controls with respect to the $\LL$-norm follows easily from \Cref{cor_errorestimatestatesorderh2}.

In \Cref{fig_ex2_errs} we compare the solutions of the fully discretized problem \eqref{prob_BVch} to the solutions of the original problem. Again we find the error estimates from \Cref{prop:suboptimalstateerrorch}, \Cref{prop:adjointconv1} 
and \Cref{cor_errorestimatestatesorderh_impr} to be sharp and the $L^2(\Omega)$-error of the controls to be of lower order than the $L^1(\Omega)$-error. Correspondingly, it is straightforward to deduce an error estimate in $\LL$ of order ${\mathcal O}(h^{\frac12})$ for the controls.
The slightly erratic behavior of the errors can be explained by the fact that on some grids the locations of the jumps of the continuous optimal control $\bar q$ are better resolved by the gridpoints than on others; we stress that the grids are \emph{not} nested.

\begin{figure}[t]
	\centering
	\scalebox{0.85}{ 
%
\definecolor{mycolor1}{rgb}{0.47000,0.53000,0.60000}%
\definecolor{mycolor2}{rgb}{0.69000,0.77000,0.87000}%
\begin{tikzpicture}

\begin{axis}[%
width=0.951\fwidth,
height=\fheight,
at={(0\fwidth,0\fheight)},
scale only axis,
xmode=log,
xmin=6.1038881767686e-05,
xmax=0.0666666666666667,
xminorticks=true,
xlabel={h},
xmajorgrids,
xminorgrids,
ymode=log,
ymin=4.47089410493946e-12,
ymax=0.439472272371565,
yminorticks=true,
ymajorgrids,
yminorgrids,
axis background/.style={fill=white},
legend style={at={(0.97,0.03)},anchor=south east,legend cell align=left,align=left,draw=white!15!black}
]
\addplot [color=black!35!blue,solid,line width=1.0pt,mark=*,mark options={solid,fill=black!35!blue,draw=black}]
  table[row sep=crcr]{%
0.0666666666666667	0.416577895599417\\
0.032258064516129	0.0872055934275276\\
0.0158730158730159	0.0201032126637091\\
0.0078740157480315	0.00538157586987273\\
0.00392156862745098	0.00132144592229385\\
0.00195694716242661	0.000331226615169428\\
0.000977517106549365	8.3559317588293e-05\\
0.000488519785051295	2.11526565664677e-05\\
0.000244200244200244	5.25729076383021e-06\\
0.000122085215480405	1.17320498710111e-06\\
6.1038881767686e-05	3.11410102584297e-07\\
};
\addlegendentry{$\|\bar q-\bar q_h\|_1$};

\addplot [color=mycolor1,solid,line width=1.0pt,mark=square*,mark options={solid,fill=mycolor1,draw=black}]
  table[row sep=crcr]{%
0.0666666666666667	0.439472272371565\\
0.032258064516129	0.110311012975148\\
0.0158730158730159	0.039782220693191\\
0.0078740157480315	0.0180402310616587\\
0.00392156862745098	0.00865431475044991\\
0.00195694716242661	0.00429111896601839\\
0.000977517106549365	0.00214575278366303\\
0.000488519785051295	0.00107579911669172\\
0.000244200244200244	0.000537039925718403\\
0.000122085215480405	0.000259825749021082\\
6.1038881767686e-05	0.000132776117712211\\
};
\addlegendentry{$\|\bar q-\bar q_h\|_2$};

\addplot [color=black!45!red,solid,line width=1.0pt,mark=diamond*,mark options={solid,fill=black!45!red,draw=black}]
  table[row sep=crcr]{%
0.0666666666666667	0.00232292513209488\\
0.032258064516129	0.000528990207837366\\
0.0158730158730159	0.000123816021484203\\
0.0078740157480315	3.44704734399576e-05\\
0.00392156862745098	8.46805426068389e-06\\
0.00195694716242661	2.12897803532405e-06\\
0.000977517106549365	5.39208529690376e-07\\
0.000488519785051295	1.37124509318936e-07\\
0.000244200244200244	3.40207996927957e-08\\
0.000122085215480405	7.29994667401848e-09\\
6.1038881767686e-05	1.97847115866685e-09\\
};
\addlegendentry{$\|\bar u-\bar u_h\|_2$};

\addplot [color=black!45!green,solid,line width=1.0pt,mark=triangle*,mark options={solid,fill=black!45!green,draw=black}]
  table[row sep=crcr]{%
0.0666666666666667	1.85042464580715e-05\\
0.032258064516129	4.55258994263345e-06\\
0.0158730158730159	1.08713882118806e-06\\
0.0078740157480315	3.01735210649152e-07\\
0.00392156862745098	7.36690479861588e-08\\
0.00195694716242661	1.85397737957003e-08\\
0.000977517106549365	4.69392752866103e-09\\
0.000488519785051295	1.19359167776551e-09\\
0.000244200244200244	2.93272241195142e-10\\
0.000122085215480405	5.31378652695149e-11\\
6.1038881767686e-05	1.71013260383225e-11\\
};
\addlegendentry{$\|\bar z-\bar z_h\|_\infty$};

\addplot [color=mycolor2,dashed,line width=1.0pt]
  table[row sep=crcr]{%
6.1038881767686e-05	5.58861763117433e-08\\
0.0666666666666667	0.0666666666666667\\
};
\addlegendentry{${{\mathcal O}}(h^2)$};

\addplot [color=mycolor2,dashed,line width=1.0pt,forget plot]
  table[row sep=crcr]{%
6.1038881767686e-05	4.47089410493946e-10\\
0.0666666666666667	0.000533333333333333\\
};
\addplot [color=mycolor2,dashed,line width=1.0pt,forget plot]
  table[row sep=crcr]{%
6.1038881767686e-05	4.47089410493946e-12\\
0.0666666666666667	5.33333333333333e-06\\
};
\end{axis}
\end{tikzpicture}
	\caption{Example 1: Convergence plots of the errors of the solutions to the semi-discrete problem \eqref{prob_BVh} compared to the exact solution. The exact solution is known.}
	\label{fig_ex2_errs_varD}
\end{figure}

\begin{figure}[t]
	\centering
	\scalebox{0.85}{ 
%
\definecolor{mycolor1}{rgb}{0.47000,0.53000,0.60000}%
\definecolor{mycolor2}{rgb}{0.69000,0.77000,0.87000}%
\begin{tikzpicture}

\begin{axis}[%
width=0.951\fwidth,
height=\fheight,
at={(0\fwidth,0\fheight)},
scale only axis,
xmode=log,
xmin=3.81471181759574e-06,
xmax=0.0666666666666667,
xminorticks=true,
xlabel={h},
xmajorgrids,
xminorgrids,
ymode=log,
ymin=3.59642505307626e-11,
ymax=0.506439467901868,
yminorticks=true,
ymajorgrids,
yminorgrids,
axis background/.style={fill=white},
legend style={at={(0.97,0.03)},anchor=south east,legend cell align=left,align=left,draw=white!15!black}
]
\addplot [color=black!35!blue,solid,line width=1.0pt,mark=*,mark options={solid,fill=black!35!blue,draw=black}]
  table[row sep=crcr]{%
0.0666666666666667	0.443670677923999\\
0.032258064516129	0.120401369001373\\
0.0158730158730159	0.035002833112647\\
0.0078740157480315	0.0204660740453858\\
0.00392156862745098	0.0102755841472339\\
0.00195694716242661	0.00541532951504797\\
0.000977517106549365	0.00301445792483316\\
0.000488519785051295	0.00182141816045074\\
0.000244200244200244	0.00122675842162951\\
0.000122085215480405	0.000146602915511212\\
6.1038881767686e-05	0.000389983566323224\\
3.05185094759972e-05	0.000119994755337618\\
1.52590218966964e-05	8.29318443921854e-05\\
7.62945273935501e-06	1.54324099092592e-05\\
3.81471181759574e-06	6.16156679467323e-06\\
};
\addlegendentry{$\|\bar q-\hat q_h\|_1$};

\addplot [color=mycolor1,solid,line width=1.0pt,mark=square*,mark options={solid,fill=mycolor1,draw=black}]
  table[row sep=crcr]{%
0.0666666666666667	0.506439467901868\\
0.032258064516129	0.22862733197568\\
0.0158730158730159	0.132513921189624\\
0.0078740157480315	0.120908354098617\\
0.00392156862745098	0.0842292968130032\\
0.00195694716242661	0.0608310318806978\\
0.000977517106549365	0.0448314723871757\\
0.000488519785051295	0.034151586843198\\
0.000244200244200244	0.027304400332061\\
0.000122085215480405	0.0115945630292224\\
6.1038881767686e-05	0.015144909695937\\
3.05185094759972e-05	0.00872245473403043\\
1.52590218966964e-05	0.0070579549117288\\
7.62945273935501e-06	0.00340386069031463\\
3.81471181759574e-06	0.00223753739089957\\
};
\addlegendentry{$\|\bar q-\hat q_h\|_2$};

\addplot [color=black!45!red,solid,line width=1.0pt,mark=diamond*,mark options={solid,fill=black!45!red,draw=black}]
  table[row sep=crcr]{%
0.0666666666666667	0.00214671533240913\\
0.032258064516129	0.000510195520989621\\
0.0158730158730159	0.000122393442828323\\
0.0078740157480315	4.39399893954513e-05\\
0.00392156862745098	1.51111166771188e-05\\
0.00195694716242661	7.51821716749298e-06\\
0.000977517106549365	4.36353832401294e-06\\
0.000488519785051295	2.85715512270395e-06\\
0.000244200244200244	2.10946271557849e-06\\
0.000122085215480405	5.244188306296e-08\\
6.1038881767686e-05	7.06662467866952e-07\\
3.05185094759972e-05	1.922017865198e-07\\
1.52590218966964e-05	1.45387420116688e-07\\
7.62945273935501e-06	1.67679153177858e-08\\
3.81471181759574e-06	5.11658554736582e-09\\
};
\addlegendentry{$\|\bar u-\hat u_h\|_2$};

\addplot [color=black!45!green,solid,line width=1.0pt,mark=triangle*,mark options={solid,fill=black!45!green,draw=black}]
  table[row sep=crcr]{%
0.0666666666666667	1.738928676756e-05\\
0.032258064516129	4.48651559187464e-06\\
0.0158730158730159	1.05066050205553e-06\\
0.0078740157480315	2.99371774385269e-07\\
0.00392156862745098	9.07654124092071e-08\\
0.00195694716242661	4.79184791549297e-08\\
0.000977517106549365	2.7792154456722e-08\\
0.000488519785051295	1.80574928740688e-08\\
0.000244200244200244	1.32731664838513e-08\\
0.000122085215480405	3.36334646695789e-10\\
6.1038881767686e-05	4.43972483500001e-09\\
3.05185094759972e-05	1.20869424209522e-09\\
1.52590218966964e-05	9.15278312052028e-10\\
7.62945273935501e-06	1.07311440070203e-10\\
3.81471181759574e-06	3.59642505307626e-11\\
};
\addlegendentry{$\|\bar z-\hat z_h\|_\infty$};

\addplot [color=mycolor2,dashed,line width=1.0pt]
  table[row sep=crcr]{%
3.81471181759574e-06	9.53677954398935e-06\\
0.0666666666666667	0.166666666666667\\
};
\addlegendentry{${{\mathcal O}}(h)$};

\addplot [color=mycolor2,dashed,line width=1.0pt,forget plot]
  table[row sep=crcr]{%
3.81471181759574e-06	3.05176945407659e-08\\
0.0666666666666667	0.000533333333333333\\
};
\addplot [color=mycolor2,dashed,line width=1.0pt,forget plot]
  table[row sep=crcr]{%
3.81471181759574e-06	3.05176945407659e-10\\
0.0666666666666667	5.33333333333333e-06\\
};
\end{axis}
\end{tikzpicture}
	\caption{Example 1: Convergence plots of the errors of the solutions to the fully discrete problem \eqref{prob_BVch} compared to the exact solution. The exact solution is known.}
	\label{fig_ex2_errs}
\end{figure}

\subsection{Example 2: Unknown Solution} \label{sec:naturalexample}

We consider $\alpha := 10^{-5}$ and $u_d(x) := 0.5\pi^{-2} (1-\cos(2\pi x))$. An approximate solution to \eqref{prob:BV} is shown in \Cref{fig_ex3_plots}.

\setlength\fheight{4cm}
\setlength\fwidth{5.25cm}
\begin{figure}[t]
	\centering
	\begin{subfigure}[t]{0.495\linewidth}
		\centering
%
\begin{tikzpicture}

\begin{axis}[%
width=0.951\fwidth,
height=\fheight,
at={(0\fwidth,0\fheight)},
scale only axis,
xmin=0,
xmax=1,
xmajorgrids,
ymin=-1,
ymax=1.5,
ymajorgrids,
axis background/.style={fill=white},
title style={font=\bfseries},
title={Control}
]
\addplot [color=black!35!blue,solid,line width=2.0pt,forget plot]
  table[row sep=crcr]{%
0	-0.58747362502274\\
0.291511545096475	-0.58747362502274\\
};
\addplot [color=black!35!blue,solid,line width=2.0pt,forget plot]
  table[row sep=crcr]{%
0.291511545096475	0.881173376024765\\
0.708488457217153	0.881173376024765\\
};
\addplot [color=black!35!blue,solid,line width=2.0pt,forget plot]
  table[row sep=crcr]{%
0.708488457217153	0.29369976476994\\
1	0.29369976476994\\
};
\end{axis}
\end{tikzpicture}%
		\caption{$\bar q_h$}\label{fig_ex3_q_varD}
	\end{subfigure}
	\hfill
	\begin{subfigure}[t]{0.495\linewidth}
		\centering
		\input{ex3_z_262144_varD}
		\caption{$\bar z_h$}\label{fig_ex3_z_varD}
	\end{subfigure}
	
	\bigskip
	
	\begin{subfigure}[t]{0.495\linewidth}
		\centering
		\input{ex3_u_262144_varD}
		\caption{$\bar u_h$}\label{fig_ex3_u_varD}
	\end{subfigure}
	\hfill
	\begin{subfigure}[t]{0.495\linewidth}
		\centering
		\input{ex3_Phi_262144_varD}
		\caption{$\bar\Phi_h$}\label{fig_ex3_Phi_varD}
	\end{subfigure}
	\caption{Example 2: The variationally discrete solution to the data from \Cref{sec:naturalexample}. The discretization parameter $h$ is roughly $3.8\cdot 10^{-6}$. The inclusions provided in \Cref{cor:discretesupportcondition} are clearly visible.}
	\label{fig_ex3_plots}
\end{figure}

First we turn to the variationally discrete problem. As we do not have a known solution, we compute a reference solution $(\bar u_{h_{\text{ref}}}, \bar q_{h_{\text{ref}}}, \bar z_{h_{\text{ref}}}, \bar \Phi_{h_{\text{ref}}})$ on a fine grid, more specifically $h_{\text{ref}} \approx 9.5\cdot 10^{-7}$, and approximate the errors via $\lVert \bar q_h - \bar q \rVert_{L^1(\Omega)}\approx \lVert \bar q_h - \bar q_{h_{\text{ref}}} \rVert_{L^1(\Omega)}$. The same is done for the states and the adjoint states. \Cref{fig_ex3_errs_varD} displays the approximated errors. As in Example~1 we observe that the rates from \Cref{thm_errorestimatestatesorderh2} and \Cref{cor_errorestimatestatesorderh2} are sharp and that the $L^2(\Omega)$-error of the control is of lower order than the $L^1(\Omega)$-error.

\setlength\fheight{8.5cm}
\setlength\fwidth{13cm}

\begin{figure}[t]
	\centering
	\scalebox{0.85}{ 
%
\definecolor{mycolor1}{rgb}{0.47000,0.53000,0.60000}%
\definecolor{mycolor2}{rgb}{0.69000,0.77000,0.87000}%
\begin{tikzpicture}

\begin{axis}[%
width=0.951\fwidth,
height=\fheight,
at={(0\fwidth,0\fheight)},
scale only axis,
xmode=log,
xmin=6.1038881767686e-05,
xmax=0.0666666666666667,
xminorticks=true,
xlabel={h},
xmajorgrids,
xminorgrids,
ymode=log,
ymin=7.82406468364406e-12,
ymax=0.186357914226829,
yminorticks=true,
ymajorgrids,
yminorgrids,
axis background/.style={fill=white},
legend style={at={(0.97,0.03)},anchor=south east,legend cell align=left,align=left,draw=white!15!black}
]
\addplot [color=black!35!blue,solid,line width=1.0pt,mark=*,mark options={solid,fill=black!35!blue,draw=black}]
  table[row sep=crcr]{%
0.0666666666666667	0.0425429873415083\\
0.032258064516129	0.00972667090893375\\
0.0158730158730159	0.0021144830928374\\
0.0078740157480315	0.000597759718612861\\
0.00392156862745098	0.000127223261615493\\
0.00195694716242661	3.62497518635097e-05\\
0.000977517106549365	8.05728638321459e-06\\
0.000488519785051295	1.98049522529154e-06\\
0.000244200244200244	4.95061723783385e-07\\
0.000122085215480405	1.22806926753047e-07\\
6.1038881767686e-05	2.95117391286834e-08\\
};
\addlegendentry{$\|\bar q-\bar q_h\|_1$};

\addplot [color=mycolor1,solid,line width=1.0pt,mark=square*,mark options={solid,fill=mycolor1,draw=black}]
  table[row sep=crcr]{%
0.0666666666666667	0.186357914226829\\
0.032258064516129	0.0898531452818644\\
0.0158730158730159	0.0428943888521211\\
0.0078740157480315	0.0215866134970462\\
0.00392156862745098	0.010523842582048\\
0.00195694716242661	0.00534593222285767\\
0.000977517106549365	0.00263513675067126\\
0.000488519785051295	0.00131239271761214\\
0.000244200244200244	0.000660251365039529\\
0.000122085215480405	0.00033733022341685\\
6.1038881767686e-05	0.000182216782052868\\
};
\addlegendentry{$\|\bar q-\bar q_h\|_2$};

\addplot [color=black!45!red,solid,line width=1.0pt,mark=diamond*,mark options={solid,fill=black!45!red,draw=black}]
  table[row sep=crcr]{%
0.0666666666666667	0.000964842767655226\\
0.032258064516129	0.000208196500544665\\
0.0158730158730159	5.60181290019243e-05\\
0.0078740157480315	1.24307702896955e-05\\
0.00392156862745098	3.44546319594136e-06\\
0.00195694716242661	7.74342875151469e-07\\
0.000977517106549365	2.08254258624046e-07\\
0.000488519785051295	5.29454222275903e-08\\
0.000244200244200244	1.31762638809924e-08\\
0.000122085215480405	3.26170597498923e-09\\
6.1038881767686e-05	7.98985381704232e-10\\
};
\addlegendentry{$\|\bar u-\bar u_h\|_2$};

\addplot [color=black!45!green,solid,line width=1.0pt,mark=triangle*,mark options={solid,fill=black!45!green,draw=black}]
  table[row sep=crcr]{%
0.0666666666666667	1.11424013529959e-05\\
0.032258064516129	2.30268300542183e-06\\
0.0158730158730159	6.0424163240567e-07\\
0.0078740157480315	1.31937470343381e-07\\
0.00392156862745098	3.64097052690644e-08\\
0.00195694716242661	8.15309945787255e-09\\
0.000977517106549365	2.19577846955133e-09\\
0.000488519785051295	5.57486132479962e-10\\
0.000244200244200244	1.3878665447789e-10\\
0.000122085215480405	3.43669113281905e-11\\
6.1038881767686e-05	8.29703521192842e-12\\
};
\addlegendentry{$\|\bar z-\bar z_h\|_\infty$};

\addplot [color=mycolor2,dashed,line width=1.0pt]
  table[row sep=crcr]{%
6.1038881767686e-05	5.58861763117433e-08\\
0.0666666666666667	0.0666666666666667\\
};
\addlegendentry{${{\mathcal O}}(h^2)$};

\addplot [color=mycolor2,dashed,line width=1.0pt,forget plot]
  table[row sep=crcr]{%
6.1038881767686e-05	5.58861763117433e-10\\
0.0666666666666667	0.000666666666666667\\
};
\addplot [color=mycolor2,dashed,line width=1.0pt,forget plot]
  table[row sep=crcr]{%
6.1038881767686e-05	7.82406468364406e-12\\
0.0666666666666667	9.33333333333333e-06\\
};
\end{axis}
\end{tikzpicture}
	\caption{Example 2: Convergence plots of the errors of the solutions to the semi-discrete problem \eqref{prob_BVh} compared to an approximation of the exact solution. The reference solution is computed as solution to \eqref{prob_BVh} with $h_{\text{ref}}\approx 3.8\cdot 10^{-6}$.}
	\label{fig_ex3_errs_varD}
\end{figure}

The same procedure is applied to the fully discrete problem, and the results are depicted in \Cref{fig_ex3_errs}. Once again the proven rates turn out to be sharp and the $L^2(\Omega)$-rate is of lower order than the $L^1(\Omega)$-rate. 

\begin{figure}[t]
	\centering
	\scalebox{0.85}{ 
%
\definecolor{mycolor1}{rgb}{0.47000,0.53000,0.60000}%
\definecolor{mycolor2}{rgb}{0.69000,0.77000,0.87000}%
\begin{tikzpicture}

\begin{axis}[%
width=0.951\fwidth,
height=\fheight,
at={(0\fwidth,0\fheight)},
scale only axis,
xmode=log,
xmin=3.81471181759574e-06,
xmax=0.0666666666666667,
xminorticks=true,
xlabel={h},
xmajorgrids,
xminorgrids,
ymode=log,
ymin=2.27876094560555e-10,
ymax=0.395241867836608,
yminorticks=true,
ymajorgrids,
yminorgrids,
axis background/.style={fill=white},
legend style={at={(0.97,0.03)},anchor=south east,legend cell align=left,align=left,draw=white!15!black}
]
\addplot [color=black!35!blue,solid,line width=1.0pt,mark=*,mark options={solid,fill=black!35!blue,draw=black}]
  table[row sep=crcr]{%
0.0666666666666667	0.176403605542882\\
0.032258064516129	0.0117074784693991\\
0.0158730158730159	0.0497009327434055\\
0.0078740157480315	0.00158925111517806\\
0.00392156862745098	0.0123494307519405\\
0.00195694716242661	0.000694827467855047\\
0.000977517106549365	0.00199114109659443\\
0.000488519785051295	0.0012686150928192\\
0.000244200244200244	0.000597888422024386\\
0.000122085215480405	0.000262591908036826\\
6.1038881767686e-05	9.49603409074374e-05\\
3.05185094759972e-05	1.11488361240222e-05\\
1.52590218966964e-05	3.07553462206679e-05\\
7.62945273935501e-06	2.01665038695403e-05\\
3.81471181759574e-06	9.69847719249151e-06\\
};
\addlegendentry{$\|\bar q-\hat q_h\|_1$};

\addplot [color=mycolor1,solid,line width=1.0pt,mark=square*,mark options={solid,fill=mycolor1,draw=black}]
  table[row sep=crcr]{%
0.0666666666666667	0.395241867836608\\
0.032258064516129	0.100022302812705\\
0.0158730158730159	0.207532293743891\\
0.0078740157480315	0.0382380168744604\\
0.00392156862745098	0.105375159121725\\
0.00195694716242661	0.024930724615539\\
0.000977517106549365	0.0422831901483732\\
0.000488519785051295	0.0337437878152851\\
0.000244200244200244	0.02316722718401\\
0.000122085215480405	0.015355726709019\\
6.1038881767686e-05	0.00923835334556517\\
3.05185094759972e-05	0.00318183252600583\\
1.52590218966964e-05	0.00524276243116127\\
7.62945273935501e-06	0.00426818678899635\\
3.81471181759574e-06	0.00296898449806851\\
};
\addlegendentry{$\|\bar q-\hat q_h\|_2$};

\addplot [color=black!45!red,solid,line width=1.0pt,mark=diamond*,mark options={solid,fill=black!45!red,draw=black}]
  table[row sep=crcr]{%
0.0666666666666667	0.00100613458696653\\
0.032258064516129	0.000191856475296835\\
0.0158730158730159	0.000142282240031746\\
0.0078740157480315	1.21123038869394e-05\\
0.00392156862745098	3.77668187754961e-05\\
0.00195694716242661	2.3356022938396e-06\\
0.000977517106549365	6.08831267517538e-06\\
0.000488519785051295	3.9005672057342e-06\\
0.000244200244200244	1.83838238976472e-06\\
0.000122085215480405	8.08793211686803e-07\\
6.1038881767686e-05	2.94373272461274e-07\\
3.05185094759972e-05	3.73713433464521e-08\\
1.52590218966964e-05	9.13692866481473e-08\\
7.62945273935501e-06	6.4945629418375e-08\\
3.81471181759574e-06	3.29652881373877e-08\\
};
\addlegendentry{$\|\bar u-\hat u_h\|_2$};

\addplot [color=black!45!green,solid,line width=1.0pt,mark=triangle*,mark options={solid,fill=black!45!green,draw=black}]
  table[row sep=crcr]{%
0.0666666666666667	1.1227754603964e-05\\
0.032258064516129	2.10835066465885e-06\\
0.0158730158730159	1.1931718141091e-06\\
0.0078740157480315	1.31058144993675e-07\\
0.00392156862745098	2.85508900497658e-07\\
0.00195694716242661	1.47735656710123e-08\\
0.000977517106549365	4.4051161427987e-08\\
0.000488519785051295	2.73389765825741e-08\\
0.000244200244200244	1.28980886484064e-08\\
0.000122085215480405	5.6754197049556e-09\\
6.1038881767686e-05	2.06353416784886e-09\\
3.05185094759972e-05	2.57523547209322e-10\\
1.52590218966964e-05	6.54716969139315e-10\\
7.62945273935501e-06	4.51961421531942e-10\\
3.81471181759574e-06	2.27876094560555e-10\\
};
\addlegendentry{$\|\bar z-\hat z_h\|_\infty$};

\addplot [color=mycolor2,dashed,line width=1.0pt]
  table[row sep=crcr]{%
3.81471181759574e-06	7.62942363519148e-06\\
0.0666666666666667	0.133333333333333\\
};
\addlegendentry{${{\mathcal O}}(h)$};

\addplot [color=mycolor2,dashed,line width=1.0pt,forget plot]
  table[row sep=crcr]{%
3.81471181759574e-06	3.05176945407659e-08\\
0.0666666666666667	0.000533333333333333\\
};
\addplot [color=mycolor2,dashed,line width=1.0pt,forget plot]
  table[row sep=crcr]{%
3.81471181759574e-06	4.57765418111489e-10\\
0.0666666666666667	8e-06\\
};
\end{axis}
\end{tikzpicture}
	\caption{Example 2: Convergence plots of the errors of the solutions to the fully discrete problem \eqref{prob_BVch} compared to an approximation of the exact solution. The reference solution is computed as solution to \eqref{prob_BVch} with $h_{\text{ref}}\approx 2.4\cdot 10^{-7}$.}
	\label{fig_ex3_errs}	
\end{figure}


\subsection{Acknowledgments}

Dominik Hafemeyer acknowledges support from the graduate program TopMath of the Elite Network of Bavaria and the TopMath Graduate Center of TUM Graduate School at Technische Universität München. He is a scholar of the Studienstiftung des deutschen Volkes. Dominik Hafemeyer and Florian Mannel receive support from the IGDK Munich-Graz.


\FloatBarrier

\bibliographystyle{AIMS}

\providecommand{\href}[2]{#2}
\providecommand{\arxiv}[1]{\href{http://arxiv.org/abs/#1}{arXiv:#1}}
\providecommand{\url}[1]{\texttt{#1}}
\providecommand{\urlprefix}{URL }

\medskip
Received xxxx 20xx; revised xxxx 20xx.
\medskip

\end{document}